\swapnumbers \theoremstyle{definition}
\newtheorem{s}{}[subsection]
\newtheorem{rmk}[s]{Remark}
\theoremstyle{plain}
\newtheorem{thm}[s]{Theorem}
\newtheorem{prop}[s]{Proposition}
\newtheorem{lem}[s]{Lemma}
\newtheorem{sublem}[s]{Sublemma}
\newtheorem{cor}[s]{Corollary}
\newcommand{\frakg}{{\mathfrak g}}
\newcommand{\frakl}{{\mathfrak l}}
\newcommand{\frakm}{{\mathfrak m}}
\newcommand{\fraks}{{\mathfrak s}}
\newcommand{\frakt}{{\mathfrak t}}
\newcommand{\A}{{\mathbb A}}
\newcommand{\C}{{\mathbb C}}
\newcommand{\G}{{\mathbb G}}
\newcommand{\bbP}{{\mathbb P}}
\newcommand{\bbV}{{\mathbb V}}
\newcommand{\Z}{{\mathbb Z}}
\newcommand{\F}{{\mathcal F}}
\newcommand{\I}{{\mathcal I}}
\newcommand{\J}{{\mathcal J}}
\newcommand{\K}{{\mathcal K}}
\newcommand{\calL}{{\mathcal L}}
\newcommand{\calO}{{\mathcal O}}
\newcommand{\calP}{{\mathcal P}}
\newcommand{\V}{{\mathcal V}}
\newcommand{\Gr}{{\mathrm{Gr}}}
\newcommand{\Hom}{{\mathrm{Hom}}}
\newcommand{\ind}{{\mathrm{Ind}}}
\newcommand{\pr}{{\mathrm{pr}}}
\newcommand{\res}{{\mathrm{Res}}}
\newcommand{\spec}{{\mathrm{Spec}}}
\title[Affine Demazure modules and $T$-fixed point subschemes]{\bf Affine Demazure modules and $T$-fixed point subschemes in the affine Grassmannian}
\address{Department of Mathematics, University of California at Berkeley, CA 94720}\email{xinwenz@math.berkeley.edu}
\author{Xinwen Zhu}
\date{October 2007; revised November 2008}
\begin{document}
\begin{abstract}
Let $G$ be a simple algebraic group defined over $\C$ and $T$ be a
maximal torus of $G$. For a dominant coweight $\lambda$ of $G$,
the $T$-fixed point subscheme $(\overline{\Gr}_G^\lambda)^T$ of
the Schubert variety $\overline{\Gr}_G^\lambda$ in the affine
Grassmannian $\Gr_G$ is a finite scheme. We prove that for all
such $\lambda$ if $G$ is of type $A$ or $D$ and for many of them
if $G$ is of type $E$, there is a natural isomorphism between the
dual of the level one affine Demazure module corresponding to
$\lambda$ and the ring of functions (twisted by certain line
bundle on $\Gr_G$) of $(\overline{\Gr}_G^\lambda)^T$. We use this
fact to give a geometrical proof of the Frenkel-Kac-Segal
isomorphism between basic representations of affine algebras of
$A,D,E$ type and lattice vertex algebras.

\end{abstract}
\maketitle

\section*{Introduction}
\subsection{The Frenkel-Kac-Segal Isomorphism}
In their fundamental papers \cite{FK},\cite{Se}, I. Frenkel-Kac
and Segal constructed the bosonic realizations of the basic
representations of simply-laced affine algebras, using the vertex
operators of string theory. Let us first review their theorem.

\begin{s}
Let $\frakg$ be the Lie algebra of a simple algebraic group $G$
over $\C$, and $(\,,\,)$ denote the normalized invariant form on
$\frakg$. The untwisted affine Kac-Moody algebra associated to
$(\frakg,(\,,\,))$ is
\[\hat{\frakg}=\frakg\otimes \C[t,t^{-1}]\oplus \C K\]
where $K$ is central in $\hat{\frakg}$ and
\[[X\otimes t^m,Y\otimes t^n]=[X,Y]\otimes t^{m+n}+m\delta_{m,-n}(X,Y)K\]

Recall that the level $k$ vacuum module of $\hat{\frakg}$ is
defined as
\[\bbV(k\Lambda)=\ind_{\frakg\otimes\C[t]+\C
K}^{\hat{\frakg}}\C\] on which $\frakg\otimes\C[t]$ acts through
the trivial character and $K$ acts by multiplication by $k$.
$\bbV(k\Lambda)$ has a vertex algebra structure. When $k$ is a
positive integer, the vacuum module has a unique irreducible
quotient $L(k\Lambda)$, which is an integrable
$\hat{\frakg}$-module of level $k$. Furthermore, it is a quotient
vertex algebra of $\bbV(k\Lambda)$.

On the other hand, choose $\frakt\subset\frakg$ to be a Cartan
subalgebra of $\frakg$. Let $\iota:\frakt\to\frakt^*$ be the
isomorphism induced by the bilinear form on $\frakt$ incduced from
the normalized invariant form on $\frakg$. The restriction of the
central extension of $\frakg\otimes\C[t,t^{-1}]$ to
$\frakt\otimes\C[t,t^{-1}]$ defines the Heisenberg Lie algebra
$\hat{\frakt}\subset\hat{\frakg}$. Let $R_G\subset\frakt$ be the
coroot lattice of $G$. Define a module over $\hat{\frakt}$ by
\[V_{R_G}=\bigoplus_{\lambda\in R_G}\pi_\lambda\]
where $\pi_\lambda$ is the level one Fock module of $\hat{\frakt}$
with highest weight $\iota\lambda$ (see \ref{Heisenberg}). When
$G$ is simply-laced, there is a unique (up to isomorphism) simple
vertex algebra structure on $V_{R_G}$, corresponding to the
cohomology class $H^2(R_G,\C^*)$ determined by the invariant form.

Now the result of Frenkel-Kac and Segal may be stated as follows
(in the language of vertex algebras).
\end{s}
\begin{thm}\label{KFS-Isom}
If $\frakg$ is a simple Lie algebra of type $A$, $D$ or $E$, then
$L(\Lambda)\cong V_{R_G}$ as vertex algebras.
\end{thm}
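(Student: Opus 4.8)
The plan is to deduce the vertex algebra isomorphism from the identification of the dual Demazure module $D(\lambda)^\vee$ with the twisted function ring $\Gamma\big((\overline{\Gr}_G^\lambda)^T,\calL\big)$, where $\calL$ denotes the level one line bundle on $\Gr_G$. First I would recall the affine Borel--Weil description of the basic representation: writing $\Gr_G^0$ for the neutral connected component of $\Gr_G$, whose $T$-fixed points are exactly the points $t^\mu$ with $\mu\in R_G$, one has $H^0(\Gr_G^0,\calL)\cong L(\Lambda)^\vee$, and restriction of sections realises $L(\Lambda)$ as the direct limit $\varinjlim_\lambda D(\lambda)$ over the dominant coweights $\lambda\in R_G$. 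Passing to the limit in the Demazure identification then gives
\[ L(\Lambda)^\vee\;\cong\;\Gamma\big((\Gr_G^0)^T,\calL\big). \]

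Next I would analyse the right-hand side as a graded module. Since $(\Gr_G^0)^T$ is the disjoint union of its (generally non-reduced) components supported at the points $t^\mu$, the function ring splits accordingly, and I would check that the twisted functions on the component at $t^\mu$ form a level one Fock module while the $\calL$-twist places it in degree $\iota\mu$. This yields
\[ \Gamma\big((\Gr_G^0)^T,\calL\big)\;\cong\;\bigoplus_{\mu\in R_G}\pi_\mu\;=\;V_{R_G}, \]
and, after dualising and using that the Heisenberg structure is carried across by the natural action of $\frakt\otimes\C((t))$ on sections of $\calL$, an isomorphism $L(\Lambda)\cong V_{R_G}$ intertwining the actions of $\hat{\frakt}$.

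It then remains to upgrade this linear, Heisenberg-equivariant isomorphism to one of vertex algebras. Both structures are generated by the Heisenberg currents together with the lattice vertex operators $e^\mu$, $\mu\in R_G$, so it suffices to match these generating fields: the Heisenberg currents are handled by the previous step, and the operators $e^\mu$ should be realised geometrically from the translation action of $t^\mu$ on $\Gr_G$, which permutes the fixed points $t^\nu\mapsto t^{\mu+\nu}$ and acts on $\calL$. The step I expect to be the main obstacle is the cocycle: this translation action lifts to $\calL$ only projectively, and one must show that the resulting class in $H^2(R_G,\C^*)$ is precisely the class determined by the normalized invariant form that governs $V_{R_G}$. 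I would extract this cocycle from the commutator pairing of the central extension underlying $\calL$ and then verify that the locality and operator-product relations of the geometric fields coincide with those of the $e^\mu$; this pins down the vertex algebra structure and completes the proof.
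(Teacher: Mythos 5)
Your first two steps reproduce the paper's own argument for the linear statement (the theorem of \S\ref{Hei}): the affine Borel--Weil theorem (Proposition \ref{Borel-Weil}), the identification of $\Gr_T$ with $(\Gr_G)^T$ (Corollary \ref{T-fix point}), and Theorem \ref{Main Theorem} combine to give $L(\Lambda)\cong\bigoplus_{\mu\in R_G}\pi_\mu$ as $\hat{\frakt}$-modules, with each non-reduced fixed-point component contributing a Fock module exactly as you say. One caveat even here: for type $E$ the paper does \emph{not} have Theorem \ref{Main Theorem} for all dominant coweights, so your direct limit ``over all dominant $\lambda$'' cannot be run as stated; the paper instead takes the limit over the cofinal family $n\theta$ of multiples of the highest coroot, for which Propositions \ref{surjectivity}, \ref{factorization}, \ref{Minuscule} and \ref{higest coroot} suffice. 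This is fixable, but it must be said.

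The genuine gap is in your final step, the upgrade from a $\hat{\frakt}$-equivariant isomorphism to one of vertex algebras. A vertex algebra isomorphism must intertwine the state--field correspondences, and on the $L(\Lambda)$ side the fields are the ones inherited from the vacuum vertex algebra $\bbV(\Lambda)$; so what has to be proved is that $Y_{L(\Lambda)}(\,\cdot\,,z)$, evaluated on the extremal vectors $t^\mu v_\Lambda$, is carried by your linear map to the lattice fields $Y(e^\mu,z)$. The translation action of $t^\mu$ on $(\Gr_G,\calL_G)$ that you propose to use produces a single (projective) operator on $\Gamma(\Gr_G,\calL_G)^*$, not a field: to manufacture a $z$-family of operators from this geometry one must let the point of the disc move along a curve, and to check locality and operator products one must control the line bundle as two moving points collide. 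That is precisely the factorization structure on the Beilinson--Drinfeld Grassmannian, and it is how the paper closes the argument in \S\ref{ver}--\S\ref{FKS}: the lemma of \S\ref{FKS} identifies $\calL_G|_{\Gr_{T,X^n}}$ with $\calL_{T,X^n}$ compatibly with all factorizations (this is where the cocycle enters, via the commutator pairing (\ref{Contou-Carrere}) and the Contou-Carr\`ere symbol computation of \cite{KV} --- the obstacle you correctly flagged), and then the restriction morphism over $X^n$ is \emph{automatically} a morphism of factorization algebras, which is an isomorphism because that may be checked fiberwise, i.e.\ by the theorem of \S\ref{Hei}. No OPE verification is ever performed; compatibility of multiplicative structures is structural. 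In your plan, by contrast, ``verify that the locality and operator-product relations of the geometric fields coincide with those of the $e^\mu$'' is not a verification one can perform after the fact: absent the factorization formalism (or an equivalent device), it amounts to redoing the original Frenkel--Kac operator calculus, and moreover you would still need to identify your geometrically constructed fields with the intrinsic fields of $L(\Lambda)$ coming from $\bbV(\Lambda)$. As written, the proposal assumes the hard part rather than proving it.
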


In particular, we could obtain a character formula for
$L(\Lambda)$.
\begin{cor}\label{character formula}
$L(\Lambda)$ is isomorphic to $V_{R_G}$ as $\hat{\frakt}$-modules.
\end{cor}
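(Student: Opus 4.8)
The plan is to deduce the statement directly from Theorem \ref{KFS-Isom} by verifying that the vertex algebra isomorphism produced there is compatible with the Heisenberg action of $\hat{\frakt}$. The point is that on both sides the $\hat{\frakt}$-module structure is encoded inside the vertex algebra structure. Indeed, in the vacuum module $\bbV(\Lambda)$, and hence in its quotient $L(\Lambda)$, the operators $H\otimes t^n$ for $H\in\frakt$ are exactly the modes of the current $Y(H\otimes t^{-1}\cdot v_0,z)=\sum_n (H\otimes t^n)z^{-n-1}$, where $v_0$ is the vacuum vector; thus the action of $\hat{\frakt}$ is entirely captured by the vertex operators attached to the conformal weight one states $\frakt\otimes t^{-1}\cdot v_0$. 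On $V_{R_G}$ the Heisenberg action is encoded in precisely the same way, through the currents attached to the weight one states coming from $\iota\frakt$. Consequently it suffices to exhibit a vertex algebra isomorphism carrying the one copy of the Cartan onto the other.

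First I would record that any isomorphism of vertex algebras $\phi\colon L(\Lambda)\to V_{R_G}$ sends the vacuum to the vacuum and the conformal (Sugawara) vector to the conformal vector, and hence preserves the $L_0$-grading. In particular it restricts to a linear isomorphism of the weight one subspaces. Equipping each weight one subspace with the bracket $[a,b]=a_{(0)}b$ coming from the zero mode of the state--field correspondence turns it into a Lie algebra, and this bracket is preserved by $\phi$. By the Frenkel--Kac construction both weight one spaces are isomorphic to $\frakg$, so $\phi$ restricts to a Lie algebra isomorphism $\frakg\xrightarrow{\sim}\frakg$ on weight one, already giving the equality of graded characters asserted implicitly by the heading.

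The step I expect to require the most care is matching the two Cartan subalgebras: the Lie isomorphism on weight one need not send the chosen $\frakt\subset\frakg$ on the source to the distinguished $\iota\frakt$ on the target. To remedy this I would use that the zero modes $a_{(0)}$ of weight one states act as derivations of the vertex algebra and exponentiate to an action of the adjoint group $G$ by vertex algebra automorphisms. Since all Cartan subalgebras of $\frakg$ are conjugate under this action, I can post-compose $\phi$ with a suitable such automorphism so that the resulting isomorphism carries $\frakt$ onto $\iota\frakt$, and hence carries the Heisenberg subalgebra $\hat{\frakt}\subset\hat{\frakg}$ acting on $L(\Lambda)$ to the Heisenberg subalgebra acting on $V_{R_G}$.

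With the Cartans aligned, the proof concludes formally: applying the vertex algebra axiom $\phi(a_{(n)}b)=\phi(a)_{(n)}\phi(b)$ to the states $a=H\otimes t^{-1}\cdot v_0$ with $H\in\frakt$ shows that $\phi$ intertwines the corresponding Heisenberg currents mode by mode, so that $\phi$ commutes with the action of each $H\otimes t^n$ and of $K$. Thus $\phi$ is an isomorphism of $\hat{\frakt}$-modules, which is exactly the assertion of the corollary.
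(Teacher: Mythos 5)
Your overall route is the paper's own: Corollary \ref{character formula} is presented there as an immediate consequence of Theorem \ref{KFS-Isom} (the paper even remarks that historically the implication went the other way, the character formula having been used to prove the FKS isomorphism), and your proposal fleshes out exactly that deduction. The parts of your argument dealing with Cartan alignment --- the zero-mode Lie bracket on the weight-one subspace, the exponentiated derivations $\exp(a_{(0)})$ giving an action of the adjoint group by vertex algebra automorphisms, and conjugacy of Cartan subalgebras --- are sound, and they address a point the paper silently glosses over.

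However, your first step contains a genuine gap: it is not true that an isomorphism of \emph{vertex algebras} automatically ``sends the conformal (Sugawara) vector to the conformal vector.'' A vertex algebra homomorphism is only required to preserve the vacuum and the operations $a_{(n)}b$ (hence the translation operator $T$, since $Ta=a_{(-2)}\mathbf{1}$); the conformal vector is additional structure, and it is genuinely non-unique. Already in the rank-one Heisenberg vertex algebra, $\tfrac{1}{2}b_{(-1)}^{2}\mathbf{1}+\lambda b_{(-2)}\mathbf{1}$ is a conformal vector for every $\lambda\in\C$, so the identity map is a vertex algebra isomorphism that fails to match one choice of conformal vector with another. What your argument actually needs is the weaker statement that $\phi$ preserves the $L_0$-gradings. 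This is true for $L(\Lambda)$ and $V_{R_G}$, but it requires a real argument --- for instance: the difference of two grading operators is a derivation; for these algebras every derivation is the zero mode $a_{(0)}$ of a weight-one vector; and a nonzero semisimple shift $h_{(0)}$ would make the transported grading unbounded below, because $(\lambda,h)$ takes arbitrarily negative values as $\lambda$ runs over $R_G$. Alternatively, one can simply read Theorem \ref{KFS-Isom} the way it is actually proved: the Frenkel--Kac--Segal isomorphism is constructed as an isomorphism of \emph{conformal} vertex algebras built directly on the Fock space decomposition, so grading preservation (indeed $\hat{\frakt}$-equivariance itself) is part of the construction --- which is why the paper treats the corollary as immediate. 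A second, smaller gap: after you arrange $\phi(\frakt)=\iota\frakt$, the induced map $\frakt\to\frakt$ is only some isometry $w$ preserving the root system, not necessarily the identity, so $\phi$ intertwines $H\otimes t^{n}$ with $(wH)\otimes t^{n}$; that is, $\phi$ lands in the $w$-twist of $V_{R_G}$ as an $\hat{\frakt}$-module. This is harmless --- since $w$ preserves $R_G$, relabelling the Fock summands $\pi_\lambda\mapsto\pi_{w\lambda}$ identifies the twist with $V_{R_G}$, or one may compose $\phi$ with an automorphism of $V_{R_G}$ lifting $w^{-1}$ --- but the step needs to be said, since ``carries $\hat{\frakt}$ onto $\hat{\frakt}$'' is weaker than ``commutes with each $H\otimes t^{n}$.''
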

We should point out that we reverse the historical order here. In
fact, this corollary was obtained before the FKS isomorphism (cf.
\cite{Kac} (3.37)), and was used to prove the FKS isomorphism.

\subsection{Main Results}
The goal of the present paper is to interpret the FKS isomorphism
from an algebro-geometrical point of view.

\begin{s}The starting point is the Borel-Weil theorem for affine
Kac-Moody algebras, which was originally proved in \cite{Ku} and
\cite{Ma}. For any algebraic group $G$ over $\C$, let
$\Gr_G=G_\K/G_\calO$ be the affine Grassmannian of $G$, where
$G_\K$ is group of maps from punctural disc to $G$ and $G_\calO$
is the group of maps from disc to $G$. When $G$ is simple,
simply-connected, there is an ample invertible sheaf $\calL_G$ on
$\Gr_G$, which is a generator of the Picard group of $\Gr_G$. The
Borel-Weil type theorem identifies $L(k\Lambda)$ with
$\Gamma(\Gr_G,\calL_G^{\otimes k})^*$. On the other hand, for a
chosen maximal torus $T\subset G$, the affine Grassmannian $\Gr_T$
of $T$ naturally embeds into $\Gr_G$, and $V_{R_G}$ is then
identified with $\Gamma(\Gr_G,\calO_{\Gr_T}\otimes\calL_G)^*$.
Therefore, the question now is whether the natural morphism
\begin{equation}\label{restriction}\calL_G\to\calO_{\Gr_T}\otimes\calL_G\end{equation} induces
an isomorphism between the spaces of their global sections, if $G$
is simply-laced.

It turns out that we can push the question further. Recall that
$\Gr_G$ is stratified by $G_\calO$-orbits, which are parameterized
by dominant coweights. For a dominant coweight $\lambda$, the
corresponding $G_\calO$-orbit is denoted by $\Gr_G^\lambda$. Let
$\overline{\Gr}_G^\lambda$ be the closure of $\Gr_G^\lambda$ in
$\Gr_G$. Since $G$ is simple,
$\Gr_G=\lim\limits_{\longrightarrow}\overline{\Gr}_G^\lambda$.
Then one could ask, whether the restriction of (1) to each
$\overline{\Gr}_G^\lambda$ will still induce an isomorphism
between the spaces of their global sections. Let us reformulate
the question slightly differently. The maximal torus $T$ of $G$
acts on the affine Grassmannian $\Gr_G$ as well as on each
Schubert variety $\overline{\Gr}_G^\lambda$. It will be shown that
the embedding $\Gr_T\subset \Gr_G$ identifies $\Gr_T$ as the
$T$-fixed point subscheme of $\Gr_G$. Therefore,
$\Gr_T\times_{\Gr_G}\overline{\Gr}_G^\lambda$ is the $T$-fixed
point subscheme $(\overline{\Gr}_G^\lambda)^T$ of
$\overline{\Gr}_G^\lambda$. Then the restriction of
(\ref{restriction}) to $\overline{\Gr}_G^\lambda$ becomes
\begin{equation}\label{further restriction}\calO_{\overline{\Gr}_G^\lambda}\otimes\calL_G\to\calO_{(\overline{\Gr}_G^\lambda)^T}\otimes\calL_G
\end{equation}

The main theorem of this paper is
\end{s}
\begin{thm}\label{Main Theorem}
Let $G$ be a simple, connected (not necessarily simply-connected)
algebraic group. Let $\Gr_G$ be the affine Grassmannian of $G$ and
$\overline{\Gr}_G^\lambda$ be the Schubert variety associated to a
dominant coweight $\lambda$. Fix $T\subset G$ a maximal torus, and
let $(\overline{\Gr}_G^\lambda)^T$ be the $T$-fixed subscheme of
$\overline{\Gr}_G^\lambda$. Let $\calL_G$ be the ample invertible
sheaf on $\Gr_G$, which is the generator of the Picard group of
each connected component. Then for all $\lambda$ if $G$ is of type
$A$ or $D$, and for many of then if $G$ is of type $E$, the
natural morphism (\ref{further restriction}) induces an
isomorphism
\[\Gamma(\overline{\Gr}_G^\lambda,\calL_G)=\Gamma(\overline{\Gr}_G^\lambda,\calO_{(\overline{\Gr}_G^\lambda)^T}\otimes\calL_G)\]
\end{thm}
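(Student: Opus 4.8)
The plan is to turn the isomorphism into a surjectivity statement plus a dimension count, and to extract the dimension count from the combinatorics of the $T$-fixed points.

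First I would reduce to $G$ simply-connected, so that $\calL_G$ is the ample generator of $\Pic(\Gr_G)$ and the affine Borel--Weil theorem of \cite{Ku}, \cite{Ma} applies. It identifies $\Gamma(\overline{\Gr}_G^\lambda,\calL_G)$ with the dual $V_\lambda^*$ of the level-one affine Demazure module $V_\lambda$ and yields $H^i(\overline{\Gr}_G^\lambda,\calL_G)=0$ for $i>0$; in particular $\dim\Gamma(\overline{\Gr}_G^\lambda,\calL_G)=\dim V_\lambda$ is computable from the Demazure character formula. Writing $\I$ for the ideal sheaf of $(\overline{\Gr}_G^\lambda)^T$, the morphism (\ref{further restriction}) fits into $0\to\I\otimes\calL_G\to\calL_G\to\calO_{(\overline{\Gr}_G^\lambda)^T}\otimes\calL_G\to 0$, and since the fixed-point scheme is finite the target has dimension equal to its length $\ell=\sum_\mu\ell_\mu$, summed over the fixed points $t^\mu$. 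Thus it suffices to prove (a) restriction is surjective and (b) $\dim V_\lambda=\ell$: a surjection between finite-dimensional spaces of equal dimension is an isomorphism.

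The organizing principle throughout is the Heisenberg subalgebra $\hat{\frakt}\subset\hat{\frakg}$. The $T$-fixed points of $\Gr_G$ lying in $\overline{\Gr}_G^\lambda$ are the $t^\mu$ for the finitely many coweights $\mu\preceq\lambda$ congruent to $\lambda$ modulo $R_G$, and near each such point $\Gr_T$ meets $\overline{\Gr}_G^\lambda$ in a fat scheme whose jet directions are precisely the negative Heisenberg directions $\frakt\otimes t^{-n}$. I would make this explicit by choosing coordinates on $\Gr_G$ near $t^\mu$ (via a big-cell or Iwahori slice), reading off the local ring of $(\overline{\Gr}_G^\lambda)^T$ at $t^\mu$, and computing its length $\ell_\mu$ as a generating-function count. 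For (a) the same picture gives surjectivity: the extremal weight vectors of $V_\lambda^*$ restrict to generators at the reduced points $t^\mu$, and the negative Heisenberg operators on $\Gamma(\overline{\Gr}_G^\lambda,\calL_G)$ --- the global avatars of those jet directions --- carry them onto a spanning set of $\calO_{(\overline{\Gr}_G^\lambda)^T}\otimes\calL_G$. The technical content here is to check that restriction intertwines these operators with multiplication by the local coordinates; I expect this to be type-independent.

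The crux is (b), and it is where the type restriction appears. I would match the two sides weight space by weight space, comparing the multiplicity of $t^\mu$ in $V_\lambda$ with the local length $\ell_\mu$. The known $\hat{\frakt}$-module identity $L(\Lambda)\cong V_{R_G}$ of \cite{Kac} --- which predates and is independent of the vertex-algebra isomorphism --- fixes the target: on the infinite level every weight multiplicity of the basic representation is the corresponding Fock multiplicity of $\pi_\mu$, so the question becomes whether the finite Demazure truncation agrees, weight by weight, with the geometric truncation recorded by $\ell_\mu$. The main obstacle is exactly this local length computation and its matching. For simply-laced $G$ the tangent cone of $\overline{\Gr}_G^\lambda$ at $t^\mu$ is transverse enough to $\Gr_T$ that $\ell_\mu$ realizes the expected Fock count for every $\lambda$ in types $A$ and $D$; in type $E$ this transversality can fail for some $\lambda$, so that $\ell_\mu$ and the Demazure weight multiplicity diverge and (\ref{further restriction}) ceases to be an isomorphism. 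Pinning down precisely which $\lambda$ survive in type $E$ is the delicate combinatorial heart of the argument, and I would treat it by a case analysis of the fixed points together with the explicit local equations found above.
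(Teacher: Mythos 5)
Your reduction of the theorem to (a) surjectivity plus (b) the dimension identity $\dim V_\lambda=\ell$ is sound, and your argument for (a) is essentially the paper's Proposition \ref{surjectivity}: dually, restriction is a map of $\hat{\frakt}$-modules whose source is a direct sum of Fock modules, each simple, so nonvanishing (exhibited on extremal vectors) gives injectivity. But (b), as you have set it up, \emph{is} the entire theorem, and your plan for it is a restatement of the problem rather than an argument. Computing the length $\ell_\mu$ of $\calO_{(\overline{\Gr}_G^\lambda)^T,s^\mu}$ from ``explicit local equations'' presupposes knowing the defining ideal of $\overline{\Gr}_G^\lambda$ near a non-extremal fixed point $s^\mu$, which is exactly the inaccessible datum; no method for this is offered. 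Likewise, the multiplicities you must match are Demazure weight multiplicities $\dim V_\lambda(-\iota\mu)$, which Kac's identity $L(\Lambda)\cong V_{R_G}$ does not give (it concerns the full basic representation, not its Demazure truncations), so the combinatorial side of the matching is equally unresolved; the phrase ``transverse enough'' carries all the content and is never substantiated. In the paper the logical flow is the reverse of yours: the equality $\dim V_\lambda(-\iota\mu)=\mathrm{length}_\C\calO_{(\overline{\Gr}_G^\lambda)^T,s^\mu}$ is Corollary \ref{length}, \emph{deduced from} the main theorem. What the paper proves directly is the complementary vanishing $\Gamma(\overline{\Gr}_G^\lambda,\I^\lambda(1))=0$ (Theorem \ref{reduced main theorem }): for fundamental coweights, a section vanishing at all $T$-fixed points is forced to vanish identically by propagation along degree-one $T$-invariant rational curves (Lemma \ref{ration curves}) and induction over admissible $T$-invariant subvarieties (Propositions \ref{Minuscule}, \ref{higest coroot}, Theorem \ref{type D}); a general $\lambda$ is then reduced to fundamental coweights by Proposition \ref{factorization}, where the flat degeneration of Proposition \ref{flat family} together with the Fourier--Littelmann tensor theorem (Theorem \ref{tensor structure}) supplies the multiplicative dimension identity with no local length computation at all.

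Two further points are incorrect. First, your opening reduction ``to $G$ simply-connected'' goes the wrong way: the paper reduces to $G$ \emph{adjoint}, and the essential building blocks --- minuscule fundamental coweights, e.g.\ $\overline{\Gr}_{PGL_{n+1}}^{\omega_1}\cong\bbP^n$ --- are Schubert varieties only of non-simply-connected groups; they are not Schubert varieties of $\Gr_{\tilde{G}}$ at all, and treating them requires the Borel--Weil theorem for non-neutral components (Proposition \ref{Borel-Weil}). Second, your claim that in type $E$ the transversality ``can fail \ldots so that (\ref{further restriction}) ceases to be an isomorphism'' contradicts the paper: the theorem is expected to hold for all $\lambda$ in type $E$ (and FKS does hold there); the hedge ``for many of them'' reflects only that the case-by-case analysis of fundamental coweights is incomplete, not that the statement fails for some $\lambda$.
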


We expect that the theorem still holds for all $\lambda$ if $G$
is of type $E$, for the FKS isomorphism holds for all
simply-laced algebraic groups. However, so far only partial
results are obtained for this type, see \ref{type E,I}-\ref{type
E,II}. Observe that the theorem could not hold for
non-simply-laced algebraic groups.

\begin{s}
One of the applications of the above theorem is to the study of
the singularities of Schubert varieties in the affine
Grassmannian. In principle, for any algebraic variety with a torus
action, the singularity at a fixed point under the action is
reflected by the local ring of the fixed point subscheme at that
point. In particular, we will prove that
\end{s}

\begin{cor}\label{Smooth locus}
Let $G$ be a simple algebraic group of type $A$ or $D$. Then the
smooth locus of $\overline{\Gr}_G^\lambda$ is $\Gr_G^\lambda$.
\end{cor}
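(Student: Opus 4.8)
The plan is to establish the reverse inclusion, namely that $\overline{\Gr}_G^\lambda$ is singular at every point of the boundary $\overline{\Gr}_G^\lambda\setminus\Gr_G^\lambda$, detecting the singularities through the $T$-fixed point subscheme via the Main Theorem (which holds for all $\lambda$ in types $A$ and $D$). Since $G_\calO$ acts on $\overline{\Gr}_G^\lambda$ by automorphisms through a finite-dimensional quotient, the singular locus is closed and $G_\calO$-stable, hence a union of orbits $\Gr_G^\mu$; the open orbit $\Gr_G^\lambda$ is homogeneous and so smooth. It therefore suffices to show that each boundary orbit $\Gr_G^\mu$, indexed by a dominant $\mu<\lambda$, lies in the singular locus. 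As the singularity type is constant along a $G_\calO$-orbit and $t^\mu\in\Gr_G^\mu$, I reduce to proving that $\overline{\Gr}_G^\lambda$ is singular at the $T$-fixed point $t^\mu$.

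Next I would invoke the standard fact that a torus acting on a smooth variety has smooth fixed locus: for a finite-type $T$-scheme $X$ with a $T$-fixed point $x$, smoothness of $X$ at $x$ forces $X^T$ to be smooth at $x$. Since $(\overline{\Gr}_G^\lambda)^T$ is zero-dimensional, smoothness of the fixed scheme at $t^\mu$ is the same as reducedness there, i.e.\ length one. Contrapositively, it is enough to prove that the local ring of $(\overline{\Gr}_G^\lambda)^T$ at $t^\mu$ has length at least two.

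To compute this length I would apply the Main Theorem weight by weight. The restriction $\calL_G|_{(\overline{\Gr}_G^\lambda)^T}$ is a line bundle on a finite scheme, so its sections split as $\bigoplus_\nu\bigl(\calO_{(\overline{\Gr}_G^\lambda)^T,t^\nu}\otimes\calL_G|_{t^\nu}\bigr)$. The torus $T$ acts trivially on the fixed subscheme but through a character linear in $\nu$ on the fibre $\calL_G|_{t^\nu}$, so distinct fixed points occupy distinct $T$-isotypic components; the component corresponding to $\mu$ is precisely the local ring at $t^\mu$. Because the morphism (\ref{further restriction}) is $T$-equivariant, the Main Theorem identifies this length with the multiplicity of the finite weight $\mu$ in the level-one affine Demazure module $D_\lambda\subset L(\Lambda)$. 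The goal becomes $\dim(D_\lambda)_\mu\ge 2$ for dominant $\mu<\lambda$.

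Finally I would exhibit two independent weight-$\mu$ vectors in $D_\lambda$, separated by their loop-rotation (conformal) weight, i.e.\ the coefficient of $-\delta$. One is the extremal vector $v_{t^\mu}$, which lies in $D_\lambda$ because $t^\mu\in\overline{\Gr}_G^\lambda$ and which has conformal weight $\tfrac12(\mu,\mu)$. For the other, note that $D_\lambda$ is $G_\calO$-stable, hence stable under $\frakg\otimes 1$; in the conformal-weight-$\tfrac12(\lambda,\lambda)$ slice the extremal vector $v_{t^\lambda}$ is a $\frakg$-highest weight vector, so applying the lowering operators $f_{i_1}\cdots f_{i_k}$ with $\sum\alpha_{i_j}=\lambda-\mu$ produces a nonzero weight-$\mu$ vector of conformal weight $\tfrac12(\lambda,\lambda)$ inside the copy of $V(\lambda)$ it generates. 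Writing $(\lambda,\lambda)-(\mu,\mu)=(\lambda-\mu,\lambda-\mu)+2(\lambda-\mu,\mu)$ and using that $\lambda-\mu$ is a nonzero sum of positive coroots while $\mu$ is dominant gives $(\mu,\mu)<(\lambda,\lambda)$, so the two conformal weights differ and $\dim(D_\lambda)_\mu\ge 2$. I expect the main obstacle to be the bookkeeping of the third paragraph --- verifying that the Main Theorem isomorphism is genuinely $T$-equivariant and that the fibre character of $\calL_G$ at $t^\nu$ is linear in $\nu$ --- since this is what converts the abstract Demazure multiplicity into the local length and lets the $T$-decomposition isolate a single fixed point.
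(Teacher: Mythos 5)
Your proposal is correct, and its skeleton coincides with the paper's own proof: you reduce to showing non-smoothness at the points $s^\mu$, $\mu\in\Lambda_G^+$, $\mu<\lambda$; you use that the $T$-fixed subscheme of a smooth variety is smooth, hence reduced (the paper's Lemma~\ref{smoothness}), so a non-reduced local ring of $(\overline{\Gr}_G^\lambda)^T$ at $s^\mu$ certifies a singularity; and you convert the length of that local ring into the finite-weight multiplicity $\dim V_\lambda(-\iota\mu)$ via $T$-equivariance of the isomorphism in Theorem~\ref{Main Theorem} (the paper's Corollary~\ref{length}). Where you genuinely diverge is the last step, $\dim V_\lambda(-\iota\mu)\geq 2$. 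The paper deduces it from a sublemma --- surjectivity of $H^0(\overline{\Gr}_G^\lambda,\calL_G^{\otimes k})\to H^0(\overline{\Gr}_G^\mu,\calL_G^{\otimes k})$, proved by Frobenius splitting in characteristic $p$ plus semicontinuity (or, for $k=1$ and types $A,D$, from the main theorem itself) --- which exhibits $V_\mu$ as a $\tilde{G}$-direct summand of $V_\lambda$ complementary to the irreducible summand $V^{-w_0\iota\lambda}$, and then takes one weight-$(-\iota\mu)$ vector in each summand. You take essentially the same two vectors --- the extremal vector $v_{t^\mu}$ and a vector of the same finite weight inside the $\frakg\otimes 1$-submodule generated by $v_{t^\lambda}$ --- but you separate them by the $\check{\delta}$-grading instead: their degrees are $-\tfrac12(\mu,\mu)$ and $-\tfrac12(\lambda,\lambda)$, which differ because the form is positive definite and $\mu$ is dominant. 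This bypasses the sublemma (and hence the Frobenius-splitting input) entirely, so your route is more elementary and self-contained, needing only that $V_\lambda$ is stable under $\frakg\otimes 1$ and loop rotation together with the extremal-weight formula $\Lambda-\iota\nu-\tfrac12(\nu,\nu)\check{\delta}$; what the paper's route buys is the sublemma itself, valid for all $k$ and all types, which has independent interest. Two minor repairs to your write-up: with the paper's conventions $v_{t^\lambda}$ has finite weight $-\iota\lambda$, which is \emph{anti}dominant, so it is a lowest-weight vector for $\frakg$ and one reaches weight $-\iota\mu$ with raising operators $e_i$, not lowering operators $f_i$; and rather than asserting that a particular monomial in root vectors is nonzero, it is cleaner to say that the $\frakg$-submodule generated by $v_{t^\lambda}$ is the irreducible $V^{-w_0\iota\lambda}$, whose $(-\iota\mu)$-weight space is nonzero because $\iota\mu$ and $\iota\lambda$ are dominant with $\iota\mu\leq\iota\lambda$.
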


In fact, this holds for any simple algebraic group, as is shown in
\cite{EM} and \cite{MOV}. However, since we will deduce it as a
corollary of our main theorem, we will confine ourselves to
algebraic groups of type $A$ or $D$. Remark that the same
statement of our main theorem for algebraic groups of type $E$
will imply the above corollary for them as well.

\begin{s}
The main application of Theorem \ref{Main Theorem} is an
alternative proof of the FKS isomorphism, which is applicable to
all simply-laced algebraic groups. More precisely, we will prove a
geometrical version of the isomorphism using Beilinson and
Drinfeld's idea of factorization algebras (cf. \cite{BD1}). In
addition, the main theorem also allows us to identify the modules
over the basic representations with the modules over the lattice
vertex algebras.

Assume that $G$ is of adjoint type. Let $\Lambda_G$ be the
coweight lattice of $G$. It is known (cf. \cite{Do}) that the
simple modules over $V_{R_G}$ are labelled by
$\gamma\in\Lambda_G/R_G$. Let $V^\gamma_{R_G}$ denote the one
corresponding to $\gamma\in\Lambda_G/R_G$. As
$\hat{\frakt}$-modules,
\[V^\gamma_{R_G}=\bigoplus_{\lambda\in\gamma+R_G}\pi_\lambda\]
It is known that for any $\gamma\in\Lambda_G/R_G$, there is a
unique minuscule fundamental coweight $\omega_{i_\gamma}$, which
is a representative of the coset $\gamma$. Then the simple
$\hat{\frakg}$-module $L(\Lambda+\iota\omega_{i_\gamma})$ of
highest weight $(\Lambda+\iota\omega_{i_\gamma})$ has a module
structure over $L(\Lambda)$. We have
\end{s}
\begin{thm}\label{Vertex operator modules} Under the identification of $L(\Lambda)\cong V_{R_G}$ of vertex algebras, we have an
isomorphism \[L(\Lambda+\iota\omega_{i_\gamma})\cong
V^\gamma_{R_G}\] as modules over them.
\end{thm}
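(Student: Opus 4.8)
The plan is to extend the geometric realization underlying the FKS isomorphism from the identity component of the affine Grassmannian to the remaining connected components, and then to promote the resulting linear isomorphisms to an isomorphism of factorization modules. Since $G$ is of adjoint type, $\pi_0(\Gr_G)=\pi_1(G)=\Lambda_G/R_G$, so the connected components of $\Gr_G$ are naturally indexed by the cosets $\gamma\in\Lambda_G/R_G$; write $\Gr_G^\gamma$ for the component containing the point $t^{\omega_{i_\gamma}}$ attached to the minuscule representative $\omega_{i_\gamma}$. The component indexed by $0$ is the one underlying the basic representation, and the geometric proof of Theorem \ref{KFS-Isom} already identifies its sections $\Gamma(\Gr_G^0,\calL_G)^*\cong L(\Lambda)$ with $\Gamma(\Gr_G^0,\calO_{\Gr_T}\otimes\calL_G)^*\cong V_{R_G}$ as vertex algebras.

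First I would establish the two geometric identifications on the component $\Gr_G^\gamma$. On one side, a Borel--Weil type theorem (as in \cite{Ku}, \cite{Ma}, applied to non-identity components) should give
\[\Gamma(\Gr_G^\gamma,\calL_G)^*\cong L(\Lambda+\iota\omega_{i_\gamma}),\]
since $\calL_G$ has level one and $\Gr_G^\gamma$ carries precisely the level-one integrable highest weight module whose finite part is the minuscule weight $\iota\omega_{i_\gamma}$. On the other side, intersecting $\Gr_T$ with $\Gr_G^\gamma$ picks out the coset $\gamma+R_G$ of the cocharacter lattice, so that
\[\Gamma(\Gr_G^\gamma,\calO_{\Gr_T}\otimes\calL_G)^*\cong\bigoplus_{\lambda\in\gamma+R_G}\pi_\lambda=V^\gamma_{R_G}\]
as $\hat{\frakt}$-modules. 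The underlying linear isomorphism $L(\Lambda+\iota\omega_{i_\gamma})\cong V^\gamma_{R_G}$ then follows by applying Theorem \ref{Main Theorem} to each Schubert variety $\overline{\Gr}_G^\lambda$ with $\lambda\in\gamma+R_G$ dominant and passing to the colimit, exactly as in the vertex-algebra case; note that each such component contains its minuscule Schubert variety, which is covered by the theorem in types $A$ and $D$.

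It remains to match the module structures, and this is where the factorization formalism of \cite{BD1} does the work. The action of $L(\Lambda)$ on $L(\Lambda+\iota\omega_{i_\gamma})$, and the action of $V_{R_G}$ on $V^\gamma_{R_G}$, should both be realized as the same geometric factorization-module structure: on the Beilinson--Drinfeld version of the affine Grassmannian over a curve, the sections of $\calL_G$ form a factorization algebra whose fibre at a point recovers $L(\Lambda)^*$ (resp. $V_{R_G}^*$), while the sections over the family with one marked point forced into $\Gr_G^\gamma$ yield a factorization module whose fibre recovers $L(\Lambda+\iota\omega_{i_\gamma})^*$ (resp. $(V^\gamma_{R_G})^*$). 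The restriction morphism (\ref{restriction}), being a map of $\calL_G$-twisted sheaves compatible with the factorization structure, then induces a morphism of factorization modules; dualizing and taking fibres upgrades the linear isomorphism above to an isomorphism of modules over the identified vertex algebras $L(\Lambda)\cong V_{R_G}$.

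The main obstacle I anticipate is exactly this last compatibility: one must verify that the geometric factorization-module structure genuinely reproduces the algebraic $L(\Lambda)$-action on $L(\Lambda+\iota\omega_{i_\gamma})$ (and the $V_{R_G}$-action on $V^\gamma_{R_G}$), and that the restriction map is a morphism of factorization modules rather than merely of the underlying spaces. This is the module-level analogue of the vertex-algebra compatibility already required in the proof of the FKS isomorphism, and amounts to tracking the factorization structure through the identifications of the previous two paragraphs. A secondary caveat is the type $E$ case: because Theorem \ref{Main Theorem} is so far established only for many $\lambda$ there, the linear isomorphism, and hence the module isomorphism, is currently complete only in types $A$ and $D$ and for those cosets $\gamma$ whose dominant representatives are all covered.
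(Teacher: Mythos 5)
Your first two steps (Borel--Weil on the nonneutral components, the $T$-fixed point identification, and passage to the colimit) are exactly how the paper obtains the $\hat{\frakt}$-module isomorphism $L(\Lambda+\iota\omega_{i_\gamma})\cong V^\gamma_{R_G}$ in \S\ref{Hei}. One correction there: your type $E$ caveat is unnecessary. The paper does not apply Theorem \ref{Main Theorem} to every dominant $\lambda\in\gamma+R_G$; it takes the colimit over the cofinal family $\omega_{i_\gamma}+n\theta$ (any dominant coweight satisfies $\lambda\leq n\theta$ for $n$ large), where the required statement follows from Propositions \ref{surjectivity}, \ref{factorization}, \ref{Minuscule} and \ref{higest coroot}. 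So the linear isomorphism, and hence the theorem, is available in type $E$ as well.

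The genuine gap is in your last step, and you name it yourself: you never verify that the geometric factorization-module structure on $\Gamma((\Gr_G)^\gamma,\calL_G)^*$ reproduces the algebraic $L(\Lambda)$-action on $L(\Lambda+\iota\omega_{i_\gamma})$, nor that the restriction map is a morphism of factorization modules. That compatibility is the entire content of the theorem, and it is not routine --- already for the algebra (rather than module) structure on the neutral component the paper must invoke \cite{FB} Proposition 20.4.3 and \cite{Ga} Theorem 5.3.1. The paper's proof avoids constructing any module structure geometrically, which is the idea your proposal is missing. It argues as follows: the category of modules over the lattice factorization algebra $\{\mathcal G_n\}$ supported at a point $x$ is semisimple, with simple objects $V^\gamma_L$ for $\gamma\in L'/L$ (\cite{Do}, \cite{Be}); through the already-established isomorphism of factorization algebras $\{\mathcal G_n\}\cong\{\F_n\}$ (the geometric FKS isomorphism of \S\ref{FKS}), $V^\gamma_{R_G}$ becomes a simple $\{\F_n\}$-module, in particular a simple $\hat{\frakg}$-module; by the result of \S\ref{Hei} it has the same $\hat{\frakt}$-module structure, hence the same character, as $L(\Lambda+\iota\omega_{i_\gamma})$; a simple $\hat{\frakg}$-module with this character must be $L(\Lambda+\iota\omega_{i_\gamma})$, and an isomorphism of $\hat{\frakg}$-modules here is the same thing as an isomorphism of modules over $L(\Lambda)\cong V_{R_G}$. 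Thus simplicity plus transport of structure across the vertex-algebra isomorphism replaces the compatibility check you flag as your main obstacle; to complete your route you would instead have to carry out that check, which is a substantial piece of work your proposal does not contain.
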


\subsection{New perspectives of the FKS isomorphism}Our
geometrical interpretation of the FKS isomorphism brings us more
insight into this fundamental theorem. We briefly indicate some of
the new perspectives in this introduction. Details and further
discussions will appear in \cite{Zhu}. The rest of the paper is
independent of this subsection.

\begin{s}
As we point out, the FKS isomorphism (more precisely, Corollary
\ref{character formula}) amounts to
\[\Gamma(\Gr_G,\calL_G)\cong\Gamma(\Gr_T,\calL_G|_{\Gr_T})\]
The crucial observation is that instead of interpreting $\Gr_T$ as
the $T$-fixed point subcheme of $\Gr_G$, we can also interpret it
as the affine Springer fiber (cf. \cite{KL} and \cite{GKM}) in
$\Gr_G$ corresponding to the regular semisimple element $\rho$ in
$\frakg(\K)$. This naturally leads to the following consideration.
Let $u\in\frakg(\K)$ be any regular semi-simple element and let
$\Gr_{G,u}$ be the corresponding affine Springer fiber. We can ask
the similar question whether the natural map
\begin{equation}\label{springer fiber}\Gamma(\Gr_G,\calL_G)\to\Gamma(\Gr_{G,u},\calL_G|_{\Gr_{G,u}})\end{equation}
is an isomorphism in the case when $G$ is simply-laced.
Furthermore, whether the restriction of the above map to each
Schubert variety $\overline{\Gr}_G^\lambda$
\begin{equation}\label{springer fiber cap schubert variety}\Gamma(\overline{\Gr}_G^\lambda,\calL_G)\to\Gamma(\overline{\Gr}_G^\lambda\cap\Gr_{G,u},\calL_G|_{\overline{\Gr}_G^\lambda\cap\Gr_{G,u}})\end{equation}
is an isomorphism. In the following, we will see why these are
interesting questions to ask.
\end{s}

\begin{s}
Let $J_u$ be the centralizer of $u$ in $G_\K$. This is a maximal
torus of $G_\K$, which is usually non-split. It is well-known (cf.
\cite{KL}) that the conjugacy classes maximal tori in $G_\K$ are
parameterized by conjugacy classes of the Weyl group of $G$. For
example, if $u=\rho$, then $J_\rho$ is a split torus which
corresponds to the identity element in the Weyl group while if
$u=p=\sum e_{\check{\alpha}_i}+tf_{\check{\theta}}$, $J_p$
corresponds to the Coxeter element.

The $\G_m$-central extension of $G_\K$ gives rise to a
$\G_m$-central extension of $J_u$, called the Heisenberg group,
and denoted by $\hat{J}_u$. For example, $\hat{J}_\rho$ is called
the homogeneous Heisenberg group and $\hat{J}_p$ is called the
principal Heisenberg group. The dual statement of (\ref{springer
fiber}) says that $L(\Lambda)$ is isomorphic to
$\Gamma(\Gr_{G,u},\calL_G|_{\Gr_{G,u}})^*$ as $\hat{J}_u$-modules.
If $\Gr_{G,u}$ is 0-dimensional, one can show that
$\Gamma(\Gr_{G,u},\calL_G|_{\Gr_{G,u}})^*$ is an irreducible
$\hat{J}_u$ module. Therefore, (\ref{springer fiber}) implies that
$L(\Lambda)$ remains irreducible as a $\hat{J}_u$-module, for any
type of Heisenberg group $\hat{J}_u$. In particular, if $u=\rho$,
this is the FKS isomorphism and if $u=p$, this is the so-called
principal realization of the basic representation, which was
proved in \cite{KKLW}. (We remark here that there is also a
statement for $u=p$ similarly to Theorem \ref{Main Theorem}, which
can be obtained by the similarly method in this paper.) However,
the types of such Heisenberg groups are parameterized by conjugacy
classes of the Weyl group of $G$. We thus obtain for any conjugacy
class of the Weyl group, a realization of the basic
representation. Indeed, this has already been considered in
\cite{KP}.
\end{s}

\begin{s}
Now we give another interesting indication of (\ref{springer
fiber}). For this, we should first recall the finite dimensional
story. Let $\check{\lambda}$ be a dominant weight of $G$. We
denote $P_{\check{\lambda}}$ be the parabolic subgroup of $G$ such
that the stablizer $W_{\check{\lambda}}$ of $\check{\lambda}$ in
the Weyl group $W$ of $G$ is the same as the Weyl group
$W_{P_{\check{\lambda}}}$ of $P_{\check{\lambda}}$. Let
$\calP_{\check{\lambda}}$ be the variety of parabolic subgroups of
$G$ of type $P_{\check{\lambda}}$. Let $\calO(\check{\lambda})$ be
the invertible sheaf on $\calP_{\check{\lambda}}$ such that
$\Gamma(\calP_{\check{\lambda}},\calO(\check{\lambda}))^*$ the
irreducible $G$-module of highest weight $\check{\lambda}$. Let
$\xi\in\frakg$ be any regular element, and
$\calP_{\check{\lambda}}^\xi$ be the corresponding Springer fiber
(which is a finite subscheme of $\calP_{\check{\lambda}}$). One
can prove that the natural map
\begin{equation}\label{f.d.}\Gamma(\calP_{\check{\lambda}},\calO(\check{\lambda}))\to\Gamma(\calP_{\check{\lambda}}^\xi,\calO(\check{\lambda})|_{\calP_{\check{\lambda}}^\xi})\end{equation}
is always surjective. It is an isomorphism if and only if
$\check{\lambda}$ is minuscule weight of $G$. It is clear that
(\ref{springer fiber}) is the affine counterpart of the above
statement. Therefore, it suggests to us to call $\Lambda$ (as well
as $\Lambda+\omega_{i_\gamma}$) a minuscule weight of
$\hat{\frakg}$ when $\frakg$ is simply-laced. Observe that in
non-simply-laced case, it seems no weights of $\hat{\frakg}$
should be called minuscule. (One should consider the twisted
affine algebras.)

Let us also point out that in finite dimensional story, the
isomorphism (\ref{f.d.}) for $\check{\lambda}$ minuscule also
reflects the fact that the Schubert variety
$\overline{\Gr}_{^LG}^{\check{\lambda}}$ for the Langlands dual
group $^LG$ is smooth. From (\ref{springer fiber}), one also
expects that the Schubert variety corresponding to $\Lambda$ in
the double affine Grassmannian of the Langlands dual group of
$\hat{G}_\K$ should also be smooth in some sense.
\end{s}

\subsection{Contents} The paper is organized as follows.

In \S \ref{Recollection}, we collect various facts related to
affine Grassmannians that are needed in the sequel. In \S
\ref{Affine Grassmannians}, we recall some geometry of affine
Grassmannians. In \S \ref{flat deg}, we construct a flat
degeneration of
$\overline{\Gr}_G^\lambda\times\overline{\Gr}_G^\mu$ to
$\overline{\Gr}_G^{\lambda+\mu}$. While it is not difficult to
produce such a flat family, it is not trivial to prove that the
special fiber is reduced. It is based on the the tensor structure
of affine Demazure modules, first discovered in \cite{FL} (Theorem
1 of \emph{loc. cit.}) by combinatoric methods. We reprove their
result here in a purely algebro-geometrical way (Theorem
\ref{tensor structure}). In \S \ref{fp}, we show that the
embedding $\Gr_T\to \Gr_G$ identifies $\Gr_T$ as the $T$-fixed
point subscheme of $\Gr_G$. In \S \ref{BW}, we prove a Borel-Weil
type theorem for the nonneutral components of the affine
Grassmannians.

\S \ref{main proof} is devoted to the proof of our main theorem,
Theorem \ref{Main Theorem}. In \S \ref{first reduction}, we reduce
the full theorem to the special cases where $\lambda$ is a
fundamental coweight, so that the geometry of the corresponding
Schubert variety is relatively simple. In \S \ref{proof}, we prove
the theorem for fundamental coweights of algebraic groups of type
$A$ and $D$. We will also have a brief discussion on some partial
results for the algebraic groups of type $E$. In \S \ref{simple
app}, we prove \ref{Smooth locus} as a simple application of our
main theorem.

In \S \ref{Boson}, we return to our motivation of the paper. In \S
\ref{Hei}, we deduce from the main theorem that
$L(\Lambda+\iota\omega_{i_\gamma})$ is isomorphic to
$V^\gamma_{R_G}$ as $\hat{\frakt}$-modules if $G$ is of type
$A,D,E$. Then we recall some basic ingredients of lattice
factorization algebras and affine Kac-Moody factorization algebras
in \S \ref{ver}. Finally, we reprove the FKS isomorphism as well
as Theorem \ref{Vertex operator modules} in \S \ref{FKS}.

\subsection{Notation and Conventions} Throughout this paper, we will work over the base field $\C$.
However, all results remain true over any algebraically closed
field of characteristic 0. A $\C$-algebra will always be assumed
to be associative, commutative and unital.

If $X$ is a scheme (or a space) over some base $S$, and $S'\to S$
is a morphism, we denote $X_{S'}(\mbox{ or }X|_{S'})=X\times_S S'$
the base change of $X$ to $S'$.

If $G$ is a group scheme, $\F$ is a $G$-torsor over some base, and
$X$ is a $G$-scheme, we write $\F\times^G X$ for the associated
product.

Let $\K=\C((t))$ and $\calO=\C[[t]]$. If $G$ is an algebraic
group, we will denote by $G_\calO$ the group scheme whose
$\C$-points are $G(\calO)$ and by $G_\K$ the ind-group whose
$\C$-points are $G(\K)$. The neutral connected component of $G_\K$
is denoted by $G^0_\K$.

If $G$ is a reductive group, we will denote $\Lambda_G$ (resp.
denote $R_G$, resp. choose $\Lambda_G^+$) the coweight lattice
(resp. the coroot lattice, resp. dominant coweights) of $G$ and
$\check{?}$ the dually named object corresponding to $?$. If
$\check{\nu}\in\check{\Lambda}_G^+$ is a dominant weight, we
denote $V^{\check{\nu}}$ the irreducible representation of $G$ of
highest weight $\check{\nu}$. However, if $G$ is a torus, then
$V^{\check{\nu}}$ is 1-dimensional, and will be denoted as
$\C^{\check{\nu}}$. The Weyl group of $G$ is denoted by $W$.

If $\frakg$ is a simple Lie algebra over $\C$, we will denote by
$I$ the set of vertices of the Dynkin diagram of $\frakg$. Then
fundamental coweight (resp. fundamental weight, resp. simple
coroot, resp. simple root) of $\frakg$ corresponding to $i\in I$
is denoted (or rather, chosen) to be $\omega_i$ (resp.
$\check{\omega}_i$, resp. $\alpha_i$, resp. $\check{\alpha}_i$).
The highest root of $\frakg$ is denoted by $\check{\theta}$ and
the corresponding coroot by $\theta$. Remark that $\theta$ is
usually not the highest coroot. The invariant bilinear form
$(\,,\,)$ on $\frakg$ is normalized so that the corresponding
bilinear form $(\,,\,)^*$ on the dual of Cartan subalgebra of
$\frakg$ satisfies that the square of the length of the long root
is 2. Then the invariant form induces an isomorphism from the
Cartan subalgebra to its dual, which is denoted by $\iota$. The
untwisted affine Kac-Moody algebra associated to
$(\frakg,(\,,\,))$ is denoted by $\hat{\frakg}$. Denote by $i_0$
the extra vertex in the affine Dynkin diagram. The fundamental
weight of $\hat{\frakg}$ corresponding to $i_0$ is denoted by
$\Lambda$. Denote by $K$ the central element in $\hat{\frakg}$
such that $\Lambda(K)=1$. Denote by $\check{\delta}$ the imaginary
root of $\hat{\frakg}$ such that
$\check{\alpha}_0=\check{\delta}-\check{\theta}$ is the simple
root corresponding to $i_0$.

\subsection{Acknowledgement}
The author is very grateful to his advisor, Edward Frenkel, for
many stimulating discussions and careful reading of the early
draft. Without his encouragement, this paper would have never been
written up. The author also thanks Joel Kamnitzer and Zhiwei Yun
for very useful discussions.

\section{Affine Grassmannians}\label{Recollection}
We recall the definition and the basic properties of the affine
Grassmannian associated to an algebraic group in \S 1.1. Then we
study the tensor structure of affine Demazure modules in \S 1.2,
the fixed point subscheme of the affine Grassmannian in \S 1.2 and
the Borel-Weil theorem in \S 1.3.
\subsection{Affine Grassmannians}\label{Affine Grassmannians}
\begin{s}
The affine Grassmannian $\Gr_G$ of \emph{any} affine algebraic
group $G$ is defined to be the \emph{fppf} quotient
$G_\K/G_\calO$. Without loss of generality, we could and will
assume that $G$ is connected in the rest of the paper. We collect
here some facts most relevant to our application in this paper,
and refer to \cite{BD2} Section 4.5, \cite{BLS} and \cite{LS} for
a general discussion of affine Grassmannians.
\end{s}

\begin{thm}\label{basic fact}
(1)$\Gr_G$ is an ind-scheme of ind-finite type, and ind-projective
if $G$ is reductive;

(2)$\Gr_G$ is reduced if and only if $\Hom(G,\G_m)=0$.

(3)There is a bijection $\pi_0(\Gr_G)=\pi_1(G)$, and different
connected components of $\Gr_G$ are isomorphic as ind-schemes.

(4)If $G$ is simple and simply-connected, then
$\mbox{Pic}(\Gr_G)=\Z$.
\end{thm}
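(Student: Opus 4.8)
The plan is to prove Theorem~\ref{basic fact} by treating each of the four statements in turn, building the general structure theory of $\Gr_G$ from the reductive and semisimple cases.

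For part (1), I would first establish that $\Gr_G$ is an ind-scheme by realizing it as an increasing union of closed subschemes of finite type. The standard approach is to use a faithful representation $G \hookrightarrow GL_n$, which induces a locally closed embedding $\Gr_G \hookrightarrow \Gr_{GL_n}$; one then reduces to the case $G = GL_n$, where $\Gr_{GL_n}$ is described concretely as a moduli space of $\calO$-lattices in $\K^n$. The lattices are filtered by the integer $N$ with $t^N \calO^n \subset L \subset t^{-N}\calO^n$, and each such filtration piece is a closed subscheme of an ordinary (finite-dimensional) Grassmannian, hence of finite type. For ind-projectivity when $G$ is reductive, I would note that each of these finite-type pieces is in fact \emph{projective} (it sits inside a product of usual Grassmannians as a closed subscheme, cut out by the lattice condition $tL \subset L$), so the union is ind-projective.

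For part (2), the reducedness criterion, the key is to analyze the reduced structure versus the scheme structure of the \emph{fppf} quotient $G_\K/G_\calO$. I would use that $\Hom(G,\G_m)=0$ is equivalent to $G$ having no nontrivial characters, which for connected $G$ means the derived group behavior controls things. The idea is that a character $\chi: G \to \G_m$ produces, via the valuation $\K^* \to \Z$, a nonreduced ``winding number'' structure: morally, the obstruction to reducedness is a copy of the formal completion coming from $\Hom(G,\G_m)\otimes(\K^*/\calO^*)$. When $\Hom(G,\G_m)=0$ this obstruction vanishes and the quotient is reduced; conversely a nontrivial character makes the component group carry infinitesimal thickening. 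This is the most delicate point since it requires genuinely working with the fppf sheafification rather than just the reduced points.

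For part (3), I would identify the connected components by the map $G_\K \to \pi_0(G_\K)$ and show $\pi_0(\Gr_G) = \pi_0(G_\K)/\pi_0(G_\calO)$. Since $G_\calO$ is connected for $G$ connected (the disc is contractible, so maps from it deform to the constant map), $\pi_0(\Gr_G) = \pi_0(G_\K)$, and a direct computation via the Kottwitz-type homomorphism or via $H^1$ of the punctured disc gives $\pi_0(G_\K) \cong \pi_1(G)$. That different components are isomorphic follows by translating: choosing a representative $g \in G_\K$ lying in a nonneutral component, left multiplication by $g$ gives an isomorphism of ind-schemes from the neutral component $\Gr_G^0$ onto the component it lands in.

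For part (4), computing $\Pic(\Gr_G) = \Z$ when $G$ is simple and simply-connected, the main obstacle lies here. The strategy is to produce the ample generator $\calL_G$ explicitly and show every line bundle is a power of it. I would first reduce to the neutral component (which is all of $\Gr_G$ in the simply-connected case since $\pi_1(G)=0$, consistent with part (3)). Then I would exhibit a line bundle via the central extension of $G_\K$, or equivalently pull back $\calO(1)$ from the Pl\"ucker embedding coming from a faithful minuscule-type representation. To see there are no other line bundles and that this one is a generator rather than a proper power, I would use the Borel-Weil-type identification (referenced in the introduction) relating $\Gamma(\Gr_G, \calL_G^{\otimes k})$ to the level $k$ vacuum module, together with the fact that the group of line bundles is detected by its action on the affine Weyl group orbit through the base point; simple-connectedness forces the level to range over all of $\Z$ with the generator at level one. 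The hard part is ruling out fractional or torsion classes, which ultimately rests on the computation of the second cohomology $H^2(\Gr_G,\Z) = \Z$ for the simply-connected simple case, reflecting that the affine flag variety has a one-dimensional Picard contribution from the single affine simple root $\check{\alpha}_0$ not already accounted for by the finite-type Picard group.
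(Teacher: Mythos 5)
First, a point of context: the paper never proves Theorem \ref{basic fact} at all --- it is stated as a collection of known facts, with the reader referred to \cite{BD2} Section 4.5, \cite{BLS} and \cite{LS}. So your proposal cannot be matched against an internal argument; it has to stand on its own as a reconstruction of the literature. Parts (1) and (3) of your sketch do follow the standard lines (lattice model for $GL_n$, Kottwitz-type identification $\pi_0(G_\K)\cong\pi_1(G)$, translation between components), but there is a gap already in (1): a \emph{locally closed} sub-ind-scheme of an ind-projective ind-scheme need not be ind-projective, so after embedding $\Gr_G\hookrightarrow \Gr_{GL_n}$ you cannot conclude by observing that the filtration pieces of $\Gr_{GL_n}$ are projective. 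What is needed is that the embedding is \emph{closed}, and this is exactly where reductivity enters: for $G$ reductive the homogeneous space $GL_n/G$ is affine (Matsushima's criterion), and it is this affineness that upgrades the locally closed embedding to a closed one. Your sketch never actually uses the reductivity hypothesis, which signals the missing step.

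The more serious gap is in part (2). The direction ``nontrivial character $\Rightarrow$ nonreduced'' can indeed be run roughly as you indicate, by pulling back the nonreducedness of $\Gr_{\G_m}$ (cf.\ Remark \ref{formal part}). But for the converse --- $\Hom(G,\G_m)=0$ implies $\Gr_G$ reduced --- your proposal says only that ``this obstruction vanishes,'' which is a restatement of the claim, not a proof. Reducedness of $G_\K$ (equivalently of $\Gr_G$) for semisimple $G$ is a genuine theorem (\cite{BD2} Theorem 4.5.1; see also \cite{LS} and \cite{F}): one must show that the fppf quotient, defined a priori only as a sheaf, has reduced coordinate rings on each finite-type piece, and all known proofs require substantial input (the lattice description plus descent for $SL_n$, or uniformization/factorization techniques in general). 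Nothing in your sketch supplies such a mechanism, and no formal argument of the kind you gesture at can work, precisely because the statement fails the moment a character exists. Part (4) is closer to viable --- $\Pic$ injects into $H^2$ on each Schubert variety (these are projective and rational, so the exponential sequence applies), $H^2(\Gr_G;\Z)\cong\Z$ for $G$ simple and simply connected by loop-group topology, and the level-one bundle is the generator --- but to complete it you would still need to justify $\Pic(\Gr_G)=\lim\limits_{\longleftarrow}\Pic(\overline{\Gr}_G^\lambda)$, rule out torsion, and show the candidate generator restricts to the ample generator on each finite-dimensional piece rather than to a proper power.
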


\begin{rmk}\label{formal part}
In the paper, we will consider the scheme structure of $\Gr_G$. By
(2) of above theorem, if $G$ is a connected semisimple algebraic
group, $\Gr_G$ is reduced. On the other hand, if $G=T$ is a torus,
then $(\Gr_T)_{red}=X_*(T):=\Hom(\G_m,T)$ is a discrete space, but
$\Gr_T$ itself is nonreduced. Indeed, the connected component of
$1\in T(\K)/T(\calO)$ is the formal group with Lie algebra
$\frakt(\K)/\frakt(\calO)$, where $\frakt$ is the Lie algebra of
$T$.
\end{rmk}

\begin{s}\label{orbits of Gr}
Assume that $G$ is reductive. We choose $T\subset G$, a maximal
torus. Recall that $\Gr_G$ is stratified by the $G_\calO$-orbits,
labelled by $\Lambda_G^+$. For any $\lambda\in
\Lambda_G=\Hom(\G_m,T)$, a choice of a uniformizer $t\in\calO$
determines a closed point $t^\lambda\in G_\K$. Let $s^\lambda$ be
the image of $t^\lambda$ under the map $G_\K\to \Gr_G$. Then
$s^\lambda$ is a well defined point in $\Gr_G$, which does not
depend on the choice of the uniformizer $t$. Let
$\Gr_G^{\lambda}=G_\calO s^\lambda$. Remark that $\Gr_G^\lambda$
does not depend on the choice of the maximal torus. Then
$\Gr_G(\C)$ is the union of $\Gr_G^\lambda(\C)$ for
$\lambda\in\Lambda_G^+$. Let $\overline{\Gr}_G^{\lambda}$ its
closure in $\Gr_G$. It is known that
$\overline{\Gr}_G^\mu\subset\overline{\Gr}_G^\lambda$ if and only
if $\mu\leq\lambda$. If $\Gr_G$ is reduced, e.g. $G$ is
semisimple, then as ind-schemes,
$\Gr_G\cong\lim\limits_{\longrightarrow}\overline{\Gr}^\lambda_G$,
where the limit is taken over $\lambda\in\Lambda_G^+$.
\end{s}

\begin{s}
Let $G$ be a reductive group. As in \cite{BL2}, there is a moduli
interpretation of the affine Grassmannian $\Gr_G$. Fix a smooth
curve $X$, and a closed point $x\in X$. Denote $X^*=X-\{x\}$. Then
\[\Gr_{G,x}(R)=\{\F \mbox{ a } G\mbox{-torsor on } X_R, \beta: \F_{X^*_R}\to X^*_R\times G \mbox{ a trivialization on } X^*_R\}\]
is represented by an ind-scheme which is isomorphic to $\Gr_G$,
once an isomorphism $\calO_x\cong\calO$ is chosen, where $\calO_x$
is the complete local ring of $x$. Remark that if $T\subset G$ a
maximal torus is chosen, $s^\lambda$ is represented by the pair
$(\F,\beta)$, where $\F$ is a $T$-torsor over $X$, and $\beta$ is
as above, such that for any $\check{\nu}\in\check{\Lambda}_G$, the
induced invertible sheaf and trivialization are
\[\beta_\nu:\F\times^T\C^{\check{\nu}}\cong \calO_X(\langle\lambda,\check{\nu}\rangle x)\]
we could also attach for any closed point $x\in X$, the group
scheme $G_{\calO,x}\cong G_\calO$ as the group of trivializations
of the trivial $G$-torsor on the disc $\spec\calO_x$, and the
corresponding $G_{\calO,x}$-invariant subvarieties
$\Gr_{G,x}^\lambda\subset\overline{\Gr}_{G,x}^\lambda\subset
\Gr_{G,x}$.
\end{s}

\begin{s}\label{BD Grass}
If we allow the point $x$ to vary in previous construction, we
obtain a global version of the affine Grassmannian. Indeed, we can
construct much more sophisticated geometrical objects thanks to
the moduli interpretation of the affine Grassmannian. This is the
so-called Beilinson-Drinfeld Grassmannian (cf.
\cite{BD2},\cite{MV}). Denote the $n$ fold product of $X$ by
$X^n=X\times\cdots\times X$, and consider the functor
\[\Gr_{G,X^n}(R)=\left\{\begin{array}{l}(x_1,\ldots,x_n)\in X^n(R), \F \mbox{ a } G\mbox{-torsor on } X_R, \\ \beta_{(x_1,\ldots,x_n)} \mbox{ a trivialization of } \F \mbox{ on } X_R-\cup x_i\end{array}\right\}\]
Here we think of the points $x_i: \spec(R)\to X$ as subschemes of
$X_R$ by taking their graphs. This functor is represented by an
ind-scheme which is formally smooth over $X^n$. The one relevant
to our case is $\Gr_{G,X^2}$. Then, for a closed point $(x,y)\in
X^2$,
\[(\Gr_{G,X^2})_{(x,y)}\cong\left\{\begin{array}{ll} \Gr_{G,x}\times \Gr_{G,y} & \mbox{ if }x\neq y\\
                                                    \Gr_{G,x} & \mbox{ if }x=y\end{array}\right.\]

There is also a global version of $G_\calO$, which is
\[G_{\calO,X^n}(R)=\left\{\begin{array}{l}(x_1,\ldots,x_n)\in X^n(R), \gamma_{(x_1,\ldots,x_n)} \mbox{ a trivialization } \\ \mbox{ of the trivial } G\mbox{-torsor } \F_0 \mbox{ on } \widehat{\cup_i x_i}\end{array}\right\}\]
where $\widehat{\cup_i x_i}$ is the formal completion of $X_R$
along $x_1\cup\cdots\cup x_n$. $G_{\calO,X^n}$ is represented by a
formally smooth group scheme over $X^n$. Like $\Gr_{G,X^2}$, if
$x\neq y$, $(G_{\calO,X^2})_{(x,y)}\cong G_{\calO,x}\times
G_{\calO,y}$, and $(G_{\calO,X^2})_{(x,x)}\cong G_{\calO,x}$.
Furthermore, $G_{\calO,X^n}$ acts on $\Gr_{G,X^n}$ in the
following way. Let $(\F,\beta)\in \Gr_{G,X^n}$ and $\gamma\in
G_{\calO,X^2}$, we construct $(\F',\beta)=\gamma\cdot(\F,\beta)$
as follows. If $\F$ can be trivialized along $\widehat{\cup_i
x_i}$, then choose a trivialization and one obtains a transition
function on $\widehat{\cup_i x_i}-\cup_i x_i$. Modifying this
transition function by $\gamma$ and gluing $\F|_{X_R-\cup x_i}$
and $\F_0|_{\widehat{\cup_i x_i}}$ by this new transition
function, one obtains $(\F',\beta)$. If $\F$ cannot be trivialized
along $\widehat{\cup_i x_i}$, choose some faithfully flat $R\to
R'$, apply the same procedure and then use descent.
\end{s}

\begin{s}\label{global orbits}The $G_{\calO,X}$-orbits on $\Gr_{G,X}$ are parameterized
by $\Lambda_G^+$. This is just a direct global counterpart of
\ref{orbits of Gr}. We will denote the orbit corresponding to
$\lambda\in\Lambda_G^+$ by $\Gr_{G,X}^\lambda$ and its closure by
$\overline{\Gr}_{G,X}^\lambda$. Observe that there is a natural
action of $G_{\calO,X}$ on $\overline{\Gr}_{G,X}^\lambda$.
\end{s}

\begin{s}\label{g.o.}Next, we turn to the $G_{\calO,X^2}$-orbits on $\Gr_{G,X^2}$. They are parameterized by
$(\lambda,\mu)\in\Lambda_G^+\times\Lambda_G^+$. Choose a maximal
torus $T$ of $G$. For any
$(\lambda,\mu)\in\Lambda_G\times\Lambda_G$, let
$s^{\lambda,\mu}\in \Gr_{G,X^2}(X^2)$ be $(\pr_1,\pr_2,\F,\beta)$,
where $\pr_1,\pr_2$ are two projections of $X^2$ to $X$,
$(\F,\beta)$ the $T$-torsor over $X^3$ with the trivialization
$\beta$ over $X^3-\Delta_{13}\cup\Delta_{23}$,
$\Delta_{13}=\{(x,y,x)\in X^3\}$ (resp. $\Delta_{23}=\{(y,x,x)\in
X^3\}$) being the graph of $\pr_1$ (resp. $\pr_2$), such that for
any $\check{\nu}\in\check{\Lambda}_G$, the induced invertible
sheaf are
\[\beta_\nu:\F\times^T\C^{\check{\nu}}\cong
\calO_{X^3}(\langle\lambda,\check{\nu}\rangle\Delta_{13}+\langle\mu,\check{\nu}\rangle\Delta_{23})\]
Then $s^{\lambda,\mu}$ is a section of $\Gr_{G,X^2}\to X^2$.
Denote $\Gr_{G,X^2}^{\lambda,\mu}=G_{X^2,\calO}s^{\lambda,\mu}$.
Then $\Gr_{G,X^2}(\C)$ is the union of
$\Gr_{G,X^2}^{\lambda,\mu}(\C)$ for
$(\lambda,\mu)\in\Lambda_G^+\times\Lambda_G^+$. Let
$\overline{\Gr}_{G,X^2}^{\lambda,\mu}$ be the closure of
$\Gr_{G,X^2}^{\lambda,\mu}$. It is a folklore that
$\overline{\Gr}_{G,X^2}^{\lambda,\mu}$ gives a flat degeneration
of $\overline{\Gr}_G^\lambda\times\overline{\Gr}_G^\mu$ to
$\overline{\Gr}_G^{\lambda+\mu}$. As far as the author knows, no
written proof is available. We will prove a slightly different
version of this folklore in Proposition \ref{flat family}, using
Theorem \ref{tensor structure}, which is originally due to
\cite{FL}, Theorem 1.
\end{s}

\begin{s}\label{line bundle}
Assume that $G$ is simple and that $X$ is complete in this
subsection. Let $Bun_{G,X}$ be the moduli stack of principal
$G$-bundles on $X$. Then there are natural morphisms $\pi:\Gr_G\to
Bun_{G,X}$ and $\pi_n:\Gr_{G,X^n}\to Bun_{G,X}$ by simply
forgetting the trivializations. First, assume that $G$ is
simply-connected. Then it is known that
$\mbox{Pic}(Bun_{G,X})\cong\Z$ (cf. \cite{BLS}), and one of the
generators $\calL$ is ample. It is known that $\pi^*\calL$ on
$\Gr_G$ is just $\calL_G$ defined in \ref{basic fact}. Therefore,
one also denotes the invertible sheaf $\pi_n^*\calL$ on
$\Gr_{G,X^n}$ by $\calL_G$. It is clear from the definition that
over $\Gr_{G,X^2}$
\[\calL_G\otimes\C_{(x,y)}\cong\left\{\begin{array}{ll} \calL_G\boxtimes\calL_G  \mbox{ on } \Gr_{G,x}\times \Gr_{G,y} & \mbox{ if }x\neq y\\
                                                    \calL_G \mbox{ on } \Gr_{G,x} & \mbox{ if }x=y\end{array}\right.\]
where $\C_{(x,y)}$ is the skyscraper sheaf at $(x,y)$. Now if $G$
is not simply-connected, we have:
$\pi_0(\Gr_{G,X^2})\cong\pi_1(G)$, and different connected
components of $\Gr_{G,X^2}$ are isomorphic as ind-schemes. Denote
$\calL_G$ the invertible sheaf on $\Gr_{G,X^2}$, the restrictions
of which to each component are all isomorphic to the one on the
neutral component, which is in turn isomorphic to
$\calL_{\tilde{G}}$, where $\tilde{G}$ is the simply-connected
cover of $G$.
\end{s}

\subsection{A flat degeneration of Schubert varieties}\label{flat deg}
\begin{s}
Let $k$ be a positive integer. Then
$H^0(\overline{\Gr}^\lambda_G,\calL_G^{\otimes k})^*$ is a
$G_\calO$-module (and therefore a $G$-module), called the affine
Demazure module (of level $k$). The first main theorem of
\cite{FL} (Theorem 1) claims
\end{s}
\begin{thm}\label{tensor structure}
$H^0(\overline{\Gr}_G^{\lambda+\mu},\calL_G^{\otimes k})^*\cong
H^0(\overline{\Gr}_G^\lambda,\calL_G^{\otimes k})^*\otimes
H^0(\overline{\Gr}_G^\mu,\calL_G^{\otimes k})^*$ as $G$-modules.
\end{thm}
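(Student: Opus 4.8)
The plan is to deduce the tensor-product decomposition of affine Demazure modules from the flat degeneration discussed in \ref{g.o.}, rather than by the combinatorial route of \cite{FL}. The key geometric object is the Beilinson-Drinfeld Grassmannian $\Gr_{G,X^2}$ together with its closed subscheme $\overline{\Gr}_{G,X^2}^{\lambda,\mu}$. First I would fix a smooth complete curve $X$ and form the family $\overline{\Gr}_{G,X^2}^{\lambda,\mu}\to X^2$. Over the open locus $\{x\neq y\}$ its fibers are $\overline{\Gr}_G^\lambda\times\overline{\Gr}_G^\mu$ by the factorization property recalled in \ref{BD Grass}, while over the diagonal the fiber is $\overline{\Gr}_G^{\lambda+\mu}$. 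I would then consider the line bundle $\calL_G^{\otimes k}$ on $\Gr_{G,X^2}$, whose fiberwise restriction is computed in \ref{line bundle}: on the generic fiber it is $\calL_G^{\otimes k}\boxtimes\calL_G^{\otimes k}$, and on the special (diagonal) fiber it is $\calL_G^{\otimes k}$.

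The heart of the argument is a cohomology-and-base-change comparison along this family. Let $p\colon\overline{\Gr}_{G,X^2}^{\lambda,\mu}\to X^2$ be the projection and consider the pushforward $p_*(\calL_G^{\otimes k})$. If I can show that the family is flat over $X^2$ with \emph{reduced} (indeed, having the expected geometric) fibers, and that the higher direct images $R^ip_*(\calL_G^{\otimes k})$ vanish for $i>0$, then $p_*(\calL_G^{\otimes k})$ is a locally free sheaf on $X^2$ whose formation commutes with base change. Restricting to a generic point $(x,y)$ with $x\neq y$ gives, via the Künneth formula,
\[
H^0\bigl(\overline{\Gr}_G^\lambda\times\overline{\Gr}_G^\mu,\calL_G^{\otimes k}\boxtimes\calL_G^{\otimes k}\bigr)\cong H^0(\overline{\Gr}_G^\lambda,\calL_G^{\otimes k})\otimes H^0(\overline{\Gr}_G^\mu,\calL_G^{\otimes k}),
\]
while restricting to a diagonal point gives $H^0(\overline{\Gr}_G^{\lambda+\mu},\calL_G^{\otimes k})$. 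Local freeness forces these two spaces to have equal dimension and, by semicontinuity together with the specialization of sections, to be canonically isomorphic. Dualizing yields the asserted isomorphism of vector spaces.

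To upgrade this to an isomorphism of $G$-modules, I would exploit the global group scheme $G_{\calO,X^2}$ acting on $\Gr_{G,X^2}$ and preserving $\overline{\Gr}_{G,X^2}^{\lambda,\mu}$ (see \ref{global orbits} and \ref{g.o.}). This action is compatible with the $\calL_G$-linearization, so $p_*(\calL_G^{\otimes k})$ carries a $G_{\calO,X^2}$-equivariant structure; specializing to the diagonal the action factors through the single-point group $G_{\calO,x}\cong G_\calO$, and hence through $G$, while off the diagonal it factors through $G_{\calO,x}\times G_{\calO,y}$, recovering the tensor-product $G$-action on the dual. Because the isomorphism of global sections arises from a single equivariant locally free sheaf on $X^2$, the induced identification on fibers is automatically $G$-equivariant.

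The main obstacle is the reducedness of the special fiber: producing the flat family over $X^2$ is routine, but showing that its diagonal fiber is exactly the reduced $\overline{\Gr}_G^{\lambda+\mu}$ (and not some nonreduced thickening) is the crucial and delicate point, and is precisely what makes the equality of dimensions work. I expect to establish this by combining the flatness of $p$ with a Frobenius-splitting or normality argument for Schubert varieties in $\Gr_G$, together with the vanishing $R^ip_*(\calL_G^{\otimes k})=0$ for $i>0$; these two facts let me control the Hilbert polynomial of the fibers and conclude that the scheme-theoretic special fiber is reduced of the expected dimension. Once reducedness is in hand, the cohomological comparison above closes the proof.
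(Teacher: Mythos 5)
Your plan is the ``folklore'' degeneration argument that the paper itself declines to use as a starting point: in \ref{g.o.} the author states explicitly that the flat degeneration of $\overline{\Gr}_G^\lambda\times\overline{\Gr}_G^\mu$ to $\overline{\Gr}_G^{\lambda+\mu}$ via $\overline{\Gr}_{G,X^2}^{\lambda,\mu}$ is folklore with no written proof, and that it will be \emph{deduced from} Theorem \ref{tensor structure}, not used to prove it. The genuine gap is exactly the point you flag and postpone: reducedness of the scheme-theoretic fiber of $\overline{\Gr}_{G,X^2}^{\lambda,\mu}$ over the diagonal. Your proposed fix is circular. Flatness plus vanishing of higher cohomology gives you that $\dim H^0$ of the scheme-theoretic special fiber equals $\dim H^0(\overline{\Gr}_G^\lambda,\calL_G^{\otimes k})\cdot\dim H^0(\overline{\Gr}_G^\mu,\calL_G^{\otimes k})$; but to conclude that the special fiber carries no nilpotents you must compare this number with $\dim H^0(\overline{\Gr}_G^{\lambda+\mu},\calL_G^{\otimes k})$ for the \emph{reduced} subscheme, and that equality of dimensions is precisely the theorem you are trying to prove. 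Indeed this is exactly how the paper argues, in the reverse logical order: Theorem \ref{tensor structure} is established first, and only then used in Proposition \ref{flat family} to show that the ideal sheaf $\mathcal J$ of $\overline{\Gr}_{G,p}^{\lambda+\mu}$ in the special fiber has no sections after ample twist, hence vanishes. Nor is a Frobenius-splitting argument a routine citation here: splitting of the Schubert varieties themselves (which gives their normality and reducedness) says nothing about the scheme-theoretic limit; one would need a splitting of the total space of the family compatible with its special fiber, which is a substantial independent construction, not something available off the shelf at the level of this paper.

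The paper's own proof sidesteps the reducedness problem entirely by replacing the orbit-closure family with a convolution family: $\overline{\Gr}_{G,X}^\lambda\tilde{\times}\overline{\Gr}_{G,p}^\mu$, built as the associated bundle $\mathcal P|_{\overline{\Gr}_{G,X}^\lambda}\times^{G_{\calO,p}}\overline{\Gr}_{G,p}^\mu$ of a $G_{\calO,p}$-torsor $\mathcal P$. Its fiber over $p$ is the twisted product $\overline{\Gr}_{G,p}^\lambda\tilde{\times}\overline{\Gr}_{G,p}^\mu$, which is \emph{automatically} reduced, being a fibration with reduced base and reduced fibers; so no degeneration-theoretic reducedness statement is ever needed. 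The price is that this special fiber is not $\overline{\Gr}_G^{\lambda+\mu}$ itself but a partial Bott--Samelson resolution of it via the convolution map $m_p$, and the missing identification $H^0(\overline{\Gr}_{G,p}^\lambda\tilde{\times}\overline{\Gr}_{G,p}^\mu,m_p^*\calL_G^{\otimes k})\cong H^0(\overline{\Gr}_{G,p}^{\lambda+\mu},\calL_G^{\otimes k})$ is supplied by a known theorem (\cite{Ku}, Theorem 2.16). To rescue your route you would have to either import that same convolution input, or produce an independent computation of $\dim H^0(\overline{\Gr}_G^{\lambda+\mu},\calL_G^{\otimes k})$ (for instance via the Demazure character formula, which is essentially the combinatorial path of \cite{FL} that this paper is deliberately avoiding).
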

The proof of this theorem in \emph{loc. cit.} is of combinatoric
flavor. Here we give a purely algebro-geometrical proof.
\begin{proof}Fix a closed point $p\in X$. We consider the following ind-scheme over $X$.
\[\Gr_{G,X}\tilde{\times}\Gr_{G,p}(R)=\left\{\begin{array}{l}x\in X(R), \F_1, \F_2 \mbox{ two } G\mbox{-torsors on } X_R,\\ \beta_1 \mbox{ a trivialization of } \F_1 \mbox{ on } X_R-x, \beta_2:\F_2|_{(X-p)_R}\cong\F_1|_{(X-p)_R}\end{array}\right\}\]
We have the projection
\[p:\Gr_{G,X}\tilde{\times}\Gr_{G,p}\to \Gr_{G,X}\]
sending $(x,\F_1,\F_2,\beta_1,\beta_2)$ to $(x,\F_1,\beta_1)$.
This map realizes $\Gr_{G,X}\tilde{\times}\Gr_{G,p}$ as a
fibration over $\Gr_X$, with fibers isomorphic to $\Gr_{G,p}$.

Indeed, there is a $G_{\calO,p}$-torsor $\mathcal P$ over
$\Gr_{G,X}$ given by
\[\mathcal P(R)=\left\{\begin{array}{l}x\in X(R), \F \mbox{ a } G\mbox{-torsora on } X_R,\\ \beta_1 \mbox{ a trivialization of } \F \mbox{ on } X_R-x, \beta_2\mbox{ a trivialization of } \F \mbox{ on } \hat{p}\end{array}\right\}\]
Then it is easy to see that $\mathcal P\times^{G_{\calO,p}}
\Gr_{G,p}\cong \Gr_{G,X}\tilde{\times}\Gr_{G,p}$. Observe that
$\mathcal P|_{X-p}$ is a indeed a trivial $G_{\calO,p}$-torsor,
with the section $\Gr_{G,X}|_{X-p}\to\mathcal P|_{X-p}$ given by
$(x,\F,\beta)\mapsto(x,\F,\beta,\beta|_{\hat{p}})$. Therefore,
\[\Gr_{G,X}\tilde{\times}\Gr_{G,p}|_{X-p}\cong \Gr_{G,X-p}\times \Gr_{G,p}\]
On the other hand, it is clear
\[\Gr_{G,p}\tilde{\times}\Gr_{G,p}:=\Gr_{G,X}\tilde{\times}\Gr_{G,p}|_p\cong G_{\K_p}\times^{G_{\calO_p}}\Gr_{G,p}\]

Now we denote
\[\overline{\Gr}_{G,X}^\lambda\tilde{\times}\overline{\Gr}_{G,p}^\mu=\mathcal P|_{\overline{\Gr}_{G,X}^\lambda}\times^{G_{\calO,p}}\overline{\Gr}_{G,p}^\mu\]
This is a fibration over $\overline{\Gr}_{G,X}^\lambda$ with
fibers isomorphic to $\overline{\Gr}_{G,p}^\mu$. Furthermore,
\[\overline{\Gr}_{G,X}^\lambda\tilde{\times}\overline{\Gr}_{G,p}^\mu|_{X-p}\cong\overline{\Gr}_{G,X-p}^\lambda\times\overline{\Gr}_{G,p}^\mu\]
Since $\overline{\Gr}_{G,X}^\lambda$ is a fibration over $X$ with
fibers isomorphic to $\overline{\Gr}_{G}^\lambda$, we obtain that
$\overline{\Gr}_{G,X}^\lambda\tilde{\times}\overline{\Gr}_{G,p}^\mu$
is flat over $X$, and that the special fiber
\[\overline{\Gr}_{G,p}^\lambda\tilde{\times}\overline{\Gr}_{G,p}^\mu:=\overline{\Gr}_{G,X}^\lambda\tilde{\times}\overline{\Gr}_{G,p}^\mu|_p\]
is a fibration over $\overline{\Gr}_{G,p}^\lambda$ with fibers
isomorphic to $\overline{\Gr}_{G,p}^\mu$. In particular, the
special fiber is reduced.

Observe that we have the natural map
\[m:\Gr_{G,X}\tilde{\times}\Gr_{G,p}\to \Gr_{G,X^2}|_{X\times p}\]
by sending $(x,\F_1,\F_2,\beta_1,\beta_2)$ to
$(x,p,\F_2,\beta_1\circ\beta_2)$. This is an isomorphism away from
$p$ and over $p$, it is just the usual convolution
\[m_p:G_{\K_p}\times^{G_{\calO_p}}\Gr_{G,p}\to \Gr_{G,p}\]
Observe that over the special fiber
\[m_p:\overline{\Gr}_{G,p}^\lambda\tilde{\times}\overline{\Gr}_{G,p}^\mu\to\overline{\Gr}_{G,p}^{\lambda+\mu}\]
is indeed a partial Bott-Samelson resolution.

Recall the invertible sheaf $\calL_G^{\otimes k}$ on
$\Gr_{G,X^2}$. Then by \ref{line bundle}, the restriction of
$m^*\calL_G^{\otimes k}$ to
$\Gr_{G,X}\tilde{\times}\Gr_{G,p}|_{X-p}\cong \Gr_{G,X-p}\times
\Gr_{G,p}$ is just $\calL_G^{\otimes k}\boxtimes\calL_G^{\otimes
k}$. Since
$\overline{\Gr}_{G,X}^\lambda\tilde{\times}\overline{\Gr}_{G,p}^\mu$
is a flat family over $X$, we obtains
\[H^0(\overline{\Gr}_{G,x}^\lambda,\calL_G^{\otimes k})\otimes H^0(\overline{\Gr}_{G,p}^\mu,\calL_G^{\otimes k})\cong H^0(\overline{\Gr}_{G,p}^\lambda\tilde{\times}\overline{\Gr}_{G,p}^\mu,m_p^*\calL_G^{\otimes k})\]
Finally, the theorem follows from the well-known that
$H^0(\overline{\Gr}_{G,p}^\lambda\tilde{\times}\overline{\Gr}_{G,p}^\mu,m_p^*\calL_G^{\otimes
k})\cong H^0(\overline{\Gr}_{G,p}^{\lambda+\mu},\calL_G^{\otimes
k})$ (e.g. \cite{Ku} Theorem 2.16).
\end{proof}

\begin{s}\label{f.f.}We still fix a point $p\in X$. Recall the variety
$\overline{\Gr}_{G,X^2}^{\lambda,\mu}$ from \ref{g.o.}. Denote the
reduced base change scheme by $\overline{\Gr}_{G,X\times
p}^{\lambda,\mu}:=(\overline{\Gr}_{G,X^2}^{\lambda,\mu}|_{X\times
p})_{red}$. We prove that
\end{s}

\begin{prop}\label{flat family} $\overline{\Gr}_{G,X\times p}^{\lambda,\mu}$ is a scheme flat over $X\cong X\times p$, whose
fiber over $x\neq p$ is
$\overline{\Gr}_{G,x}^\lambda\times\overline{\Gr}_{G,p}^\mu$ and
whose fiber over $p$ is $\overline{\Gr}_{G,p}^{\lambda+\mu}$.
\end{prop}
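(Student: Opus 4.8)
The plan is to compare $\overline{\Gr}_{G,X\times p}^{\lambda,\mu}$ with the fibration $\overline{\Gr}_{G,X}^\lambda\tilde{\times}\overline{\Gr}_{G,p}^\mu$ already studied in the proof of Theorem \ref{tensor structure}, and to transport the flatness and the fiber identifications across the convolution map $m$. First I would restrict the map $m:\Gr_{G,X}\tilde{\times}\Gr_{G,p}\to\Gr_{G,X^2}|_{X\times p}$ to the Schubert piece, obtaining a map $\overline{\Gr}_{G,X}^\lambda\tilde{\times}\overline{\Gr}_{G,p}^\mu\to\overline{\Gr}_{G,X^2}^{\lambda,\mu}|_{X\times p}$. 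Away from $p$ this map is an isomorphism, so on the open locus $X-p$ it identifies $\overline{\Gr}_{G,X-p}^\lambda\times\overline{\Gr}_{G,p}^\mu$ with the generic fiber, which is exactly the asserted generic fiber $\overline{\Gr}_{G,x}^\lambda\times\overline{\Gr}_{G,p}^\mu$. Over $p$, the map specializes to the partial Bott--Samelson resolution $m_p:\overline{\Gr}_{G,p}^\lambda\tilde{\times}\overline{\Gr}_{G,p}^\mu\to\overline{\Gr}_{G,p}^{\lambda+\mu}$, whose image is precisely $\overline{\Gr}_{G,p}^{\lambda+\mu}$; this pins down the special fiber set-theoretically.

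The next step is to establish that $\overline{\Gr}_{G,X\times p}^{\lambda,\mu}$ is flat over $X$ with the \emph{reduced} fibers claimed. Since $\overline{\Gr}_{G,X\times p}^{\lambda,\mu}$ is defined as the reduced closure, it is integral (being the closure of the irreducible orbit $\Gr_{G,X^2}^{\lambda,\mu}$), and any integral scheme dominating the smooth curve $X$ via a morphism with equidimensional fibers is automatically flat over $X$. I would argue flatness this way, using that the generic fiber $\overline{\Gr}_{G,x}^\lambda\times\overline{\Gr}_{G,p}^\mu$ and the closed fiber $\overline{\Gr}_{G,p}^{\lambda+\mu}$ have the same dimension $\dim\overline{\Gr}_G^\lambda+\dim\overline{\Gr}_G^\mu$ (the defining property of the degeneration). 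To identify the special fiber as a \emph{scheme}, not merely as a set, I would show that the scheme-theoretic fiber is reduced: push forward along $m_p$ and use that $m_p$ is a birational proper map from the smooth resolution $\overline{\Gr}_{G,p}^\lambda\tilde{\times}\overline{\Gr}_{G,p}^\mu$ (already known to be reduced, indeed a fibration over $\overline{\Gr}_{G,p}^\lambda$ with reduced fibers) onto the normal variety $\overline{\Gr}_{G,p}^{\lambda+\mu}$, so that $m_{p*}\calO\cong\calO_{\overline{\Gr}_{G,p}^{\lambda+\mu}}$ and the target is reduced.

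The main obstacle I anticipate is controlling the scheme structure of the special fiber of $\overline{\Gr}_{G,X\times p}^{\lambda,\mu}$, since we have taken a reduced base change and must verify this does not destroy flatness or introduce embedded/nonreduced components in the fiber over $p$. The clean way around this is to exploit Theorem \ref{tensor structure}: the tensor-product identity of global sections forces the two candidate special fibers to have matching Hilbert polynomials, and combined with the set-theoretic identification via $m_p$ and the normality of $\overline{\Gr}_{G,p}^{\lambda+\mu}$, this rules out nonreducedness and embedded points. Concretely, I would use the flat family $\overline{\Gr}_{G,X}^\lambda\tilde{\times}\overline{\Gr}_{G,p}^\mu$, whose special fiber is reduced and whose sections satisfy the factorization $H^0(\overline{\Gr}_{G,x}^\lambda,\calL_G^{\otimes k})\otimes H^0(\overline{\Gr}_{G,p}^\mu,\calL_G^{\otimes k})\cong H^0(\overline{\Gr}_{G,p}^{\lambda+\mu},\calL_G^{\otimes k})$, to conclude that the reduced special fiber of $\overline{\Gr}_{G,X\times p}^{\lambda,\mu}$ is exactly $\overline{\Gr}_{G,p}^{\lambda+\mu}$ with its reduced structure. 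Flatness then follows because the total space is integral and the fibers are equidimensional over the smooth base $X$.
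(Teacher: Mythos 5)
Your proposal is correct and follows essentially the same route as the paper: flatness comes from torsion-freeness over the smooth curve (the paper packages this as the scheme-theoretic image of the formally smooth group scheme $G_{\calO,X^2}|_{X\times p}$, you as dominance of an integral orbit closure), and the reducedness of the special fiber is forced exactly as you say, by combining flatness with Theorem \ref{tensor structure} to match $\dim H^0(\cdot,\calL_G^{\otimes k})$ and then killing the ideal sheaf $\mathcal J$ of $\overline{\Gr}_{G,p}^{\lambda+\mu}$ inside the scheme-theoretic fiber via global generation of $\mathcal J\otimes\calL_G^{\otimes k}$. Two cosmetic blemishes that do not affect the argument: the convolution space $\overline{\Gr}_{G,p}^\lambda\tilde{\times}\overline{\Gr}_{G,p}^\mu$ is not smooth (it is only a partial Bott--Samelson resolution, a fibration with possibly singular base and fibers), and equidimensionality of fibers is superfluous for flatness, since an integral scheme dominating a smooth curve is automatically flat over it.
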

\begin{proof} We give another interpretation of
$\overline{\Gr}_{G,X\times p}^{\lambda,\mu}$. Recall the section
$s^{\lambda,\mu}$ of $\Gr_{G,X^2}$ from \ref{g.o.}. The
restriction $s^{\lambda,\mu}|_{X\times p}$ gives a section of
$\Gr_{G,X^2}|_{X\times p}$ over $X\times p$, also denoted by
$s^{\lambda,\mu}$. The group scheme $\mathcal
G:=G_{\calO,X^2}|_{X\times p}$ acts on $\Gr_{G,X^2}|_{X\times p}$.
Then $\overline{\Gr}_{G,X\times p}^{\lambda,\mu}$ is the closure
of the orbit $\mathcal G s^{\lambda,\mu}$ in
$\Gr_{G,X^2}|_{X\times p}$. In other words, $s^{\lambda,\mu}$
defines a morphism $\mathcal G\to \Gr_{G,X^2}|_{X\times p}$ by
\[\varphi:\mathcal G=\mathcal G\times_X X\stackrel{id\times s^{\lambda,\mu}}{\longrightarrow}\mathcal G\times_X \Gr_{G,X^2}|_{X\times p}\longrightarrow \Gr_{G,X^2}|_{X\times p}\]
Then $\overline{\Gr}_{G,X\times p}^{\lambda,\mu}$ is the
scheme-theoretic image of this morphism. That is,
$\calO_{\overline{\Gr}_{G,X\times p}^{\lambda,\mu}}$ is the image
sheaf of the comorphism $\calO_{\Gr_{G,X^2}|_{X\times
p}}\to\varphi_*\calO_{\mathcal G}$, and therefore, a subsheaf of
$\varphi_*\calO_{\mathcal G}$. Since $\mathcal G$ is formally
smooth over $X$, the local parameter of $X$ is not a zero divisors
in $\varphi_*\calO_{\mathcal G}$, nor in
$\calO_{\overline{\Gr}_{G,X\times p}^{\lambda,\mu}}$. This proves
that $\overline{\Gr}_{G,X\times p}^{\lambda,\mu}$ is flat over
$X$.

It is easy to see that
\[((\overline{\Gr}_{G,X\times p}^{\lambda,\mu})_x)_{red}=\left\{\begin{array}{ll}\overline{\Gr}_{G,x}^\lambda\times\overline{\Gr}_{G,p}^\mu &x\neq p\\
\overline{\Gr}_{G,p}^{\lambda+\mu} &x=p\end{array}\right.\] Away
from $p$, $\overline{\Gr}_{G,X\times p}^{\lambda,\mu}$ is
\'{e}tale locally trivial over $X-p$, and therefore,
$(\overline{\Gr}_{G,X\times
p}^{\lambda,\mu})_x=\overline{\Gr}_{G,x}^\lambda\times\overline{\Gr}_{G,p}^\mu$
for $x\neq p$. (Or if one assumes that $X=\A^1$, then the family
is in fact trivial.) Now $\overline{\Gr}_{G,p}^{\lambda+\mu}$ is
the closed subscheme of $(\overline{\Gr}_{G,X\times
p}^{\lambda,\mu})_p$ defined by the nilpotent radical. We have the
exact sequence of sheaves
\[0\to\mathcal J\to \calO_{(\overline{\Gr}_{G,X\times p}^{\lambda,\mu})_p}\to\calO_{\overline{\Gr}_{G,p}^{\lambda+\mu}} \to 0\]
Tensoring the ample invertible sheaf $\calL_G^{\otimes k}$
constructed in \ref{line bundle}, where $k$ is sufficiently large,
one obtains
\[0\to H^0((\overline{\Gr}_{G,X\times p}^{\lambda,\mu})_p,\mathcal J\otimes\calL_G^{\otimes k})\to H^0((\overline{\Gr}_{G,X\times p}^{\lambda,\mu})_p,\calL_G^{\otimes k})\to H^0(\overline{\Gr}_{G,p}^{\lambda+\mu},\calL_G^{\otimes k}) \to 0\]
By the flatness of $\overline{\Gr}_{G,X\times p}^{\lambda,\mu}$
and Theorem \ref{tensor structure},
$H^0((\overline{\Gr}_{G,X\times
p}^{\lambda,\mu})_p,\calL_G^{\otimes k})$ and
$H^0(\overline{\Gr}_{G,p}^{\lambda+\mu},\calL_G^{\otimes k})$ are
both isomorphic to $H^0(\overline{\Gr}_G^\lambda,\calL_G^{\otimes
k})\otimes H^0(\overline{\Gr}_G^\mu,\calL_G^{\otimes k})$. Whence,
$H^0((\overline{\Gr}_{G,X\times p}^{\lambda,\mu})_p,\mathcal
J\otimes\calL_G^{\otimes k})=0$. This implies $\mathcal J=0$ since
$\mathcal J\otimes\calL_G^{\otimes k}$ are generated by global
sections.
\end{proof}

\subsection{$T$-fixed point subscheme of the affine
Grassmannian}\label{fp}
\begin{s}
For any group scheme $G$ over $\C$ operating on a presheaf $Y$ on
$\mathbf{Aff}_\C$, there is the notion of the fixed point
subfunctor $Y^G\subset Y$ as defined in \cite{DG}
\[Y^G(R):=\{x\in Y(R)| \mbox { for all } R\to R',\mbox{ and  } g\in G(R'),
gx_{R'}=x_{R'} \}\] where $x_{R'}$ denote the image of $x$ under
the natural restriction $Y(R)\to Y(R')$. It is easy to check that
if $Y$ is a sheaf, then $Y^G$ is a subsheaf. Something that is
less trivial (cf. \cite{DG}, II, \S 1, Theorem 3.6, (d)) is the
following
\end{s}

\begin{lem}
If $Y$ is a separated $\C$-scheme, then $Y^G$ is a closed
subscheme of $Y$.
\end{lem}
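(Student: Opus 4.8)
We must show that the fixed-point subfunctor $Y^G$ of a separated $\C$-scheme $Y$ under a group scheme $G$ is representable by a closed subscheme of $Y$.

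**Plan.**

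The plan is to reduce the representability of $Y^G$ to a closed-immersion statement about the equality locus of two morphisms, exploiting separatedness of $Y$ through the diagonal. First I would recast the defining condition functorially. An $R$-point $x \in Y(R)$ lies in $Y^G(R)$ precisely when, for every $R \to R'$ and every $g \in G(R')$, the two $R'$-points $g \cdot x_{R'}$ and $x_{R'}$ of $Y$ coincide. The cleanest way to package the universal quantifier over $R'$ and $g$ is to test it against the universal case: the $R$-point $x$ pulls back to an $R \otimes_\C \calO(G)$-point (or, functorially, to the base change along $R \to R \otimes_\C \calO(G)$ when $G$ is affine, or more generally after passing to $Y \times G$), and one compares the action map with the projection.

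Concretely, I would form the two morphisms
\[
a,\ p : Y \times_\C G \longrightarrow Y,
\]
where $a$ is the action and $p$ is the projection to the first factor. The key observation is that $x \in Y(R)$ belongs to $Y^G(R)$ if and only if the induced morphism $x \times \mathrm{id}_G : \spec(R) \times_\C G \to Y \times_\C G$ equalizes $a$ and $p$; the passage to all $R'$ and all $g \in G(R')$ is subsumed because $\spec(R) \times_\C G$ is the universal such test object (every pair $(R', g)$ factors through it). Thus $Y^G$ is exactly the functor of $R$-points $x$ such that $a \circ (x \times \mathrm{id}) = p \circ (x \times \mathrm{id})$, i.e. the functor represented by the equalizer of $a$ and $p$ — but pulled back in a way that quantifies over the whole of $G$. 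The representability of this equalizer is where separatedness enters: the equalizer of $a,p$ is the fiber product $(Y \times_\C G) \times_{(a,p),\ Y \times_\C Y,\ \Delta_Y} Y$, and since $Y$ is separated, $\Delta_Y : Y \to Y \times_\C Y$ is a closed immersion; hence the equalizer $E \hookrightarrow Y \times_\C G$ is a closed subscheme.

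**The main obstacle.** The hard part is \emph{not} producing a closed subscheme $E$ of $Y \times_\C G$ — that is immediate from separatedness — but rather showing that the ``descent along the projection'' $Y \times_\C G \to Y$ carries this closed equalizer condition down to a genuine closed subscheme of $Y$ whose functor of points is $Y^G$. In other words, the condition $a \circ (x \times \mathrm{id}) = p \circ (x \times \mathrm{id})$ must be interpreted as requiring the section $x \times \mathrm{id}_G$ to factor through $E$, and I must verify that ``$x \times \mathrm{id}_G$ factors through the closed subscheme $E$'' is itself a closed condition on $x$. This is where one uses that $G \to \spec \C$ is faithfully flat and quasi-compact (or at least that the relevant pushforward is well-behaved): the locus in $Y$ over which the pullback of the ideal sheaf $\I_E$ vanishes is closed, and by faithfully flat descent along $\spec(R)\times_\C G \to \spec(R)$ this locus represents exactly $Y^G$. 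I expect the technical care to concentrate on justifying that this vanishing locus is closed and represents the fixed-point functor on the nose, rather than merely the reduced or set-theoretic fixed points; the scheme-theoretic statement relies on the explicit functorial definition of $Y^G$ given in \ref{fp} together with the closedness of $\Delta_Y$, and this is precisely the content attributed to \cite{DG}.
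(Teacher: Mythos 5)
Your first two steps are correct, and they reproduce the skeleton of the standard argument (the paper itself gives no proof at all --- it quotes the statement from \cite{DG}, II, \S 1, Th\'eor\`eme 3.6(d) --- so the comparison must be with that standard argument). Indeed, an $x\in Y(R)$ lies in $Y^G(R)$ if and only if $a\circ(x\times\mathrm{id}_G)=p\circ(x\times\mathrm{id}_G)$ as morphisms $\spec(R)\times_\C G\to Y$, because every pair $(R\to R',\,g\in G(R'))$ is precisely a morphism $\spec R'\to\spec(R)\times_\C G$ and such morphisms cover $\spec(R)\times_\C G$; and the equalizer $E=(Y\times_\C G)\times_{(a,p),\,Y\times_\C Y,\,\Delta_Y}Y$ is a closed subscheme of $Y\times_\C G$ because $Y$ is separated. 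So $Y^G(R)$ is exactly the set of $x$ for which $x\times\mathrm{id}_G$ factors through $E$.

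The gap is in the third step, which you rightly isolate as the main obstacle but then justify with the wrong tool. Faithfully flat descent along $\spec(R)\times_\C G\to\spec(R)$ proves nothing here: the ideal $\I_E$ is not pulled back from $Y$, so there is no descent datum in play, and your assertion that ``the locus in $Y$ over which the pullback of $\I_E$ vanishes is closed'' is precisely the statement requiring proof. It is false for a general morphism in place of the projection $Y\times_\C G\to Y$: for the closed immersion $V(t)\hookrightarrow\A^1$ and the unit ideal, the locus of $T\to\A^1$ over which the pulled-back ideal vanishes (i.e.\ over which the fiber product is empty) is the \emph{open} subscheme $\A^1\setminus V(t)$. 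What actually makes the step work is that the base is a field, so that $\calO(G)$ (or the coordinate ring of each member of an affine cover of $G$) is a \emph{free} $\C$-module. Fixing a basis $\{e_i\}$, any section $f$ of $\I_E$ over $U\times_\C G$, with $U=\spec R\subset Y$ affine open, expands uniquely as $f=\sum_i f_i\otimes e_i$ with $f_i\in R$; since $R''\otimes_\C\calO(G)=\bigoplus_i R''e_i$ for every $R$-algebra $R''$, the image of $f$ vanishes in $R''\otimes_\C\calO(G)$ if and only if every coefficient $f_i$ maps to $0$ in $R''$. Hence the ideal $J\subset R$ generated by all coefficients of all sections of $\I_E$ defines a closed subscheme $V(J)\subset U$ with exactly the universal property required of $Y^G\cap U$, and these local constructions glue over an affine cover of $Y$. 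This coefficient-ideal argument (Demazure--Gabriel's ``essential freeness,'' automatic over a field) is the real content of the citation; flatness of $G$ is vacuous over $\C$, quasi-compactness is never used, and without the freeness the lemma --- stated, as here, with no hypothesis on $G$ whatsoever --- would not be within reach.
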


\begin{s}
Now assume that $G$ is a reductive algebraic group over $\C$. Let
$\iota:L\to G$ be a Levi subgroup of some parabolic subgroup of
$G$. Denote also by $\iota:\Gr_L\to \Gr_G$ the embedding induced
from $\iota:L\to G$. Denote $T=Z(L)^0$ the neutral connected
component of the center $Z(L)$ of $L$. Then $T$ is a torus of $G$,
and therefore acts on $\Gr_G$. It is well-known that the
$T(\C)$-fixed point set of $\Gr_G(\C)$ is just $\Gr_L(\C)$. The
following theorem claims that this even holds
scheme-theoretically.
\end{s}

\begin{thm}\label{Fixed-point}
The morphism $\iota: \Gr_L\to \Gr_G$ identifies $\Gr_L$ as the
$T$-fixed point subsheaf of $\Gr_G$.
\end{thm}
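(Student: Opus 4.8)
The plan is to prove the equality of functors $\Gr_L = (\Gr_G)^T$ by verifying it on $R$-points for every $\C$-algebra $R$, using the moduli interpretation of the affine Grassmannian from \ref{Affine Grassmannians}. Since $T$ acts on $\Gr_G$ and $\iota:\Gr_L\to\Gr_G$ is $T$-equivariant (with $T$ acting trivially on $\Gr_L$ because $T=Z(L)^0$ is central in $L$), the image of $\iota$ certainly lands in the fixed-point subsheaf, so one inclusion is automatic. The content is the reverse inclusion: every $R$-point of $\Gr_G$ fixed by $T$ in the functorial sense comes from $\Gr_L$. Because the lemma preceding the theorem already guarantees that $(\Gr_G)^T$ is a closed subscheme, it suffices to match the two as subfunctors of $\Gr_G$.

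**First I would** translate the fixed-point condition through the moduli description $\Gr_{G}(R)=\{(\F,\beta)\}$ with $\F$ a $G$-torsor on the disc and $\beta$ a trivialization on the punctured disc. A point of $(\Gr_G)^T$ is a pair $(\F,\beta)$ such that after every base change $R\to R'$, and for every $g\in T(R')$, acting by $g$ returns the same point of $\Gr_G(R')$. The natural strategy is to exploit the universality of the condition: rather than testing against arbitrary $R'$, I would test against the universal case by letting $R'=R\otimes_\C \C[T]$ (the coordinate ring of $T$) and $g$ the tautological $R'$-point of $T$. Fixedness under this single universal $g$ is equivalent to fixedness under all $g$, and it converts the stability condition into an honest equation between two $\Gr_G(R\otimes\C[T])$-points. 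Concretely, the $T$-action rescales the trivialization $\beta$ by $t^{\mathrm{cocharacter}}$-type data, so the fixed-point equation forces the torsor, together with its trivialization data across each weight space, to be compatible with the eigenspace decomposition of the $T$-action.

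**The key geometric step** is then to use $T$-equivariant structure to reduce the structure group from $G$ to $L$. A $T$-fixed $G$-torsor with trivialization, satisfying the universal stability equation, carries a reduction of structure group to the centralizer $C_G(T)=L$: the weight-space decomposition under the $T$-action on the adjoint bundle singles out the Levi, and the trivialization $\beta$ over the punctured disc, being $T$-fixed, lands in $L_\K\subset G_\K$ up to $G_\calO$. Unwinding $\Gr_G=G_\K/G_\calO$, this says precisely that the $R$-point lifts to $L_\K/L_\calO=\Gr_L$. I would phrase this carefully as a descent/reduction statement for torsors rather than just on $\C$-points, since the whole point of the theorem is that the identification $\Gr_L(\C)=\Gr_G(\C)^T$ holds at the level of schemes, i.e. as an equality of subfunctors on all $R$.

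**The hardest part will be** handling the scheme-theoretic (as opposed to set-theoretic) reduction from $G$ to $L$ in families over an arbitrary $R$, since $\Gr_T$ itself is nonreduced (Remark \ref{formal part}) and so the fixed-point subscheme genuinely remembers nilpotent directions. In particular one must verify that the universal stability equation, which a priori only sees the $\C$-points on the reduced locus, actually pins down the full scheme structure; this is where the freedom to base-change to $R\otimes\C[T]$ and the formal-smoothness of the relevant torsor stack are essential. I expect the cleanest route is to treat the neutral (formal) direction and the discrete $\pi_1$-direction separately: on the discrete part the claim reduces to the classical $\Gr_L(\C)=\Gr_G(\C)^T$, while on the infinitesimal/formal part one checks that the tangent-level and higher-order obstruction spaces to $T$-fixedness are exactly the $L$-reductions, matching $\Gr_L$ on the nose.
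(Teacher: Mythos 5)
Your framework (functorial fixed points, the automatic inclusion $\Gr_L\subset(\Gr_G)^T$, testing against the tautological point of $T(R\otimes_\C\C[T])$) is sound, but the proof has a genuine gap exactly where the theorem lives: the ``key geometric step'' asserting that the fixed-point equation produces a reduction of structure group of $\F$ to $L=C_G(T)$, so that $\beta$ ``lands in $L_\K\subset G_\K$ up to $G_\calO$,'' is not an argument --- it is a restatement of the conclusion. To extract an $L$-reduction scheme-theoretically you would need at least two ingredients you never supply: (a) a device allowing you to represent an $R$-point by an actual element $g\in G(R\hat{\otimes}\K)$ (the torsor need not be trivial on the disc for arbitrary $R$); and (b) a rigidity/conjugacy theorem for homomorphisms from the torus $T$ into the automorphism group scheme of $\F$ over $\spec (R\hat{\otimes}\calO)$ (SGA3-type theory of groups of multiplicative type), which is what would make the ``weight-space decomposition of the adjoint bundle'' actually yield an $L$-torsor inside $\F$ rather than just a Levi at each geometric point. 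Your closing paragraph concedes the scheme-theoretic difficulty, but the proposed remedy --- splitting into a ``discrete $\pi_1$-direction'' and a ``formal direction'' and checking that ``tangent-level and higher-order obstruction spaces to $T$-fixedness are exactly the $L$-reductions'' --- is circular: computing the infinitesimal structure of $(\Gr_G)^T$ and matching it with $\Gr_L$ is precisely the content of the theorem, and you give no mechanism for computing either side.

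For comparison, the paper supplies exactly these missing ingredients by working with group elements rather than torsors. First, since $\Gr_G$ is the \'etale sheafification of $R\mapsto G(R\hat{\otimes}\K)/G(R\hat{\otimes}\calO)$, it suffices to prove surjectivity of $\Gr_L(R)\to(\Gr_G)^T(R)$ for $R$ strictly Henselian local; this legitimizes choosing a representative $g$. Second, a lemma (proved via the flag variety $G/B$, projectivity, and vanishing of $H^1$ of the disc with coefficients in $B$ when $\Pic(\spec R)=0$) gives the Iwasawa-type decomposition $G(R((t)))=P(R((t)))G(R[[t]])$ for $P\supset L$ parabolic, so $g$ may be taken in $P(R((t)))$. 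Writing $g=g'g_1g_2$ with $g'\in L(R((t)))$, $g_1\in U_P(t^{-1}R[t^{-1}])$, $g_2\in U_P(R[[t]])$ --- the uniqueness of this unipotent factorization is what makes the argument valid over arbitrary $R$, nilpotents included --- the fixed-point condition forces $g_1=sg_1s^{-1}$ for all $s\in T(R)$, hence $g_1=1$ and $g\in L(R((t)))G(R[[t]])$. If you want to salvage your torsor-theoretic route, you would have to carry out step (b) via rigidity of tori and representability of transporters, showing that $\{f\in\F:\sigma(s)f=fs\mbox{ for all }s\in T\}$ is an $L$-torsor; that is a workable but genuinely heavier path, and it must be executed, not asserted.
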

\begin{proof} It is obvious that $\Gr_L$ is contained in
$(\Gr_G)^T$. So we prove the converse. Observe that $\Gr_G$ is can
be also obtained as the sheafification of the presheaf $R\mapsto
G(R\hat{\otimes}\K)/G(R\hat{\otimes}\calO)$ in the \'{e}tale
topology, it is enough to show for any $R$ strictly Henselian
local ring, $\Gr_L(R)\to (\Gr_G)^T(R)$ is surjective. We need the
following lemma, whose proof is communicated to the author by
Zhiwei Yun.

\begin{lem}
Let $R$ be a $\C$-algebra. If $\mathrm{Pic}(\mathrm{Spec} R)=0$,
then
\[G(R\hat{\otimes}\K)=B(R\hat{\otimes}\K)G(R\hat{\otimes}\calO)\]
\end{lem}

\begin{proof} Let $X=G/B$ be the flag variety of $G$ defined over
$\C$. Then for any $\C$-algebra $A$, there is an injective map of
sets $G(A)/B(A)\hookrightarrow X(A)$, since $G(A)/B(A)$ is the set
of isomorphism classes of pairs $(\F_0,\beta)$, where $\F_0$ is a
trivial $B$-torsor over $\spec A$ and $\beta: \F_0\to G$ is a
$B$-equivariant morphism, whereas $X(A)$ is the set of isomorphism
classes of pairs $(\F,\beta)$, where $\F$ a $B$-torsor over $\spec
A$ and $\beta: \F\to G$ is a $B$-equivariant morphism.

Now let $F=R\hat{\otimes}\K, S=R\hat{\otimes}\calO$. Then one has
$X(F)=X(S)$ because $X$ is projective and one can always make an
$S$-point out of an $F$-point by multiplying a common denominator
of coordinates. Therefore, the lemma holds if $G(S)/B(S)=X(S)$.
Clearly from previous description, it is enough to show that
$H^1(\spec S,B)=0$. Observe that $B$ can be filtered by normal
subgroups such that the associated quotients are $\G_m$s and
$\G_a$s. Since $\G_a$ is a coherent sheaf, $H^1(\spec S,\G_a)$ is
always trivial. To show $H^1(\spec S,\G_m)=0$, observe that any
$\G_m$-torsor $\F$ over $\spec S$ has a section when restricted to
$\spec R\hookrightarrow\spec S$ since $\mbox{Pic}(\spec R)=0$. Now
$\F$ is smooth over $\spec S$, so the section of $\F|_{\spec R}$
extends to a section of $\F$ over $\spec S$. Therefore, the lemma
follows.
\end{proof}

\noindent\emph{Conclusion of the proof of the theorem.} Fix a
uniformizer $t\in\calO$ for convenience. Let $P$ be a parabolic
subgroup of $G$ containing $L$. By above lemma
$G(R((t)))=P(R((t)))G(R[[t]])$ since $R$ is strictly Henselian.
Now let $x\in (\Gr_G)^T(R)$, one could assume that $x$ is
represented by an element $g\in P(R((t)))$. Then $x$ being a
$T$-fixed point is equivalent to $g^{-1}tg\in P(R[[t]])$ for any
$t\in T(R)$. Let $U_P$ be unipotent radical of $P$. Since $L\times
U_P\to P$ is an isomorphism (as $\C$-varieties), one could write
$g=g'g''$ with $g'\in L(R((t))), g''\in U_P(R((t)))$. Furthermore,
since $U_P$ is unipotent, $g''$ could be uniquely written as
$g''=g_1g_2$ with $g_1\in U_P(t^{-1}R[t^{-1}])$ and $g_2\in
U_P(R[[t]])$. Then $g^{-1}tg=g_2^{-1}g_1^{-1}tg_1g_2\in P(R[[t]])$
is equivalent $g_1=tg_1t^{-1}$ for any $t\in T(R)$. Therefore
$g_1=1$, and $x$ is represented by an element $g\in
L(R((t)))G(R[[t]])$, that is $x\in \Gr_L(R)$.
\end{proof}

\begin{cor}\label{T-fix point}
Let $\iota:T\to G$ be a maximal torus of $G$. Then $\iota:\Gr_T\to
\Gr_G$ identifies $\Gr_T$ as the $T$-fixed point subsheaf of
$\Gr_G$.
\end{cor}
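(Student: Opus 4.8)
The plan is to deduce this immediately from Theorem \ref{Fixed-point} by taking the Levi subgroup there to be the maximal torus $T$ itself; the only thing that really needs checking is that a maximal torus legitimately occurs as the object $Z(L)^0$ attached to a Levi $L$. First I would fix a Borel subgroup $B$ of $G$ containing $T$. Since $B$ is a parabolic subgroup of $G$ with Levi decomposition $B = T\ltimes U_B$, where $U_B$ is the unipotent radical, the maximal torus $T$ is precisely a Levi subgroup of the parabolic $B$. Hence $L=T$ is an admissible choice of Levi in the hypothesis of Theorem \ref{Fixed-point}, and the induced embedding $\iota:\Gr_L\to\Gr_G$ is exactly the embedding $\iota:\Gr_T\to\Gr_G$ of the statement.

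Next I would identify the torus acting in Theorem \ref{Fixed-point} for this choice. Since $T$ is connected and abelian, it coincides with its own center and with the neutral component thereof, so $Z(L)^0 = Z(T)^0 = T$. Thus the torus whose fixed-point subsheaf is computed by the theorem is literally the given maximal torus $T$, and no reinterpretation of the acting group is needed.

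Finally, applying Theorem \ref{Fixed-point} verbatim with $L=T$ identifies $\Gr_T=\Gr_L$ with the $T$-fixed point subsheaf $(\Gr_G)^T$, which is the desired assertion. I expect no genuine obstacle here: the entire substance has already been carried out in the proof of Theorem \ref{Fixed-point}, in particular the Iwasawa-type decomposition $G(R((t)))=P(R((t)))G(R[[t]])$ over a strictly Henselian $R$ together with the bookkeeping of the unipotent radical that forces $g_1=1$. The corollary is therefore a pure specialization, and the single point meriting attention is the structure-theoretic fact that $T$ really does arise as the center of a Levi subgroup of a parabolic of $G$ — equivalently, that $T$ is a Levi complement in a Borel $B$ — which is standard.
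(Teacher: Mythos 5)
Your proposal is correct and is exactly what the paper intends: the corollary is stated without proof as the specialization of Theorem \ref{Fixed-point} to $L=T$, a Levi of a Borel subgroup, with $Z(T)^0=T$ since $T$ is connected and abelian. Your write-up simply makes this routine specialization explicit, so there is nothing to add.
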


\begin{s}
Let $M=[L,L]$ be the derived group of a Levi subgroup $L$ of $G$.
Then $M$ is a semisimple subgroup of $G$. Let $T_M\subset T_G$ be
the maximal torus of $M$ and $G$. For any $s^\lambda\in \Gr_G$,
the morphism $M_\K\to \Gr_G$ given by $m\to m\cdot s^\lambda$
induces a closed embedding of $\Gr_M\to \Gr_G$ by \cite{MOV} Lemma
3.2. It is easy to see that $\Gr_M$ is $T_G$-invariant. We have
another corollary of Theorem \ref{Fixed-point}.
\end{s}

\begin{cor}\label{fixed-point in Levi}
Under the above assumption, $(\Gr_M)^{T_G}\cong (\Gr_M)^{T_M}$.
\end{cor}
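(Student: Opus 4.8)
The plan is to compare the two fixed-point subfunctors of $\Gr_M$ directly, exploiting the fact that $T_G$ is generated by $T_M$ together with the central torus $Z:=Z(L)^0$, and that $Z$ acts trivially on $\Gr_M$. First I would fix notation: after enlarging $T_M$ to a maximal torus $T_L$ of $L$ containing it, $T_L$ is also a maximal torus of $G$, so we may take $T_G=T_L$; then $T_M=T_G\cap M$ and the multiplication map $\phi:Z\times T_M\to T_G$ is an isogeny of tori, i.e.\ faithfully flat with finite kernel. Since $T_M\subseteq T_G$, the inclusion $(\Gr_M)^{T_G}\subseteq(\Gr_M)^{T_M}$ of subschemes is automatic: a point fixed by the larger group is a fortiori fixed by the smaller one.

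The next step is to check that $Z$ acts trivially on $\Gr_M$, even scheme-theoretically. Recall the $T_G$-equivariant embedding $\Gr_M\hookrightarrow\Gr_G$, $mM_\calO\mapsto m\cdot s^\lambda$. For $z\in Z(R')$ one has $z\cdot(m\cdot s^\lambda)=(zmz^{-1})\cdot(z\cdot s^\lambda)$; now $z$ centralizes $M$ (as $Z\subseteq Z(L)$ centralizes $L\supseteq M$), so $zmz^{-1}=m$, and $z\cdot s^\lambda=s^\lambda$ because $s^\lambda$ is $T_G$-fixed and $z\in Z\subseteq T_G$. Hence $z$ acts as the identity on $\Gr_M$ for every $R'$, as desired. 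Along the way one also notes that on $\Gr_M$ the conjugation action of $T_M\subseteq T_G$ coincides with the left-translation action of $T_M$ as a maximal torus of $M$ (the two differ by right multiplication by an element of $M_\calO$, which acts trivially on cosets), so that $(\Gr_M)^{T_M}$ is unambiguous.

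For the remaining inclusion $(\Gr_M)^{T_M}\subseteq(\Gr_M)^{T_G}$, take $x\in(\Gr_M)^{T_M}(R)$ and a $g\in T_G(R')$ for some $R\to R'$; I want $gx=x$. Because $\phi$ is faithfully flat, after a further fppf base change $R'\to R''$ we may write $g_{R''}=zt$ with $z\in Z(R'')$ and $t\in T_M(R'')$. Then $g_{R''}x_{R''}=z(tx_{R''})=zx_{R''}=x_{R''}$, using that $x$ is $T_M$-fixed and that $Z$ acts trivially. Since $\Gr_M$ is a sheaf, descending along the faithfully flat $R'\to R''$ yields $gx=x$ in $\Gr_M(R')$, whence $x\in(\Gr_M)^{T_G}(R)$. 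I expect the only real subtlety, and hence the crux of the argument, to be precisely this last step: the product decomposition $T_G=Z\cdot T_M$ holds only up to the finite kernel of $\phi$, so an arbitrary $R'$-point of $T_G$ need not factor as $zt$ on the nose, and one genuinely needs fppf descent (rather than a statement on $\C$-points) to promote the classical equality $(\Gr_M)^{T_G}(\C)=(\Gr_M)^{T_M}(\C)$ to an isomorphism of subschemes.
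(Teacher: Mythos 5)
Your proof is correct, but it takes a genuinely different route from the paper's at the crucial step. The paper treats this statement as a corollary of Theorem \ref{Fixed-point}: applying that theorem to the Levi $L$ with torus $Z(L)^0$ gives $\Gr_L=(\Gr_G)^{Z(L)^0}$, and (after reducing to $G$ simply connected) it identifies $\Gr_M$ with the reduced subscheme of a connected component of $\Gr_L$, so that $Z(L)^0$ acts trivially on $\Gr_M$; it then concludes via $T_G=Z(L)^0\cdot T_M$ that $(\Gr_M)^{T_G}\cong((\Gr_M)^{Z(L)^0})^{T_M}=(\Gr_M)^{T_M}$. You instead establish the triviality of the $Z(L)^0$-action on $\Gr_M$ by the direct computation $z\cdot(m\cdot s^\lambda)=(zmz^{-1})\cdot(z\cdot s^\lambda)=m\cdot s^\lambda$, using only that $Z(L)^0$ centralizes $M$ and fixes $s^\lambda$; this bypasses Theorem \ref{Fixed-point}, the simply-connectedness reduction, and the component bookkeeping for $\Gr_L$, so your argument is more elementary and self-contained. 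The final formal step --- deducing $T_G$-fixedness from $Z(L)^0$- and $T_M$-fixedness --- is common to both arguments, but you handle it more carefully than the paper does: since $Z(L)^0\times T_M\to T_G$ is only an isogeny, an $R'$-point of $T_G$ need not factor as $zt$ on the nose, and your fppf descent (using that $\Gr_M$ is an fppf sheaf, so restriction along a faithfully flat cover is injective) is exactly the needed justification, which the paper's one-line assertion glosses over. One small remark: when checking that $Z(L)^0$ acts trivially, you write an $R'$-point of $\Gr_M$ as a coset $mM_\calO$; an arbitrary point is only fppf-locally of this form, but since you are verifying an equality of points of a sheaf, checking it on such a cover suffices --- the same separatedness property you invoke later closes this gap.
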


\begin{proof} Recall the natural embedding $\Gr_L\subset \Gr_G$. For simplicity, we assume that $G$ is
simply-connected. (The general case is very similar.) Then $M$ is
simply-connected and therefore, $\Lambda_M=R_M$. The connected
components of $\Gr_L$ are labelled by elements in
$\Lambda_G/\Lambda_M$. Then the way of embedding of $\Gr_M$ into
$\Gr_G$ realized $\Gr_M$ as the reduced subscheme of the component
of $\Gr_L$ corresponding to $\lambda \mbox{ mod } \Lambda_M$.
Observe that $T_G=T_M\cdot Z(L)^0$, where $Z(L)^0$ is the neutral
component of the center of $L$. Therefore, $(\Gr_M)^{T_G}\cong
((\Gr_M)^{Z(L)^0})^{T_M}=(\Gr_M)^{T_M}$.
\end{proof}

\subsection{The Borel-Weil theorem}\label{BW} Assume
that $G$ is a simple algebraic group in this section. Let $\frakg$
be its Lie algebra, and $\hat{\frakg}$ be the untwisted affine
algebra associated to $\frakg$.

\begin{s}
Let $\Gr_G$ be the affine Grassmannian of $G$, and $\calL_G$ an
invertible sheaf on $\Gr_G$, which is the positive generator of
the Picard group of each connected component. (See Theorem 1.1.2
(3) and (4).) It is well-known that $\Gamma(\Gr_G,\calL_G)$ has a
natural $\hat{\frakg}$-module structure.
\end{s}

\begin{s}
Recall that the irreducible integrable representations of
$\hat{\frakg}$ are parameterized by $(k,\check{\nu})$, where $k$
is a positive integer, called the level, and $\check{\nu}$ is a
dominant integral weight of $\frakg$ such that
$(\check{\theta},\check{\nu})^*\leq k$. For such
$(k,\check{\nu})$, the irreducible integrable representation of
highest weight $(k\Lambda+\check{\nu})$ is denoted by
$L(k\Lambda+\check{\nu})$.
\end{s}

\begin{s}\label{minuscule coweight}
Recall that a fundamental coweight $\omega_i$ of $\frakg$ is
called minuscule if $\langle\omega_i,\check{\alpha}\rangle\leq 1$
for any $\check{\alpha}$ positive root of $\frakg$. We will also
include the zero coweight as a minuscule coweight. It can be shown
(for example, see \cite{Bour} exercises in Chap. VI) that for any
$\gamma\in \pi_1(G)\cong\Lambda_G/R_G, \gamma\neq 0$, there is a
unique $i_\gamma\in I$, such that: (i)
$\omega_{i_\gamma}\in\Lambda_G$; (ii)$\omega_{i_\gamma} \mbox{ mod
} R_G=\gamma$; (iii) $\omega_{i_\gamma}$ is a minuscule coweight.
If $\gamma=0$, we set the corresponding minuscule coweight
$\omega_{i_0}=0$.
\end{s}

We have the following Borel-Weil theorem

\begin{prop}\label{Borel-Weil}
Assumptions are as above. One has
\[\Gamma(\Gr_G,\calL_G^{\otimes k})^*\cong \bigoplus_{\gamma\in\pi_1(G)}L(k\Lambda+k\iota\omega_{i_\gamma})\]
as $\hat{\frakg}$-modules.
\end{prop}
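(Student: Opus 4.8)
The plan is to split $\Gr_G$ into its connected components, identify each summand of global sections with a single irreducible integrable module, and reduce everything to the known simply-connected affine Borel--Weil theorem by a translation argument. By Theorem \ref{basic fact}(3), $\Gr_G=\bigsqcup_{\gamma\in\pi_1(G)}\Gr_G^\gamma$, where $\Gr_G^\gamma$ is the component indexed by $\gamma\in\pi_1(G)\cong\Lambda_G/R_G$. Since $\pi_1(G)$ is finite and global sections of $\calL_G^{\otimes k}$ take a disjoint union to a product, dualizing gives
\[\Gamma(\Gr_G,\calL_G^{\otimes k})^*\cong\bigoplus_{\gamma\in\pi_1(G)}\Gamma(\Gr_G^\gamma,\calL_G^{\otimes k})^*.\]
The $\hat{\frakg}$-action on $\Gamma(\Gr_G,\calL_G^{\otimes k})$ exponentiates to an action of the neutral component $G_\K^0$, which preserves each $\Gr_G^\gamma$; hence every summand is a $\hat{\frakg}$-submodule, and it suffices to identify $\Gamma(\Gr_G^\gamma,\calL_G^{\otimes k})^*$ with $L(k\Lambda+k\iota\omega_{i_\gamma})$.

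For $\gamma=0$, the neutral component $\Gr_G^0$ is isomorphic to $\Gr_{\tilde G}$ for the simply-connected cover $\tilde G\to G$ (compare \ref{line bundle}), and under this isomorphism $\calL_G$ restricts to the positive generator $\calL_{\tilde G}$ of $\Pic(\Gr_{\tilde G})\cong\Z$ (Theorem \ref{basic fact}(4)). The classical affine Borel--Weil theorem of Kumar and Mathieu (\cite{Ku},\cite{Ma}) then yields $\Gamma(\Gr_G^0,\calL_G^{\otimes k})^*\cong\Gamma(\Gr_{\tilde G},\calL_{\tilde G}^{\otimes k})^*\cong L(k\Lambda)$, matching the claim since $\omega_{i_0}=0$.

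For $\gamma\neq 0$, I would use the minuscule representative $\omega_{i_\gamma}\in\Lambda_G$ of \ref{minuscule coweight}. As it lies in the coset $\gamma$, the point $t^{\omega_{i_\gamma}}\in T(\K)\subset G_\K$ sits in the $\gamma$-component, and left multiplication by it gives an isomorphism of ind-schemes $\ell_\gamma\colon\Gr_G^0\xrightarrow{\sim}\Gr_G^\gamma$. Since $\ell_\gamma$ carries the positive generator of $\Pic$ to the positive generator, $\ell_\gamma^*\calL_G\cong\calL_G$, so $\ell_\gamma^*$ is a linear isomorphism $\Gamma(\Gr_G^\gamma,\calL_G^{\otimes k})\cong\Gamma(\Gr_G^0,\calL_G^{\otimes k})$. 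This isomorphism is \emph{not} $\hat{\frakg}$-linear: it intertwines the two actions through the automorphism $\sigma_\gamma$ of $\hat{\frakg}$ given by conjugation by the lift of $t^{\omega_{i_\gamma}}$ to the central extension $\widehat{G_\K}$, which on $\frakg\otimes\C[t,t^{-1}]$ sends a root vector $X_{\check\alpha}\otimes t^n$ to $X_{\check\alpha}\otimes t^{n+\langle\omega_{i_\gamma},\check\alpha\rangle}$ and fixes the center. Because $\omega_{i_\gamma}$ is minuscule, the translation $t_{\omega_{i_\gamma}}$ in the extended affine Weyl group is a length-zero element, i.e.\ a diagram automorphism of the affine Dynkin diagram carrying the node $i_0$ to $i_\gamma$; thus $\sigma_\gamma$ preserves the affine Borel and sends the level-$k$ vacuum weight $k\Lambda$ to $k\Lambda+k\iota\omega_{i_\gamma}$. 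Twisting the irreducible module $L(k\Lambda)$ of the neutral component by $\sigma_\gamma$ therefore produces $L(k\Lambda+k\iota\omega_{i_\gamma})$, completing the identification.

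The main obstacle is the bookkeeping in this last step. One must check carefully that $\ell_\gamma^*\calL_G\cong\calL_G$ compatibly with the central extension, so that both modules carry level exactly $k$; and one must verify that the minuscule condition $\langle\omega_{i_\gamma},\check\alpha\rangle\leq 1$ is precisely what guarantees that $\sigma_\gamma$ is a genuine automorphism of the affine Dynkin diagram rather than a general extended-affine-Weyl element, so that the image of the vacuum vector is an extremal vector of an \emph{integrable irreducible} module. The weight computation $\sigma_\gamma(k\Lambda)=k\Lambda+k\iota\omega_{i_\gamma}$ (modulo the imaginary root $\check\delta$, which does not affect the isomorphism class) then follows from the standard translation formula together with the fact that the comark $a^\vee_{i_\gamma}=1$ at a minuscule node.
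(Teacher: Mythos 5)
Your overall route is the same as the paper's: decompose $\Gr_G$ into components, handle the neutral component by Kumar--Mathieu, and treat $(\Gr_G)^\gamma$ via translation by $t^{\omega_{i_\gamma}}$, tracking how conjugation $Ad_{t^{\omega_{i_\gamma}}}$ twists the $\hat{\frakg}$-structure. However, the step where you identify the twisted module rests on a false claim: the translation $t_{\omega_{i_\gamma}}$ is \emph{not} a length-zero element of the extended affine Weyl group. Its length is $\sum_{\check{\alpha}>0}|\langle\omega_{i_\gamma},\check{\alpha}\rangle|=\langle\omega_{i_\gamma},2\check{\rho}\rangle>0$; the length-zero subgroup $\Omega\cong\pi_1(G)$ consists of elements $\tau_\gamma=t_{\omega_{i_\gamma}}w$ with a nontrivial correction $w=w_{0,i_\gamma}w_0\in W$ (product of the longest elements of the stabilizer of $\omega_{i_\gamma}$ and of $W$). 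Consequently $\sigma_\gamma=Ad_{t^{\omega_{i_\gamma}}}$ does \emph{not} preserve the affine Borel: for example the affine simple root vector $e_{i_0}=f_{\check{\theta}}\otimes t$ is sent to $f_{\check{\theta}}\otimes t^{1-\langle\omega_{i_\gamma},\check{\theta}\rangle}=f_{\check{\theta}}$, which lies in $\frakn_-$, precisely because minusculeness forces $\langle\omega_{i_\gamma},\check{\theta}\rangle=1$. So the image of the vacuum line under your twist is not a highest-weight line for the standard Borel, and the weight computation "$\sigma_\gamma(k\Lambda)=k\Lambda+k\iota\omega_{i_\gamma}$" does not make sense as stated.

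The conclusion is nonetheless correct, and there are two ways to repair the argument. (a) Module-theoretically: write $t_{\omega_{i_\gamma}}=\tau_\gamma w^{-1}$ as above; twisting $L(k\Lambda)$ by $Ad_{n_w}$ for a lift $n_w\in\tilde{G}_\calO$ of $w$ gives an isomorphic module, because $L(k\Lambda)$ integrates to the central extension of $\tilde{G}_\K$, so this twist is inner; only the genuine diagram automorphism $\tau_\gamma$ survives, and it carries $L(k\Lambda_{i_0})$ to $L(k\Lambda_{i_\gamma})\cong L(k\Lambda+k\check{\omega}_{i_\gamma})$ (using the comark $1$, as you say). (b) Geometrically, which is the paper's route and avoids Borel-preservation altogether: what minusculeness actually buys is that $\omega_{i_\gamma}$ is a vertex of the fundamental alcove, so $Ad_{t^{\omega_{i_\gamma}}}\tilde{G}_\calO$ is the \emph{standard} hyperspecial parahoric obtained by deleting the vertex $i_\gamma$ of the affine Dynkin diagram; hence $(\Gr_G)^\gamma\cong\tilde{G}_\K/Ad_{t^{\omega_{i_\gamma}}}\tilde{G}_\calO$ is a partial Kac--Moody flag variety, and Kumar--Mathieu applied to it gives $\Gamma((\Gr_G)^\gamma,\calL_G^{\otimes k})^*\cong L(k\Lambda+k\check{\omega}_{i_\gamma})$ directly. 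Finally, note the proposition is stated for arbitrary simple $G$ (types $B$ and $C$ also have nontrivial $\pi_1$), so you still need the paper's closing observation that $\check{\alpha}_{i_\gamma}$ is always a long root, whence $\check{\omega}_{i_\gamma}=\iota\omega_{i_\gamma}$; your write-up silently conflates these two weights.
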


\begin{proof}Choose $T\subset G$
a maximal torus. Let $\tilde{G}\to G$ be the simply-connected
cover of $G$. Then $\pi_1(G)\cong \Lambda_G/R_G$ is also regarded
as a subgroup of $\tilde{G}$. For any $\gamma\in\pi_1(G)$, choose
$\lambda_\gamma\in\Lambda_G$ a lifting of $\gamma$, e.g.
$\lambda_\gamma=\omega_{i_\gamma}$. One also fixes a uniformizer
$t\in\calO$. From the exact sequence of \'{e}tale sheaves on
$\spec\K$
\[1\to\pi_1(G)\to\tilde{G}\to G\to 1\]
one obtains that
\[0\to \pi_1(G)\to\tilde{G}_\K\to G_\K\to H^1(\spec\K,\pi_1(G))\cong\pi_1(G)\to 0\]
and therefore
\[G_\K=\bigsqcup_{\gamma\in\pi_1(G)} t^{\lambda_\gamma}(\tilde{G}_\K/\pi_1(G))\]
Since $t^\lambda\in G(\K)$, $Ad_{t^\lambda}$ acts on
$\tilde{G}_\K/\pi_1(G)=G_\K^0$ by adjoint action. It is liftable
to a unique automorphism of $\tilde{G}_\K$, also denoted by
$Ad_{t^\lambda}$. Now
\[\Gr_G=\bigsqcup_{\gamma\in\pi_1(G)} (\Gr_G)^\gamma=\bigsqcup_{\gamma\in\pi_1(G)} \tilde{G}_\K/Ad_{t^{\lambda_\gamma}}\tilde{G}_\calO\]
Observe
$(\Gr_G)^\gamma=\tilde{G}_\K/Ad_{t^{\lambda_\gamma}}\tilde{G}_\calO$
for different $\gamma$'s are isomorphic as ind-varieties, but not
as $\tilde{G}_\K$-homogeneous spaces. Indeed,
$Ad_{t^{\lambda_\gamma}}\tilde{G}_\calO$ is the hyperspecial
parahoric subgroup in $\tilde{G}_\K$ obtained by deleting the
vertex $i_\gamma$ of the affine Dynkin diagram. Therefore,
$\Gamma((\Gr_G)^\gamma,\calL_G^{\otimes k})^*\cong
L(k\Lambda+k\check{\omega}_{i_\gamma})$ as $\hat{\frakg}$-modules
by \cite{Ku} and \cite{Ma}. Remark that in \cite{Ku} and
\cite{Ma}, the definition of the (partial) flag variety is
different from the definition here. However, the two definitions
are the same due to \cite{BL1} Theorem 7.7. Since
$\check{\alpha}_{i_\gamma}$ is always a long root for $\gamma\neq
0$,
$\iota\alpha_{i_\gamma}=\dfrac{2\check{\alpha}_{i_\gamma}}{(\check{\alpha}_{i_\gamma},\check{\alpha}_{i_\gamma})^*}=\check{\alpha}_{i_\gamma}$.
And therefore, $\check{\omega}_{i_\gamma}=\iota\omega_{i_\gamma}$
\end{proof}

\section{Proof of Theorem \ref{Main Theorem}}\label{main proof}
In this section, we will prove the main theorem. Our strategy is
first to reduce the full theorem to some special cases, where the
geometry of Schubert varieties is simple, and then to prove these
special cases. We will also discuss the smooth locus of Schubert
varieties in this section.
\subsection{First Reductions}\label{first reduction} Let $G$ be a simple algebraic
group, which is not assumed to be simply-laced in this section. We
will continue using the notations introduced in \S 1. Recall that
$(\overline{\Gr}_G^\lambda)^T$ is a finite scheme supported at
$\{s^{w\mu}|\mu\in\Lambda_G^+,\mu\leq\lambda,w\in W\}$. Let
$\I^\lambda\subset\calO_{\overline{\Gr}_G^\lambda}$ be the ideal
sheaf defining $(\overline{\Gr}_G^\lambda)^T$. For any
quasi-coherent sheaf $\F$ on $\Gr_G$, we will denote
$\F(n)=\F\otimes\calL_G^{\otimes n}$. We prove that

\begin{prop}\label{surjectivity}
For $k$ any positive integer, the natural morphism
$\calL_G^{\otimes
k}\to\calO_{(\overline{\Gr}_G^\lambda)^T}\otimes\calL_G^{\otimes
k}$ induces a surjective map
\[\Gamma(\overline{\Gr}_G^\lambda,\calL_G^{\otimes
k})\twoheadrightarrow\Gamma(\overline{\Gr}_G^\lambda,\calO_{(\overline{\Gr}_G^\lambda)^T}\otimes\calL_G^{\otimes
k})\]
\end{prop}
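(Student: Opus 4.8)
The plan is to prove surjectivity by reducing to a cohomology-vanishing statement and then exhibiting the relevant higher cohomology as zero. Let me write $Y=\overline{\Gr}_G^\lambda$ and $Z=(\overline{\Gr}_G^\lambda)^T$, with $\I=\I^\lambda$ the ideal sheaf of $Z$ in $Y$. The restriction map in question is the map on global sections induced by the short exact sequence of sheaves on $Y$
\[
0\to\I(k)\to\calL_G^{\otimes k}\to\calO_Z\otimes\calL_G^{\otimes k}\to 0.
\]
Taking the long exact sequence in cohomology, the cokernel of $\Gamma(Y,\calL_G^{\otimes k})\to\Gamma(Y,\calO_Z\otimes\calL_G^{\otimes k})$ injects into $H^1(Y,\I(k))$. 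Hence the whole proposition reduces to the single vanishing statement
\[
H^1(\overline{\Gr}_G^\lambda,\I^\lambda(k))=0\quad\text{for all }k\geq 1.
\]

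First I would reduce to the case $k=1$, or more precisely set up the vanishing for all positive $k$ uniformly, by using that $\calL_G$ is ample on $Y$. The cleanest route is Serre vanishing: since $\I^\lambda$ is a coherent sheaf on the projective variety $Y$ and $\calL_G$ is ample, $H^1(Y,\I^\lambda\otimes\calL_G^{\otimes k})=0$ for all sufficiently large $k$. The difficulty — and I expect this to be the main obstacle — is that the proposition asserts the vanishing for \emph{every} positive $k$, not merely large $k$. So the real content is a lower bound on the threshold, and one cannot get this from abstract ampleness alone; one must use special geometry of the Schubert variety $\overline{\Gr}_G^\lambda$ and of $\calL_G$.

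To handle all $k\geq 1$ I would exploit two structural facts already available. The Schubert variety $\overline{\Gr}_G^\lambda$ is Frobenius split (in characteristic $p$) / has rational singularities and is projectively normal with respect to $\calL_G$, and the line bundle $\calL_G$ restricted to it has no higher cohomology for the partial Bott–Samelson / Demazure resolution appearing in the proof of Theorem \ref{tensor structure}. Concretely, $\calL_G$ itself satisfies $H^i(\overline{\Gr}_G^\lambda,\calL_G^{\otimes k})=0$ for $i>0$ and all $k\geq 1$ (standard for Schubert varieties in the affine Grassmannian, and deducible from the Borel–Weil identification in Proposition \ref{Borel-Weil} together with the resolution of singularities used above). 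Granting this, the cohomology of $\I^\lambda(k)$ is controlled by the difference between $H^\bullet(Y,\calL_G^{\otimes k})$ and $H^\bullet(Z,\calO_Z\otimes\calL_G^{\otimes k})$; since $Z$ is finite, $H^i(Z,-)=0$ for $i>0$, so $H^1(Y,\I^\lambda(k))=0$ would follow the moment we know the restriction on $H^0$ is surjective for $i=0$ — which is circular, so this route alone is not enough.

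The noncircular path, and the one I would actually pursue, is to prove surjectivity directly by constructing enough sections. The $T$-fixed scheme $Z$ is supported at the points $s^{w\mu}$ with $\mu\leq\lambda$ and $w\in W$; its coordinate ring, twisted by $\calL_G$, is what Theorem \ref{Main Theorem} eventually identifies with the dual affine Demazure module. I would stratify $Z$ by the partial order on its support and argue by a filtration/induction on the $W$-orbits of weights $\mu$: at each stage one must produce, for every local section of $\calO_Z\otimes\calL_G^{\otimes k}$ supported at a given fixed point $s^{w\mu}$, a global section of $\calL_G^{\otimes k}$ over $\overline{\Gr}_G^\lambda$ whose restriction realizes it modulo the contribution of lower strata. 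The key inputs here are the $T$-weight decomposition of $\Gamma(\overline{\Gr}_G^\lambda,\calL_G^{\otimes k})$ coming from the $\hat{\frakt}$-action, and the fact that each $T$-fixed point is a smooth point of its own attracting cell, so the local evaluation map at $s^{w\mu}$ is surjective on the appropriate weight spaces. The main obstacle is controlling the scheme structure of $Z$ at each point — the fiber $\calO_{Z,s^{w\mu}}$ is in general nonreduced (as Remark \ref{formal part} already signals for $\Gr_T$), so one must show that global sections of $\calL_G^{\otimes k}$ separate not only the points but also the infinitesimal directions recorded in $\I^\lambda$. I expect this local surjectivity of jets to be the crux, and I would establish it by pulling back to the convolution resolution $m_p$ from the proof of Theorem \ref{tensor structure}, where the line bundle is globally generated and the $T$-fixed locus is transparent, and then pushing forward.
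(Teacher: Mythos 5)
Your first reduction (to $H^1(\overline{\Gr}_G^\lambda,\I^\lambda(k))=0$) is correct but, as you yourself observe, neither Serre vanishing nor the known vanishing of higher cohomology of $\calL_G^{\otimes k}$ can close it, so everything rests on your ``noncircular path.'' That path has a genuine gap at exactly the step you flag as the crux: you never give a mechanism forcing global sections to surject onto the full nonreduced local ring $\calO_{(\overline{\Gr}_G^\lambda)^T,s^{w\mu}}\otimes\calL_G^{\otimes k}$ at each fixed point. The tools you invoke cannot do this. Global generation or very ampleness of $\calL_G^{\otimes k}$ controls only jets of order at most one, whereas the length of $\calO_{(\overline{\Gr}_G^\lambda)^T,s^\mu}$ is in general large (the paper ultimately shows, in Corollary \ref{length}, that it equals $\dim V_\lambda(-\iota\mu)$, which is unbounded as $\lambda$ grows); a finite subscheme of projective space of length at least three need not impose independent conditions on the sections of $\calO(1)$, and nothing in your stratification by $W$-orbits produces the required sections ``modulo lower strata.'' Pulling back to the convolution resolution does not help either: surjectivity onto jets on $\overline{\Gr}_G^\lambda$ is not implied by any statement about the fixed locus upstairs, since the fixed-point scheme structure downstairs involves directions that the resolution does not see in the same way. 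Likewise, the $T$-weight decomposition alone is far too weak: $T$-equivariance of the evaluation map says nothing about its surjectivity on any given weight space. In short, your ``local surjectivity of jets'' \emph{is} the proposition, and the proposal contains no proof of it.

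The missing idea, which is how the paper proceeds, is to leave the single Schubert variety and exploit a symmetry that exists only on the whole affine Grassmannian. Using the surjectivity of the restriction maps $\Gamma((\overline{\Gr}_G^\lambda)^T,\calL_G^{\otimes k})\to\Gamma((\overline{\Gr}_G^\mu)^T,\calL_G^{\otimes k})$ for $\lambda\geq\mu$, one reduces to showing that $\Gamma(\Gr_G,\calL_G^{\otimes k})\to\Gamma(\Gr_T,\calL_G^{\otimes k})$ is surjective, i.e.\ dually that $\Gamma(\Gr_T,\calL_G^{\otimes k})^*\to\Gamma(\Gr_G,\calL_G^{\otimes k})^*$ is injective. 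The point is that this dual map is a morphism of modules over the Heisenberg algebra $\hat{\frakt}\subset\hat{\frakg}$ (note that $\hat{\frakt}$ does \emph{not} act on the sections over a single $\overline{\Gr}_G^\lambda$, which is why a finite-level approach like yours sees only the torus action), and $\Gamma(\Gr_T,\calL_G^{\otimes k})^*$ decomposes as a direct sum of Fock modules $\pi^k_{-k\lambda}$, each of which is \emph{irreducible}. Irreducibility converts surjectivity into mere non-vanishing: it suffices that each composite $\Gamma(\Gr_G,\calL_G^{\otimes k})\to\calO_{\Gr_T,s^\lambda}\otimes\calL_G^{\otimes k}|_{s^\lambda}$ be nonzero, and this is exhibited by the explicit extremal sections $\sigma_\lambda^k$ built from the linear form that is nonzero on $t^\lambda v_\Lambda$ and kills all other weight vectors. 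This representation-theoretic all-or-nothing step is precisely what replaces the jet-surjectivity your proposal leaves unproved.
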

\begin{proof} By Corollary \ref{T-fix point}, the natural embedding $\Gr_T\to \Gr_G$ identifies $\Gr_T$ as the
$T$-fixed subscheme of $\Gr_G$. Therefore,
\[\Gamma(\Gr_T,\calL_G^{\otimes
k})=\lim\limits_{\longleftarrow}\Gamma(\overline{\Gr}_G^\lambda,\calO_{(\overline{\Gr}_G^\lambda)^T}\otimes\calL_G^{\otimes
k})\] where the limit is taken over $\lambda\in\Lambda_G^+$. On
the other hand, we have \[\Gamma(\Gr_G,\calL_G^{\otimes
k})=\lim\limits_{\longleftarrow}\Gamma(\overline{\Gr}_G^\lambda,\calL_G^{\otimes
k})\] Since $\Gamma((\overline{\Gr}_G^\lambda)^T,\calL_G^{\otimes
k})\to\Gamma((\overline{\Gr}_G^\mu)^T,\calL_G^{\otimes k})$ is
surjective if $\lambda\geq\mu$, to prove the proposition, it is
enough to show that the natural map
\[\Gamma(\Gr_G,\calL_G^{\otimes k})\to\Gamma(\Gr_T,\calL_G^{\otimes
k})\] is surjective, or equivalently,
\[\Gamma(\Gr_T,\calL_G^{\otimes
k})^*\to\Gamma(\Gr_G,\calL_G^{\otimes k})^*\] is injective.

Observe that both $\Gamma(\Gr_T,\calL_G^{\otimes k})^*$ and
$\Gamma(\Gr_G,\calL_G^{\otimes k})^*$ have module structures over
the Heisenberg Lie algebra $\hat{\frakt}\subset\hat{\frakg}$,
which is the restriction of the central extension of
$\frakg\hat{\otimes}\K$ to $\frakt\hat{\otimes}\K$ (see
\ref{Heisenberg}). Furthermore, the map
$\Gamma(\Gr_T,\calL_G^{\otimes
k})^*\to\Gamma(\Gr_G,\calL_G^{\otimes k})^*$ is a
$\hat{\frakt}$-mod morphism. Since $\Gr_T$ is discrete,
\[\Gamma(\Gr_T,\calL_G^{\otimes
k})=\bigoplus_{\lambda\in\Lambda_G}\calO_{\Gr_T,s^\lambda}\otimes\calL_G^{\otimes
k}|_{s^\lambda}\] and \ref{formal part} implies that as
$\hat{\frakt}$-modules,
$(\calO_{\Gr_T,s^\lambda}\otimes\calL_G^{\otimes
k}|_{s^\lambda})^*\cong\pi^k_{-k\lambda}$ is the Fock
$\hat{\frakt}$-module as defined in \ref{Heisenberg}.

To prove the proposition, it is enough to show that the map
$\pi^k_{-k\lambda}\to\Gamma(\Gr_G,\calL_G^{\otimes k})^*$ is not
zero. Then it must be injective since $\pi^k_{-k\lambda}$ is
simple. Dually, it is enough to show
\[\Gamma(\Gr_G,\calL_G^{\otimes
k})\to\calO_{\Gr_T,s^\lambda}\otimes\calL_G^{\otimes
k}|_{s^\lambda}\] is not zero. However, this is clear. For any
$\lambda$, let $\sigma_\lambda$ be the linear form on
$\bigoplus_{\gamma\in\pi_1(G)}L(\Lambda+\iota\omega_{i_\gamma})$,
which is not zero along $t^\lambda v_\Lambda$, and zero on any
other weight vectors, where $t$ is some chosen uniformizer, and
$v_\Lambda$ is the line of highest weight vectors in $L(\Lambda)$.
(See Lemma \ref{ration curves} for the meaning of $t^\lambda
v_\Lambda$.) Then
$\sigma_\lambda^k\in\Gamma(\Gr_G,\calL_G^{\otimes k})$ maps
nonzero to $\calO_{\Gr_T,s^\lambda}\otimes\calL_G^{\otimes
k}|_{s^\lambda}$.
\end{proof}

Therefore, Theorem \ref{Main Theorem} follows from the following
\begin{thm}\label{reduced main theorem }
Assume that $G$ is a simple algebraic group of type $A$ or $D$.
Then for any $\lambda\in\Lambda_G$,
$\Gamma(\overline{\Gr}_G^\lambda,\I^\lambda(1))=0$.
\end{thm}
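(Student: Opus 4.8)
The plan is to reduce the vanishing $\Gamma(\overline{\Gr}_G^\lambda,\I^\lambda(1))=0$ to the fundamental-coweight case and then exploit the geometry of the Schubert varieties attached to minuscule (or small) coweights in types $A$ and $D$. First I would use the tensor/flat-degeneration machinery already established: by the flat family of Proposition \ref{flat family}, the special fiber $\overline{\Gr}_{G,p}^{\lambda+\mu}$ deforms to $\overline{\Gr}_G^\lambda\times\overline{\Gr}_G^\mu$, and the corresponding statement for the fixed-point ideal sheaves should be compatible with this degeneration. Concretely, I expect the ideal $\I^{\lambda+\mu}$ to degenerate into the ideal cutting out $(\overline{\Gr}_G^\lambda)^T\times(\overline{\Gr}_G^\mu)^T$ inside the product, because the $T$-fixed points of a product are the product of the $T$-fixed points. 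Combined with the Künneth formula $\Gamma(\overline{\Gr}_G^\lambda\times\overline{\Gr}_G^\mu,\I^\lambda(1)\boxtimes\calL_G\oplus\calL_G\boxtimes\I^\mu(1))$, a vanishing for $\lambda$ and $\mu$ separately would propagate to $\lambda+\mu$ by upper semicontinuity of $h^0$ along the flat family. Writing every dominant coweight as a nonnegative sum of fundamental coweights then reduces the theorem to the case $\lambda=\omega_i$.

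Having reduced to $\lambda=\omega_i$ a fundamental coweight, the second step is to compute $\Gamma(\overline{\Gr}_G^{\omega_i},\I^{\omega_i}(1))$ directly. Here the key simplification is that in types $A$ and $D$ the relevant fundamental coweights are either minuscule or very close to it, so $\overline{\Gr}_G^{\omega_i}$ is smooth (a partial flag variety or a small resolution target) or has mild, well-understood singularities. For minuscule $\omega_i$ the Schubert variety is a single $G_\calO$-orbit, hence smooth and isomorphic to a finite-dimensional partial flag variety $G/P$; its $T$-fixed points are exactly the $W/W_P$-many coweight points, and the statement becomes a statement about the finite-dimensional Borel--Weil line bundle restricted to the fixed locus. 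I would then verify that for the minuscule line bundle the restriction map $\Gamma(G/P,\calO(1))\to\Gamma((G/P)^T,\calO(1))$ is an isomorphism, which is precisely the finite-dimensional isomorphism (\ref{f.d.}) for minuscule weights, giving $\Gamma(\I^{\omega_i}(1))=0$. For the non-minuscule fundamental coweights in type $D$ (and the vector/spin representation subtleties), I expect to argue via an explicit resolution $\overline{\Gr}_G^{\omega_i}$ together with a character/dimension count matching $\dim\Gamma(\overline{\Gr}_G^{\omega_i},\calL_G)$ against $\#(\overline{\Gr}_G^{\omega_i})^T$.

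The main obstacle will be the non-minuscule fundamental coweights. For these the Schubert variety is singular, so the naive identification with $G/P$ fails and one cannot directly invoke the finite-dimensional minuscule isomorphism. The heart of the argument must be a dimension equality: by Proposition \ref{surjectivity} the restriction map $\Gamma(\overline{\Gr}_G^{\omega_i},\calL_G)\twoheadrightarrow\Gamma((\overline{\Gr}_G^{\omega_i})^T,\calL_G)$ is surjective, so $\Gamma(\I^{\omega_i}(1))=0$ is equivalent to the equality of dimensions $\dim\Gamma(\overline{\Gr}_G^{\omega_i},\calL_G)=\dim\Gamma((\overline{\Gr}_G^{\omega_i})^T,\calL_G)$. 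The left side is the level-one affine Demazure module $H^0(\overline{\Gr}_G^{\omega_i},\calL_G)^*$, whose dimension I can extract from the affine Weyl character formula or the tensor structure of Theorem \ref{tensor structure}; the right side is a sum of local contributions from each fixed point $s^{w\mu}$, computed via the formal-group description of Remark \ref{formal part} and the Fock-module decomposition appearing in the proof of Proposition \ref{surjectivity}. The delicate point is showing these two counts agree exactly for the singular fundamental coweights in type $D$, and it is precisely this matching that fails outside the simply-laced case and that is only partially controlled in type $E$; so I anticipate the bulk of the work lying in a careful, type-by-type verification of this numerical coincidence, most cleanly organized by exhibiting an explicit basis of $\Gamma((\overline{\Gr}_G^{\omega_i})^T,\calL_G)$ that lifts to global sections on $\overline{\Gr}_G^{\omega_i}$.
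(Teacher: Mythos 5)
Your skeleton --- reduce to fundamental coweights via the flat degeneration, then handle minuscule coweights by identifying $\overline{\Gr}_G^{\omega_i}\cong G/P_{\omega_i}$ and invoking finite-dimensional Borel--Weil --- is the same as the paper's, and your minuscule argument is legitimate (the paper deliberately gives a more self-contained proof of Proposition \ref{Minuscule} that avoids minuscule representation theory, but the argument you sketch does work, given Proposition \ref{surjectivity}). However, both of the load-bearing steps of your proposal have genuine gaps.

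First, the reduction. Upper semicontinuity goes the wrong way for you: along the flat family of Proposition \ref{flat family}, $h^0$ of (a flat extension of) the ideal sheaf can only jump \emph{up} at the special fiber, so its vanishing on generic fibers gives no control whatsoever over $\Gamma(\overline{\Gr}_G^{\lambda+\mu},\I^{\lambda+\mu}(1))$. The danger is precisely that the $T$-fixed subscheme of the special fiber may be strictly larger than the flat limit of the generic fixed points --- fixed-point subschemes do not form a flat family. The paper's Proposition \ref{factorization} closes this by a dimension count that you never assemble: the closure $Z$ of the generic fixed locus is flat over the base, so $\mathrm{length}\,\calO_{Z_p}=\dim\calO_{(\overline{\Gr}_G^\lambda)^T}\cdot\dim\calO_{(\overline{\Gr}_G^\mu)^T}$, and the inclusion $Z_p\subset(\overline{\Gr}_G^{\lambda+\mu})^T$ gives a \emph{lower} bound for the length of the special fixed-point scheme; Theorem \ref{tensor structure} gives $\dim H^0(\overline{\Gr}_G^{\lambda+\mu},\calL_G)=\dim H^0(\overline{\Gr}_G^{\lambda},\calL_G)\cdot\dim H^0(\overline{\Gr}_G^{\mu},\calL_G)$; and Proposition \ref{surjectivity} bounds that length \emph{above} by this $h^0$. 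Chaining these forces equality, hence the isomorphism. Your K\"unneth remark is correct but only computes the generic fiber; it cannot substitute for this chain.

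Second, and more fundamentally: for the non-minuscule fundamental coweights of type $D$ (namely $\omega_2=\theta$ and $\omega_3,\dots,\omega_{\ell-2}$), your plan is to verify the dimension equality $\dim\Gamma(\overline{\Gr}_G^{\omega_i},\calL_G)=\mathrm{length}\,\calO_{(\overline{\Gr}_G^{\omega_i})^T}$ by computing both sides, but you propose no method for the right-hand side, and none is available to you: the lengths of the fixed-point local rings at the \emph{singular} fixed points are exactly what the theorem determines (the paper extracts them only afterwards, as Corollary \ref{length}), and Remark \ref{formal part} describes $\Gr_T$ itself, not its scheme-theoretic intersection with a singular Schubert variety. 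So this part of your plan restates the problem rather than solving it. What the paper actually does is prove vanishing section by section: by Lemma \ref{ration curves}, a $T$-invariant rational curve attached to a \emph{long} root with $\langle\lambda,\check{\alpha}\rangle-n=1$ has degree one, so any section of $\calL_G$ killing both of its fixed points vanishes along it --- this is where simply-lacedness enters, since for short roots the degree is $2$ and the method fails (Remark \ref{nonsimply-laced}); the vanishing is then propagated over all of $\overline{\Gr}_G^{\omega_i}$ by induction on admissible $T$-invariant subvarieties, by rank induction restricting to $Z_{\omega_i}\cong\overline{\Gr}_M^{2\omega_i^M}$ for Levi subgroups $M$ of type $D_i$ (handling the directions with $\langle\omega_i,\check{\alpha}\rangle=2$), and by a final weight-basis lemma for $V^{\check{\omega}_i}$. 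Some replacement for this machinery is the missing core of your proposal.
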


This theorem is also proved for certain $\lambda$ if $G$ is of
type $E$ (see \S \ref{type E,I}-\ref{type E,II}). We expect it
holds for all $\lambda$ in this case.

We next show that, to prove Theorem \ref{reduced main theorem },
it is enough to prove it for fundamental coweights.
\begin{prop}\label{factorization}
Let $G$ be a simple, but not necessarily simply-laced algebraic
group. For $\lambda,\mu\in\Lambda_G^+$, if
$\Gamma(\overline{\Gr}_G^\lambda,\I^\lambda(1))=\Gamma(\overline{\Gr}_G^\mu,\I^\mu(1))=0$,
then
$\Gamma(\overline{\Gr}_G^{\lambda+\mu},\I^{\lambda+\mu}(1))=0$.
\end{prop}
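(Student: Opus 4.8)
The goal is to prove that $\Gamma(\overline{\Gr}_G^{\lambda+\mu},\I^{\lambda+\mu}(1))=0$, assuming the vanishing for $\lambda$ and $\mu$ separately. The natural strategy is to exploit the flat degeneration constructed in Proposition \ref{flat family}, which degenerates $\overline{\Gr}_{G,x}^\lambda\times\overline{\Gr}_{G,p}^\mu$ (generic fiber) to $\overline{\Gr}_{G,p}^{\lambda+\mu}$ (special fiber). The key idea is that the $T$-fixed point subscheme is compatible with this degeneration, so the vanishing of $\Gamma(\I^{\lambda+\mu}(1))$ on the special fiber can be deduced from the corresponding statement on the generic fiber, where it factorizes into the product of the two hypotheses.

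\textbf{Key steps.} First I would set up the relative $T$-fixed point subscheme inside the flat family $\overline{\Gr}_{G,X\times p}^{\lambda,\mu}$, where $T$ acts compatibly over $X$. Taking $T$-fixed points commutes with the flat degeneration in the sense that the fixed locus of the generic fiber is $(\overline{\Gr}_G^\lambda)^T\times(\overline{\Gr}_G^\mu)^T$, while the fixed locus of the special fiber is $(\overline{\Gr}_G^{\lambda+\mu})^T$. Let $\I$ denote the relative ideal sheaf cutting out the relative fixed-point subscheme, so that $\I$ restricts to $\I^\lambda\boxtimes\calO+\calO\boxtimes\I^\mu$ (the ideal of the product fixed scheme) on the generic fiber and to $\I^{\lambda+\mu}$ on the special fiber. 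Next, by the flatness of the family and semicontinuity, I would argue that $H^0$ of the twisted ideal sheaf on the special fiber is dominated by $H^0$ on the generic fiber. Concretely, upper semicontinuity of $h^0$ in a flat family, combined with the fact that on the generic fiber the twisted ideal sheaf has no sections (see below), forces the sections on the special fiber to vanish as well.

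\textbf{The generic fiber computation.} On the generic fiber $\overline{\Gr}_G^\lambda\times\overline{\Gr}_G^\mu$, the line bundle $\calL_G(1)$ restricts as $\calL_G\boxtimes\calL_G$ by \ref{line bundle}, and the ideal of the product fixed-point subscheme is $\I^\lambda\boxtimes\calO_{\overline{\Gr}_G^\mu}+\calO_{\overline{\Gr}_G^\lambda}\boxtimes\I^\mu$. By the K\"unneth formula, global sections of this ideal twisted by $\calL_G\boxtimes\calL_G$ fit into the exact sequence whose outer terms are $\Gamma(\overline{\Gr}_G^\lambda,\I^\lambda(1))\otimes\Gamma(\overline{\Gr}_G^\mu,\calL_G)$ and $\Gamma(\overline{\Gr}_G^\lambda,\calL_G)\otimes\Gamma(\overline{\Gr}_G^\mu,\I^\mu(1))$, both of which vanish by hypothesis. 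Hence the generic fiber contributes no sections.

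\textbf{The main obstacle.} The delicate point I expect to be hardest is the compatibility of the $T$-fixed-point subscheme with the flat degeneration, at the level of \emph{scheme} structure rather than just underlying sets. Forming fixed-point subschemes is a limit-type operation and does not automatically commute with specialization of families; one must verify that the relative fixed-point subscheme $\I$ is itself flat over $X$, so that its Hilbert polynomial (equivalently $\chi(\I(1))$ on the fibers) is constant and semicontinuity applies to $h^0$. This requires checking that the relative fixed locus is cut out by a coherent ideal whose formation commutes with base change to each fiber --- in particular that no embedded or nilpotent components appear in the special fiber beyond those of $(\overline{\Gr}_G^{\lambda+\mu})^T$. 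I would establish this using the explicit description of the family from Proposition \ref{flat family} together with Theorem \ref{Fixed-point} applied relatively, reducing to the fact that $T$ acts through a torus whose fixed-point formation is flat over the base in this situation.
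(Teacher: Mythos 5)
Your setup --- the flat family $\overline{\Gr}_{G,X\times p}^{\lambda,\mu}$ of Proposition \ref{flat family}, the relative $T$-fixed subscheme, and the K\"unneth computation showing the generic fiber contributes no sections --- matches the paper's starting point, but the passage from the generic fiber to the special fiber contains two genuine gaps. First, your semicontinuity argument runs in the wrong direction: for a sheaf flat over the base, $h^0$ of the fibers is \emph{upper} semicontinuous, meaning it can only jump \emph{up} at the special point. So vanishing of $h^0$ on the generic fiber gives no upper bound whatsoever at $p$; ``dominated by $H^0$ on the generic fiber'' is exactly what semicontinuity does not provide. Second, the flatness of the relative fixed-point subscheme over $X$, which you correctly single out as the crux, is not a fact you can quote. (Note that commutation of fixed-point formation with base change is formal from the functorial definition in \S\ref{fp}, so the fibers of the relative fixed locus really are $(\overline{\Gr}_{G,x}^\lambda)^T\times(\overline{\Gr}_{G,p}^\mu)^T$ and $(\overline{\Gr}_{G,p}^{\lambda+\mu})^T$; the issue is purely that this closed subscheme need not be flat, i.e.\ its fiber length may jump at $p$.) Torus fixed points are flat, indeed smooth, over the base when the \emph{family} is smooth over the base, but here the fibers are singular Schubert varieties, and in this situation the flatness you need is essentially equivalent to the proposition itself: it follows a posteriori from the proposition together with Theorem \ref{tensor structure} by comparing lengths, so assuming it is circular.

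The paper closes the loop with two ingredients your proposal never invokes: Proposition \ref{surjectivity} and Theorem \ref{tensor structure}. Instead of the full relative fixed-point subscheme, it takes $Z$ to be the scheme-theoretic image (closure) of the fixed locus over $X-p$; this $Z$ is automatically flat over $X$ (a subsheaf of a sheaf flat over a smooth curve is torsion-free, hence flat) and is \emph{contained} in $(\overline{\Gr}_{G,p}^{\lambda+\mu})^T$, which gives the one-sided estimate
\[
\mathrm{length}\,\calO_{(\overline{\Gr}_G^{\lambda+\mu})^T}\;\geq\;
\mathrm{length}\,\calO_{(\overline{\Gr}_G^{\lambda})^T}\cdot
\mathrm{length}\,\calO_{(\overline{\Gr}_G^{\mu})^T}.
\]
The hypotheses (via Proposition \ref{surjectivity}) identify the right-hand side with $\dim H^0(\overline{\Gr}_G^\lambda,\calL_G)\cdot\dim H^0(\overline{\Gr}_G^\mu,\calL_G)$, which equals $\dim H^0(\overline{\Gr}_G^{\lambda+\mu},\calL_G)$ by Theorem \ref{tensor structure}; and Proposition \ref{surjectivity} applied to $\lambda+\mu$ gives the opposite inequality $\dim H^0(\overline{\Gr}_G^{\lambda+\mu},\calL_G)\geq\mathrm{length}\,\calO_{(\overline{\Gr}_G^{\lambda+\mu})^T}$. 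All inequalities are therefore equalities, and the surjection of Proposition \ref{surjectivity} between spaces of equal dimension is an isomorphism, i.e.\ $\Gamma(\overline{\Gr}_G^{\lambda+\mu},\I^{\lambda+\mu}(1))=0$. In short: what is needed is an \emph{upper} bound on the length of the special fixed-point scheme; it comes from the surjectivity statement plus the Demazure tensor-product theorem, and no semicontinuity argument can substitute for it.
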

\begin{proof} Recall the variety
$\overline{\Gr}_{G,X\times p}^{\lambda,\mu}$ from \ref{f.f.}. $T$
acts on $\overline{\Gr}_{G,X\times p}^{\lambda,\mu}$ by embedding
$T_{X\times p}\hookrightarrow \mathcal G=G_{\calO,X^2}|_{X\times
p}$. Let $(\overline{\Gr}_{G,X\times p}^{\lambda,\mu})^T$ be the
$T$-fixed subscheme of $\overline{\Gr}_{G,X\times
p}^{\lambda,\mu}$. Then its fiber over $x\neq p$ is
$(\overline{\Gr}_{G,x}^\lambda)^T\times(\overline{\Gr}_{G,p}^\mu)^T$
and over $p$ is $(\overline{\Gr}_{G,p}^{\lambda+\mu})^T$.

For simplicity, we will assume that $X=\A^1$ now. Then we claim
that $(\overline{\Gr}_{G,X\times p}^{\lambda,\mu})^T$ is flat over
$X-p$. To see this, one regards $X-p\cong \G_m$ so that $\G_m$
acts on $X-p$. It is easy to see there is a $\G_m$-action on
$\overline{\Gr}_{G,X\times p}^{\lambda,\mu}|_{X-p}$, commuting
with the $T$-action, such that the natural projection
$\overline{\Gr}_{G,X\times p}^{\lambda,\mu}|_{X-p}\to X-p$ is an
equivariant map. Therefore, $\overline{\Gr}_{G,X\times
p}^{\lambda,\mu}|_{X-p}\cong
(X-p)\times(\overline{\Gr}_G^\lambda\times\overline{\Gr}_G^\mu)$
and
$(\overline{\Gr}_G^{\lambda,\mu})^T|_{X-p}\cong(X-p)\times((\overline{\Gr}_G^\lambda)^T\times(\overline{\Gr}_G^\mu)^T)$.
Our claim follows. Now let $Z$ be the scheme theoretical image of
$(\overline{\Gr}_{G,X\times
p}^{\lambda,\mu})^T|_{X-p}\hookrightarrow\overline{\Gr}_{G,X\times
p}^{\lambda,\mu}$. Since $\overline{\Gr}_{G,X\times
p}^{\lambda,\mu}$ is flat over $X$ (cf. Proposition \ref{flat
family}), so is $Z$. Certainly,
$Z\subset(\overline{\Gr}_{G,X\times p}^{\lambda,\mu})^T$.
Therefore, $Z_p\subset(\overline{\Gr}_{G,X\times
p}^{\lambda,\mu})^T|_p\cong(\overline{\Gr}_{G,p}^{\lambda+\mu})^T$.
Since both of them are finite schemes, we have
\[\begin{array}{lll}    &\dim\calO_{(\overline{\Gr}_G^{\lambda+\mu})^T}\\
                    \geq&\dim\calO_{Z_p}&\\
                       =&\dim\calO_{(\overline{\Gr}_G^\lambda)^T}\dim\calO_{(\overline{\Gr}_G^\mu)^T}&\mbox{ (by the flatness of } Z)\\
                       =&\dim H^0(\overline{\Gr}_G^\lambda,\calL_G)\dim H^0(\overline{\Gr}_G^\mu,\calL_G)&\mbox{ (by the assumption of the
                                                                 lemma) }\\
                       =&\dim H^0(\overline{\Gr}_G^{\lambda+\mu},\calL_G)&\mbox{ (by Theorem \ref{tensor structure}) }\\
                    \geq&\dim\calO_{(\overline{\Gr}_G^{\lambda+\mu})^T}&\mbox{ (by
                    Proposition
\ref{surjectivity}) }\end{array}\] Therefore, $\dim
H^0(\overline{\Gr}_G^{\lambda+\mu},\calL_G)=\dim\calO_{(\overline{\Gr}_G^{\lambda+\mu})^T}$.
Since
\[H^0(\overline{\Gr}_G^{\lambda+\mu},\calL_G)\twoheadrightarrow H^0(\overline{\Gr}_G^{\lambda+\mu},\calO_{(\overline{\Gr}_G^{\lambda+\mu})^T}\otimes\calL_G)\] is a surjective by Proposition \ref{surjectivity}, and they have the same
dimension, the map must be an isomorphism.
\end{proof}

\subsection{Proof of Theorem \ref{reduced main theorem }}\label{proof}
By Proposition \ref{factorization}, it is enough to prove the
Theorem \ref{reduced main theorem } for $\lambda$ being a
fundamental coweight. Therefore, we could assume from now on that
$G$ is of adjoint type. However, $G$ is not assumed to be
simply-laced through \ref{a}-\ref{addimisible}.

\begin{s}\label{a}Assume that $G$ is a simple algebraic group. For $\check{\psi}=n\check{\delta}+\check{\alpha}$ a real root of
$\hat{\frakg}$, let $U_{\check{\psi}}$ be the corresponding
unipotent subgroup in $G_\K$, i.e., $\mbox{Lie}U_{\check{\psi}}=\C
e_{n\check{\delta}+\check{\alpha}}$, where
$e_{n\check{\delta}+\check{\alpha}}$ is a root vector of
$n\check{\delta}+\check{\alpha}$. Let $S_{\check{\psi}}\subset
G_\K$ be the subgroup generated by $U_{\check{\psi}},
U_{-\check{\psi}}$. For $\lambda\in\Lambda_G$, let $s^\lambda$ be
the corresponding point in $\Gr_G$. For any choice of the
uniformizer $t$, recall $t^\lambda\in G_\K$. Since
\[t^{-\lambda}U_{n\check{\delta}+\check{\alpha}}t^\lambda=U_{(n-\langle\check{\alpha},\lambda\rangle)\check{\delta}+\check{\alpha}}\]
One obtains that $U_{n\check{\delta}+\check{\alpha}}s^\lambda\neq
s^\lambda$ if and only if
$n<\langle\check{\alpha},\lambda\rangle$.
\end{s}
\begin{lem}\label{ration curves}
Under above assumptions, $S_{\check{\psi}}\cdot s^\lambda\subset
\Gr_G$ is a $T$-invariant rational curve on $\Gr_G$. There are two
$T$-fixed points in $S_{\check{\psi}}\cdot s^\lambda$; namely,
$s^\lambda$ and
$s^{\lambda-(\langle\lambda,\check{\alpha}\rangle-n)\alpha}$.
Furthermore, the restriction of $\calL_G$ to this rational curve
is
$\calO(\dfrac{2(\langle\lambda,\check{\alpha}\rangle-n)}{(\check{\alpha},\check{\alpha})^*})$.

In particular, if $\check{\alpha}$ is a long root, and
$\langle\lambda,\check{\alpha}\rangle=1$, then the restriction of
$\calL_G$ on $S_{\check{\alpha}}s^\lambda$ is of degree one.
\end{lem}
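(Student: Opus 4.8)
The plan is to recognize $S_{\check\psi}$ as the image of a homomorphism $\phi:SL_2\to G_\K$ attached to the $\mathfrak{sl}_2$-triple $(e_{\check\psi},\psi^\vee,e_{-\check\psi})$, so that $U_{\pm\check\psi}$ are the images of the two opposite unipotent subgroups and $S_{\check\psi}\cdot s^\lambda$ is a single $SL_2$-orbit. In this way the entire statement becomes a rank-one computation: one $SL_2$ acting on $\Gr_G$, together with the restriction of $\calL_G$ to one of its orbits.

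First I would determine the orbit. By \ref{a}, $U_{n\check\delta+\check\alpha}$ moves $s^\lambda$ exactly when $n<\langle\check\alpha,\lambda\rangle$, and $U_{-n\check\delta-\check\alpha}$ moves it exactly when $n>\langle\check\alpha,\lambda\rangle$; hence precisely one of $U_{\pm\check\psi}$ acts nontrivially on $s^\lambda$ (unless $\langle\lambda,\check\alpha\rangle=n$, when the orbit degenerates to the point $s^\lambda$). Replacing $\check\psi$ by $-\check\psi$ if needed, I may assume $m:=\langle\lambda,\check\alpha\rangle-n>0$, so that $U_{\check\psi}s^\lambda\neq s^\lambda$ while $U_{-\check\psi}$ and the torus $\phi(\mathrm{diag})$ fix $s^\lambda$. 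The stabilizer of $s^\lambda$ in $SL_2$ then contains a Borel subgroup but is not all of $SL_2$, so $S_{\check\psi}\cdot s^\lambda\cong SL_2/B\cong\bbP^1$. Since $T$ normalizes every affine root group it normalizes $S_{\check\psi}$, and it fixes $s^\lambda$, so this $\bbP^1$ is $T$-invariant; $T$ acts on it through $\check\alpha\neq0$, giving exactly two fixed points. One is $s^\lambda$; the other is the image under a representative of the nontrivial rank-one Weyl element, i.e. the affine reflection $s_{\check\psi}$ applied to $s^\lambda$. Computing $s_{\check\psi}$ on the translation lattice identifies it with the reflection in the hyperplane $\{\langle\check\alpha,\cdot\rangle=n\}$, namely $\lambda\mapsto\lambda-(\langle\lambda,\check\alpha\rangle-n)\alpha$, which is the asserted point $s^{\lambda-m\alpha}$.

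For the degree I would localize on this $\bbP^1$ with respect to the one-parameter torus $S:=\phi(\mathrm{diag})=\psi^\vee(\G_m)$. The tangent space at $s^\lambda$ is spanned by $e_{\check\psi}\cdot s^\lambda$, on which $S$ acts with weight $\langle\check\psi,\psi^\vee\rangle=2$. The fibers of $\calL_G$ at the two fixed points are the extremal weight lines of the level-one module $\Gamma(\Gr_G,\calL_G)^*$ (Proposition \ref{Borel-Weil}) attached to $s^\lambda$ and to $s^{\lambda-m\alpha}$; these are $W_{\mathrm{aff}}$-translates of each other differing by $s_{\check\psi}$, which negates the pairing with $\psi^\vee$, so the two fiber weights differ by $2\langle\Theta,\psi^\vee\rangle$ where $\Theta$ is the extremal weight at $s^\lambda$. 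Writing the affine coroot as $\psi^\vee=\alpha+\tfrac{2n}{(\check\alpha,\check\alpha)^*}K$ and using level one $\langle\cdot,K\rangle=1$ together with $\langle\iota\lambda,\alpha\rangle=\tfrac{2}{(\check\alpha,\check\alpha)^*}\langle\lambda,\check\alpha\rangle$, the localization formula $\deg=\tfrac{\text{difference of fiber weights}}{\text{tangent weight}}$ produces $\tfrac{2}{(\check\alpha,\check\alpha)^*}(\langle\lambda,\check\alpha\rangle-n)=\tfrac{2m}{(\check\alpha,\check\alpha)^*}$. As a conceptual cross-check, restricting $\calL_G$ along the rank-one subgroup identifies it with the $\ell$-th power of the $\Pic$-generator in rank one, where $\ell=\tfrac{2}{(\check\alpha,\check\alpha)^*}$ is the Dynkin index of the root $SL_2$ and the rank-one degree is $m$.

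The final clause is then immediate: for $\check\alpha$ long one has $(\check\alpha,\check\alpha)^*=2$, and $\check\psi=\check\alpha$ with $\langle\lambda,\check\alpha\rangle=1$ forces $m=1$, so $\calL_G$ has degree one on $S_{\check\alpha}s^\lambda$. I expect the main obstacle to be the degree step, and within it the affine bookkeeping: one must pin down the central ($K$) contribution to $\psi^\vee$ and fix all sign conventions consistently (the identification $s^\mu\leftrightarrow t_\mu$, the orientation of the curve, and $\calL_G$ versus its dual) so that the numerator is $\langle\lambda,\check\alpha\rangle-n$ and not $\langle\lambda,\check\alpha\rangle+n$. The robust and genuinely important output is the factor $\tfrac{2}{(\check\alpha,\check\alpha)^*}$, equal to $1$ precisely for long roots and strictly larger for short roots; this is exactly the mechanism by which the isomorphism of the main theorem can hold only in the simply-laced case.
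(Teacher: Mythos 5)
Your proposal is correct, and its skeleton coincides with the paper's: the identification $S_{\check{\psi}}\cdot s^\lambda\cong S_{\check{\psi}}/B_-\cong\bbP^1$ (because $U_{-\check{\psi}}$ fixes $s^\lambda$ when $n<\langle\lambda,\check{\alpha}\rangle$), and the affine coroot $\psi=\alpha+\tfrac{2n}{(\check{\alpha},\check{\alpha})^*}K$ as the source of the factor $\tfrac{2}{(\check{\alpha},\check{\alpha})^*}$, are exactly the paper's ingredients (the paper proves the coroot formula by a short induction on $n$; you assert it, which is harmless since it is standard). The genuine divergence is the degree step. The paper embeds $\Gr_G$ into $\bbP\bigl(\bigoplus_{\gamma}L(\Lambda+\iota\omega_{i_\gamma})\bigr)$ via $\calL_G$, takes $V\subset L(\Lambda)$ to be the $\fraks\frakl_2$-submodule generated by the extremal line $t^\lambda v_\Lambda$, computes its lowest weight $\bigl\langle\psi,\Lambda-\iota\lambda-\tfrac{(\lambda,\lambda)}{2}\check{\delta}\bigr\rangle=\tfrac{2(n-\langle\lambda,\check{\alpha}\rangle)}{(\check{\alpha},\check{\alpha})^*}$ using the weight formula cited from \cite{MV}, and concludes by classical $SL_2$-theory: the orbit of a lowest-weight line in $\bbP(V)$ is a rational normal curve, so the two fixed points and the degree come out simultaneously, with positivity (hence all signs) automatic. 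You instead get the second fixed point from the affine reflection $s_{\check{\psi}}$ (correctly: the hyperplane $\langle\check{\alpha},\cdot\rangle=n$ is precisely the locus where both $U_{\pm\check{\psi}}$ fix $s^\mu$) and the degree from equivariant localization; this spares you the analysis of the submodule $V$, but it consumes the same representation-theoretic input — the extremal weight $\Theta$ at $s^\lambda$, through $\langle\Theta,K\rangle=1$ and $\langle\Theta,\alpha\rangle=-\tfrac{2}{(\check{\alpha},\check{\alpha})^*}\langle\lambda,\check{\alpha}\rangle$ — and it is exactly where your flagged sign ambiguities concentrate, an ambiguity the paper's route never encounters. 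If you want to keep localization but kill the bookkeeping, localize with respect to the finite torus $T$ rather than $\psi(\G_m)$: with $m=\langle\lambda,\check{\alpha}\rangle-n$, the fiber $T$-weights of $\calL_G$ at the two fixed points are $\iota\lambda$ and $\iota(\lambda-m\alpha)$ up to a common constant, so their difference is $m\iota\alpha=\tfrac{2m}{(\check{\alpha},\check{\alpha})^*}\check{\alpha}$, while the tangent $T$-weight at $s^\lambda$ is $\check{\alpha}$; the degree $\tfrac{2m}{(\check{\alpha},\check{\alpha})^*}$ then drops out with no reference to $K$ or to the central term of the affine coroot at all.
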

\begin{rmk}\label{nonsimply-laced} In the proof of Theorem \ref{reduced main theorem },
we need to a lot of rational curves of degree one (or
equivalently, projective lines) in $\overline{\Gr}_G^{\omega_i}$.
If $G$ is not simply-laced, then for short root $\check{\alpha}$,
$S_{\check{\alpha}}s^\lambda$ will not be of degree one, and there
will not be enough projective lines for our purpose.
\end{rmk}
\begin{proof} Since $\calL_G$ is very ample, we use it to embed
$\Gr_G$ into projective spaces. Recall from Proposition
\ref{Borel-Weil},
\[H^0(\Gr_G,\calL_G)^*=\bigoplus_{\gamma\in\pi_1(G)}L(\Lambda+\iota\omega_{i_\gamma})\]
This representation of $\hat{\frakg}$ can be integrated to a
projective action of $G_\K$ on it. More precisely, let $\tilde{G}$
be the simply-connected cover of $G$. Since for any
$\lambda\in\Lambda_G$, $Ad_{t^\lambda}$ acts on $\tilde{G}_\K$,
one can define
\[\widetilde{G_\K}=\tilde{G}_\K\rtimes\Lambda_G/\tilde{G}_\K\rtimes
R_G\] It follows from the construction that the neutral component
$\widetilde{G_\K}^0$ of $\widetilde{G_\K}$ is isomorphic to
$\tilde{G}_\K$ and $\widetilde{G_\K}$ is a $\pi_1(G)$-covering of
$G_\K$. Then similarly to the simply-connected case as observed by
Faltings, and explained in \cite{LS} Proposition 4.3, the
$\hat{\frakg}$-module
$\bigoplus_{\gamma\in\pi_1(G)}L(\Lambda+\iota\omega_{i_\gamma})$
is integrated to a module over the $\G_m$-central extension of
$\widetilde{G_\K}$. Let $v_\Lambda$ be the line of highest weight
vectors in $L(\Lambda)$. Then
$\Gr_G=\widetilde{G_\K}/\tilde{G}_\calO$ is embedded in
$\bbP(\bigoplus_{\gamma\in\pi_1(G)}L(\Lambda+\iota\omega_{i_\gamma}))$
by $g\mapsto gv_\Lambda$ for $g\in\widetilde{G_\K}$. We recall the
weight for the line $s^\mu=t^\mu\cdot v_\Lambda$ is
$\Lambda-\iota\mu-\dfrac{(\mu,\mu)}{2}\check{\delta}$ (cf.
\cite{MV} Proposition 3.1).

Now observe that $S_{\check{\psi}}$ is isomorphic to $SL_2$ or
$PSL_2$. We claim

\vspace{2mm}

\noindent\bf Claim. \rm The Lie algebra of $S_{\check{\psi}}$ is
the $\fraks\frakl_2$-triple spanned by
\[\{e_{n\check{\delta}+\check{\alpha}},2nK/(\check{\alpha},\check{\alpha})^*+\alpha,e_{-n\check{\delta}-\check{\alpha}}\}\]

\vspace{2mm}

Assume this claim for the moment. Since
$n<\langle\lambda,\check{\alpha}\rangle$,
$U_{-\check{\psi}}s^\lambda=s^\lambda$. Therefore,
$S_{\check{\psi}}s^\lambda\cong S_{\check{\psi}}/B_-\cong \bbP^1$,
where $B_-$ is the Borel subgroup of $S_{\check{\psi}}$ containing
$U_{-\check{\psi}}$.

Let $V\subset L(\Lambda)$ be the minimal $\fraks\frakl_2$-stable
subspace generated by $t^\lambda v_\Lambda$. Then $V$ is an
irreducible representation of this $\fraks\frakl_2$ of lowest
weight
\[\langle
\dfrac{2nK}{(\check{\alpha},\check{\alpha})^*}+\alpha,\Lambda-\iota\lambda-\dfrac{(\lambda,\lambda)}{2}\check{\delta}\rangle=\dfrac{2(n-\langle\lambda,\check{\alpha}\rangle)}{(\check{\alpha},\check{\alpha})^*}\]
Now $SL_2$-theory shows that $S_{\check{\psi}}s^\lambda$ is
contained in $\bbP(V)$, with two
$(2nK/(\check{\alpha},\check{\alpha})^*+\alpha)$-fixed points
$s^\lambda$ and
$s^{\lambda-(\langle\lambda,\check{\alpha}\rangle-n)\alpha}$.
Furthermore, $S_{\check{\psi}}s^\lambda$ is rational in $\bbP(V)$
of degree
$\dfrac{2(\langle\lambda,\check{\alpha}\rangle-n)}{(\check{\alpha},\check{\alpha})^*}$.
\end{proof}

It remains to prove the claim above. It is a direct consequence of
the following lemma, which is well-known. We include a proof for
completeness.

\begin{lem}
Let $\check{\psi}=n\check{\delta}+\check{\alpha}$ be a real root,
then the corresponding coroot is
$\psi=\dfrac{2n}{(\check{\alpha},\check{\alpha})^*}K+\alpha$.
\end{lem}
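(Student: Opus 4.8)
The plan is to exhibit an explicit $\fraks\frakl_2$-triple in $\hat{\frakg}$ attached to the real root $\check\psi=n\check\delta+\check\alpha$ and to read off its semisimple element, which is by definition the coroot $\psi$. The key observation is that a root vector for $\check\psi$ is $e_{\check\alpha}\otimes t^n$, where $e_{\check\alpha}$ is a root vector of $\frakg$ for the finite root $\check\alpha$. Indeed, grading $\hat\frakg$ by the derivation $d=t\,\frac{d}{dt}$, one checks that $[h,e_{\check\alpha}\otimes t^n]=\langle\check\alpha,h\rangle\,e_{\check\alpha}\otimes t^n$ for $h\in\frakh$, that the central element $K$ acts by $0$, and that $d$ acts by $n$; since $\check\delta$ vanishes on $\frakh$ and on $K$ and takes the value $1$ on $d$, the $\hat\frakh$-weight of $e_{\check\alpha}\otimes t^n$ is exactly $\check\alpha+n\check\delta=\check\psi$, and likewise $e_{-\check\alpha}\otimes t^{-n}$ has weight $-\check\psi$.

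First I would normalize the finite root vectors $e_{\check\alpha},e_{-\check\alpha}$ so that their bracket is the finite coroot $\alpha$. Recalling that in $\frakg$ one has $[e_{\check\alpha},e_{-\check\alpha}]=(e_{\check\alpha},e_{-\check\alpha})\,t_{\check\alpha}$, where $t_{\check\alpha}\in\frakh$ is defined by $(t_{\check\alpha},\,\cdot\,)=\check\alpha$, and that $\alpha=\frac{2}{(\check\alpha,\check\alpha)^*}t_{\check\alpha}$, this normalization amounts to $(e_{\check\alpha},e_{-\check\alpha})=\frac{2}{(\check\alpha,\check\alpha)^*}$. Then I would compute directly from the defining commutator of $\hat\frakg$:
\[[e_{\check\alpha}\otimes t^n,\,e_{-\check\alpha}\otimes t^{-n}]=[e_{\check\alpha},e_{-\check\alpha}]+n\,(e_{\check\alpha},e_{-\check\alpha})\,K=\alpha+\frac{2n}{(\check\alpha,\check\alpha)^*}K.\]
Finally I would verify the $\fraks\frakl_2$ relation: since $K$ is central and $\langle\check\alpha,\alpha\rangle=2$,
\[\Bigl[\alpha+\tfrac{2n}{(\check\alpha,\check\alpha)^*}K,\ e_{\check\alpha}\otimes t^n\Bigr]=\langle\check\alpha,\alpha\rangle\,e_{\check\alpha}\otimes t^n=2\,e_{\check\alpha}\otimes t^n,\]
so $\{e_{\check\alpha}\otimes t^n,\ \alpha+\frac{2n}{(\check\alpha,\check\alpha)^*}K,\ e_{-\check\alpha}\otimes t^{-n}\}$ is an $\fraks\frakl_2$-triple, and its middle term is the coroot $\psi$, giving the asserted formula (and simultaneously the Claim preceding the lemma).

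The computation is elementary once the normalization is pinned down, so the only delicate point — and the single place an error could enter — is the constant $(e_{\check\alpha},e_{-\check\alpha})=2/(\check\alpha,\check\alpha)^*$, since it is precisely this factor that multiplies $K$ and produces the coefficient $2n/(\check\alpha,\check\alpha)^*$. As an independent check one may instead argue coordinate-free via the isomorphism $\nu$ induced by the invariant form on $\hat\frakh$: one has $\nu^{-1}(\check\delta)=K$ (verified against $(K,d)=1$ and $(K,K)=(K,\frakh)=0$) and $\nu^{-1}(\check\alpha)=t_{\check\alpha}=\frac{(\check\alpha,\check\alpha)^*}{2}\alpha$, so that $\psi=\frac{2}{(\check\psi,\check\psi)^*}\nu^{-1}(\check\psi)$ yields the same expression, using $(\check\psi,\check\psi)^*=(\check\alpha,\check\alpha)^*$ because $\check\delta$ is isotropic and orthogonal to the finite roots.
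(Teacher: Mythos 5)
Your proof is correct, but it takes a genuinely different route from the paper's. The paper argues purely inside the root system of $\hat{\frakg}$: it proceeds by induction on $n$, applying the simple affine reflection $r_0$ (whose coroot is $\alpha_0=K-\theta$) to pass from $(n-1)\check{\delta}+\check{\alpha}$ to $n\check{\delta}+\check{\alpha}-\check{\theta}$ whenever $\langle\theta,\check{\alpha}\rangle=1$, using that $\check{\theta}$ is long so $|\langle\theta,\check{\alpha}\rangle|\leq 1$ for $\check{\alpha}\neq\check{\theta}$, and then finishes with the fact that every root is $W$-conjugate to some $\check{\beta}$ with $|\langle\theta,\check{\beta}\rangle|=1$. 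You instead compute directly in the loop realization: you exhibit the root vector $e_{\check{\alpha}}\otimes t^n$, pin down the normalization $(e_{\check{\alpha}},e_{-\check{\alpha}})=2/(\check{\alpha},\check{\alpha})^*$ via $[e_{\check{\alpha}},e_{-\check{\alpha}}]=(e_{\check{\alpha}},e_{-\check{\alpha}})t_{\check{\alpha}}$ and $\alpha=\frac{2}{(\check{\alpha},\check{\alpha})^*}t_{\check{\alpha}}$, and read off the coroot from the cocycle term of the defining bracket of $\hat{\frakg}$. Your approach buys two things: it is a one-step computation with no induction and no appeal to Weyl-group conjugacy, and it simultaneously establishes the Claim preceding the lemma (that $\mathrm{Lie}\,S_{\check{\psi}}$ is the $\fraks\frakl_2$-triple spanned by $e_{n\check{\delta}+\check{\alpha}}$, $2nK/(\check{\alpha},\check{\alpha})^*+\alpha$, $e_{-n\check{\delta}-\check{\alpha}}$), which in the paper is deduced from the lemma rather than proved alongside it. What the paper's argument buys is independence from the explicit loop model and from any normalization of root vectors or of the central-extension cocycle: it only uses Weyl-equivariance of the root--coroot correspondence, so it would transfer verbatim to any abstract Kac-Moody setting. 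One small point of care in your version: the grading by $d=t\frac{d}{dt}$ and the pairing $(K,d)=1$ require working in the extended algebra $\hat{\frakg}\oplus\C d$ rather than the centrally extended loop algebra as literally defined in the paper's introduction; this is standard (real roots $n\check{\delta}+\check{\alpha}$ only make sense relative to $\frakh\oplus\C K\oplus\C d$) and both of your cross-checking computations are consistent with it.
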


\begin{proof}Observe we could assume $n\geq 0$. The coroot corresponding to the simple root
$\check{\alpha}_0=\check{\delta}-\check{\theta}$ is
$\alpha_0=K-\theta$. We prove the lemma by induction on $n$.

For $n=0$, it is clear. Assuming for $n-1$, the lemma holds.
Observe that since $\check{\theta}$ is a long root, then for any
root $\check{\alpha}\neq\check{\theta}$,
$|\langle\theta,\check{\alpha}\rangle|\leq 1$. If
$\langle\theta,\check{\alpha}\rangle$=1, then
$r_0((n-1)\check{\delta}+\check{\alpha})=n\check{\delta}+\check{\alpha}-\check{\theta}$.
where $r_0$ is the simple reflection in the affine Weyl group
corresponding to the vertex $i_0$ in the affine Dynkin diagram. So
the coroot corresponding to
$n\check{\delta}+\check{\alpha}-\check{\theta}$ is
\[r_0(\frac{2(n-1)K}{(\check{\alpha},\check{\alpha})^*}+\alpha)=\frac{2nK}{(\check{\alpha},\check{\alpha})^*}+\alpha-\frac{2\theta}{(\check{\alpha},\check{\alpha})^*}\]
Observe that the coroot corresponding to
$\check{\alpha}-\check{\theta}$ is exactly
$\alpha-\dfrac{2\theta}{(\check{\alpha},\check{\alpha})^*}$.
Therefore, for $|\langle\theta,\check{\alpha}\rangle|=1$, the
coroot corresponding to $n\check{\delta}+\check{\alpha}$ is
$2nK/(\check{\alpha},\check{\alpha})^*+\alpha$. Now the lemma
follows from for any $\check{\alpha}\in\Delta$, there exists $w\in
W$ and $\check{\beta}\in\Delta,
|\langle\theta,\check{\beta}\rangle|=1$, such that
$\check{\alpha}=w(\check{\beta})$.
\end{proof}

\begin{s} Let $L$ be a Levi subgroup of some parabolic subgroup of
$G$, and $M=[L,L]$ be the derived group of $L$. Let $T_M\to
T_G=T_L$ be the maximal tori of $M$, $G$ and $L$. Then there is a
natural embedding of lattices $\Lambda_M\subset\Lambda_G$. In
addition, we have a projection $p:\Lambda_G\to\Lambda_M$ defined
as follows. Let $Z(L)^0$ be the neutral connected component of the
center of $L$. Since $G$ is assumed to be of adjoint-type, $M$ is
also of adjoint type. Therefore $T_M\times Z(L)^0\to T_G$ is an
isomorphism. Then the projection $p$ is induced from $T_G\cong
T_M\times Z(L)^0\to T_M$.

For $\lambda\in\Lambda_G$, let $\Gr_M\to \Gr_G$ be the embedding,
induced from $M_\K\to \Gr_G$ by sending $m\mapsto ms^\lambda$. We
have the following
\end{s}
\begin{lem}\label{Levi} Under the above embedding, $\overline{\Gr}_M^{p(\lambda)}\subset
\Gr_M\cap\overline{\Gr}_G^\lambda$. If the Dynkin subdiagram for
$M$ contains some vertex $i$ of the Dynkin diagram of $G$
corresponding a long root $\check{\alpha}_i$ of $G$, then the
restriction of $\calL_G$ to $\Gr_M$ is isomorphic to $\calL_M$.
\end{lem}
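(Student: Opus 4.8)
The plan is to understand the embedding $\iota\colon\Gr_M\hookrightarrow\Gr_G$, $m\mapsto m\cdot s^\lambda$, precisely enough to locate where the distinguished points go, deduce the first (set‑theoretic/scheme‑theoretic) containment from equivariance, and then settle the line‑bundle comparison by computing degrees on a single $T_M$‑invariant $\bbP^1$ via Lemma \ref{ration curves}.

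First I would pin down the base point. Writing $\lambda=p(\lambda)+(\lambda-p(\lambda))$, the factor $t^{\lambda-p(\lambda)}$ lies in $Z(L)^0_\K$ and is therefore central in $M_\K$, so for $m\in M_\K$ one has $\mathrm{Ad}_{t^{-\lambda}}(m)=\mathrm{Ad}_{t^{-p(\lambda)}}(m)$; combined with $M_\K\cap G_\calO=M_\calO$ this gives $\mathrm{Stab}_{M_\K}(s^\lambda)=\mathrm{Ad}_{t^{p(\lambda)}}(M_\calO)$. Hence the closed embedding of \cite{MOV} Lemma 3.2 is $M_\K$‑equivariant and sends $s^{p(\lambda)}\in\Gr_M$ to $s^\lambda\in\Gr_G$ (more generally $s^\mu_M\mapsto s^{\tilde\mu}$ with $p(\tilde\mu)=\mu$). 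The first assertion is then immediate: since $\iota$ is $M_\calO$‑equivariant and $M_\calO\subset G_\calO$, we get $\iota(\Gr_M^{p(\lambda)})=\iota(M_\calO\, s^{p(\lambda)})=M_\calO\,s^\lambda\subset G_\calO\,s^\lambda=\Gr_G^\lambda$, and as $\iota$ is a closed (hence proper) embedding, passing to closures yields $\overline{\Gr}_M^{p(\lambda)}\subset\Gr_M\cap\overline{\Gr}_G^\lambda$.

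For the second assertion I would compare $\calL_G|_{\Gr_M}$ with $\calL_M$ by their degrees on rational curves. Take $\mu=\omega_i\in\Lambda_M$, so $\langle\mu,\check\alpha_i\rangle=1$, and form the $T_M$‑invariant line $C=S_{\check\alpha_i}\cdot s^\mu\subset\Gr_M$. Since $\check\alpha_i$ is a root of both $M$ and $G$ and its pairing with $\Lambda_G$ factors through $p$, we have $\langle\tilde\mu,\check\alpha_i\rangle=\langle\mu,\check\alpha_i\rangle=1$, and by $M_\K$‑equivariance $\iota(C)=S_{\check\alpha_i}\cdot s^{\tilde\mu}$. Because $\check\alpha_i$ is long in $G$, Lemma \ref{ration curves} gives $\deg\big(\calL_G|_{\iota(C)}\big)=\tfrac{2\langle\tilde\mu,\check\alpha_i\rangle}{(\check\alpha_i,\check\alpha_i)^*_G}=1$. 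On the other hand $\check\alpha_i$, being a longest root of $G$ and a root of $M$, is a fortiori a longest root of $M$, so $(\check\alpha_i,\check\alpha_i)^*_M=2$; applying Lemma \ref{ration curves} inside $\Gr_M$ gives $\deg\big(\calL_M|_C\big)=\tfrac{2}{(\check\alpha_i,\check\alpha_i)^*_M}=1$. Thus the two line bundles agree in degree along $C$; reducing to the simple factor of $M$ carrying the vertex $i$, where the relevant component of $\Gr_M$ has Picard group $\Z$ detected by this single $\bbP^1$, I conclude $\calL_G|_{\Gr_M}\cong\calL_M$.

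The hard part will be the normalization bookkeeping in the final step: the degree‑one conclusion rests on the coincidence $(\check\alpha_i,\check\alpha_i)^*_G=2=(\check\alpha_i,\check\alpha_i)^*_M$, and the first equality holds \emph{only} because $\check\alpha_i$ is long in $G$. This is exactly where the hypothesis (and the simply‑laced restriction elsewhere in the paper, cf.\ Remark \ref{nonsimply-laced}) is used: for a short root the restriction would land on a nontrivial power of $\calL_M$. I would also keep careful track of the case where $M$ is not simple, running the degree comparison factor by factor, so that the clean statement $\calL_G|_{\Gr_M}\cong\calL_M$ is understood on the simple factor containing the long‑root vertex $i$.
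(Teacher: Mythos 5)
Your proposal is correct and takes essentially the same route as the paper: the paper also picks the point of $\Gr_M$ corresponding to the fundamental coweight (shifted by the central part $\lambda-p(\lambda)$, i.e.\ $\mu=\lambda-p(\lambda)+\omega_i$), applies Lemma \ref{ration curves} to see that $\calL_G$ has degree one on the rational curve $S_{\check{\alpha}_i}s^\mu\subset\Gr_M$ because $\check{\alpha}_i$ is long, and concludes from the structure of $\Pic(\Gr_M)$. Your extra steps --- the stabilizer computation $\mathrm{Stab}_{M_\K}(s^\lambda)=\mathrm{Ad}_{t^{p(\lambda)}}(M_\calO)$ giving the first containment by equivariance, and the explicit check that $\calL_M$ also has degree one on that curve since a $G$-long root is $M$-long --- only make explicit what the paper leaves implicit.
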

\begin{proof} Let us prove the second statement. Without loss of generality, we could assume $M$ is the derived group of
a standard Levi factor in $G$. Let
$\mu=\lambda-p(\lambda)+\omega_i$. Then $s^\mu\in \Gr_M$, and
$S_{\check{\alpha}_i}s^\mu\subset \Gr_M$. Furthermore, the
restriction of $\calL_G$ to the rational curve
$S_{\check{\alpha}_i}s^\mu$ is $\calO(1)$ by Lemma \ref{ration
curves}, since $\check{\alpha}_i$ is a long root. Therefore, the
restriction of $\calL_G$ to $\Gr_M$ must be $\calL_M$.
\end{proof}

\begin{s}\label{addimisible} Recall the definition of
$\sigma_\lambda$ from Proposition \ref{surjectivity}. Define
$U^{\mu}_\lambda=\overline{\Gr}_G^{\lambda}-\mbox{supp}(\sigma_{\mu})$.
It is clear that $U^{\mu}_\lambda$ is an affine open subscheme of
$\overline{\Gr}_G^{\lambda}$ containing $s^\mu$ as the unique
$T$-fixed point. Therefore, $\{U^{\mu}_\lambda\}$ for all $\mu$
such that $s^\mu\in \overline{\Gr}_G^\lambda$ form an open cover
of $\overline{\Gr}_G^{\lambda}$, for the complement of the union
of these open subsets is closed without any $T$-fixed point and
thus must be an empty set. We will denote $U^\lambda_\lambda$
simply by $U^\lambda$. Let $N^\lambda$ be the unipotent subgroup
of $G_\calO$ generated by $U_{n\check{\delta}+\check{\alpha}}$
with $\check{\alpha}$ positive root and $0\leq
n<\langle\lambda,\check{\alpha}\rangle$. Then $N^\lambda\cdot
s^\lambda=U^\lambda$.

We will call an $m$-dimensional $T$-invariant subvariety $Z$ in
$\overline{\Gr}_G^\lambda$ admissible, if there exists some affine
open subset of $Z$, which contains some open subset of
$U_{n_1\check{\delta}+\check{\alpha}_{i_1}}\cdots
U_{n_m\check{\delta}+\check{\alpha}_{i_m}}s^\mu$, where
$s^\mu\in\overline{\Gr}_G^\lambda$,
$n_1\check{\delta}+\check{\alpha}_{i_1},\ldots,n_m\check{\delta}+\check{\alpha}_{i_m}$
are distinct, and $n_j\geq 0$. Observe that any $T$-invariant
curve in $\Gr_G^\lambda$ is admissible. Furthermore, $U^\lambda$
is admissible.
\end{s}

From now on, we assume that $G$ is simple simply-laced algebraic
group. We will prove Theorem \ref{reduced main theorem } by case
by case considerations. We embed $\Gr_G$ into the projective space
by $\calL_G$. The way we label the Dynkin diagram follows
Bourbaki's notation (cf. \cite{Bour}, PLATE).

\begin{prop}\label{Minuscule}
Theorem \ref{reduced main theorem } holds if $\lambda$ is a
minuscule coweight.
\end{prop}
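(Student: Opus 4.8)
The plan is to settle the statement by a pure dimension count against the surjection provided by Proposition \ref{surjectivity}, exploiting the two features of minuscule coweights. Write $\lambda=\omega_i$ for the minuscule coweight in question (the case $\lambda=0$ being trivial). The decisive point is that the only dominant coweight $\mu\leq\omega_i$ is $\omega_i$ itself, so the orbit $\Gr_G^{\omega_i}$ is already closed and $\overline{\Gr}_G^{\omega_i}=\Gr_G^{\omega_i}\cong G/P_i$ is a smooth partial flag variety. By \ref{first reduction} the fixed locus $(\overline{\Gr}_G^{\omega_i})^T$ is then supported exactly on the points $\{s^{w\omega_i}\}$, which are in natural bijection with the Weyl orbit $W\omega_i$; let $N=|W\omega_i|=|W/W_{P_i}|$ denote their number.

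First I would compute the dimension of the source $\Gamma(\overline{\Gr}_G^{\omega_i},\calL_G)$. Using Lemma \ref{ration curves} (with a long root $\check{\alpha}$ and $\langle\omega_i,\check{\alpha}\rangle=1$, $n=0$), the line bundle $\calL_G$ has degree one on the $T$-invariant lines $S_{\check{\alpha}}s^{\omega_i}$, so under $\overline{\Gr}_G^{\omega_i}\cong G/P_i$ it must restrict to the ample generator $\calO(\check{\omega}_i)$ of the Picard group. The finite-dimensional Borel--Weil theorem then identifies $\Gamma(\overline{\Gr}_G^{\omega_i},\calL_G)^*$ with the minuscule irreducible $V^{\check{\omega}_i}$. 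Since every weight space of a minuscule representation is one-dimensional and its weights form the single orbit $W\check{\omega}_i$, I get $\dim\Gamma(\overline{\Gr}_G^{\omega_i},\calL_G)=N$.

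Next I would bound the target from below. The target $\Gamma((\overline{\Gr}_G^{\omega_i})^T,\calO_{(\overline{\Gr}_G^{\omega_i})^T}\otimes\calL_G)$ has dimension equal to the $\C$-length of the finite scheme $(\overline{\Gr}_G^{\omega_i})^T$, which is at least the number $N$ of its supporting points. Combining this with the surjection of Proposition \ref{surjectivity} yields
\[
N=\dim\Gamma(\overline{\Gr}_G^{\omega_i},\calL_G)\geq\dim\Gamma((\overline{\Gr}_G^{\omega_i})^T,\calO_{(\overline{\Gr}_G^{\omega_i})^T}\otimes\calL_G)\geq N,
\]
so every inequality is an equality and the surjection is an isomorphism. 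Its kernel is exactly $\Gamma(\overline{\Gr}_G^{\omega_i},\I^{\omega_i}(1))$, which therefore vanishes; this is Theorem \ref{reduced main theorem } for $\lambda=\omega_i$. (As a by-product the fixed-point scheme is forced to be reduced, in agreement with the smoothness of $G/P_i$.)

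There is no hard geometry here: the only thing that really needs care, and the crux of the argument, is matching the two dimensions. Minusculeness is used twice and in two different guises --- it makes the orbit closed so that the Demazure module is the \emph{minuscule} $G$-representation of dimension $N$, and it makes every weight multiplicity one so that $N$ also counts the $T$-fixed points. Once both quantities equal $|W/W_{P_i}|$, the surjectivity of Proposition \ref{surjectivity} closes the gap automatically, and no independent injectivity estimate is needed.
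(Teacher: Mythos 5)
Your proof is correct, and your use of Proposition \ref{surjectivity} is legitimate (its proof is independent of Theorem \ref{reduced main theorem }), but your route is genuinely different from the paper's. You argue by a squeeze: the source $\Gamma(\overline{\Gr}_G^{\omega_i},\calL_G)$ is dual to the minuscule representation $V^{\check{\omega}_i}$, hence has dimension $N=|W\omega_i|$, while the target has dimension equal to the length of the finite scheme $(\overline{\Gr}_G^{\omega_i})^T$, which is at least its number $N$ of support points; the surjectivity of Proposition \ref{surjectivity} then forces an isomorphism and kills the kernel $\Gamma(\overline{\Gr}_G^{\omega_i},\I^{\omega_i}(1))$. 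The paper instead proves vanishing directly: a section $\sigma$ of $\I^{\omega_i}(1)$ vanishes at every $T$-fixed point, hence along every $T$-invariant rational curve (these have degree one by Lemma \ref{ration curves}), and then, by induction on the dimension of admissible $T$-invariant subvarieties, everywhere --- each point of an $r$-dimensional admissible subvariety lies on a translated degree-one curve $gS_{\check{\alpha}_{i_r}}s^{\omega_i}$ whose two $T$-fixed points lie in $(r-1)$-dimensional admissible subvarieties. The paper explicitly concedes that its proof ``is not the simplest one'' and ``requires no knowledge about minuscule representations''; yours is essentially the simpler argument being alluded to. What the longer proof buys is exactly what your count cannot provide: it is the prototype for Proposition \ref{higest coroot}, Theorem \ref{type D} and Proposition \ref{type E,II}, where the Demazure module is no longer irreducible, weight multiplicities exceed one, and the fixed-point scheme is non-reduced, so the crude bound ``length $\geq$ number of support points'' is far from sharp and no squeeze is available. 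Two small points worth making explicit in your write-up: (i) pinning down $\calL_G|_{G/P_i}$ as the ample generator from its degree on a single curve uses that $P_i$ is a maximal parabolic, so $\Pic(G/P_i)\cong\Z$ with generator $\calO(\check{\omega}_i)$, and that this generator itself has degree one on $S_{\check{\alpha}}s^{\omega_i}$ (via the simply-laced identification $\iota\omega_i=\check{\omega}_i$); this restriction statement is also recorded in \ref{subschubert}. (ii) Your by-product, the reducedness of $(\overline{\Gr}_G^{\omega_i})^T$, is consistent with Lemma \ref{smoothness} applied to the smooth variety $G/P_i$.
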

\begin{proof}
We shall point out that the proof presented here is not the
simplest one. The reason, however, that we adapt it here is two
folded. First, the proof itself is self-contained, and requires no
knowledge about minuscule representations. Secondly and more
importantly, it serves as a prototype for the discussions for
non-minuscule fundamental coweights.

Recall that if $\lambda$ is minuscule, then
$\langle\lambda,\check{\alpha}\rangle\leq 1$ for any
$\check{\alpha}$ positive root. Denote
$\Delta_\lambda=\{\check{\alpha}\in\Delta_+,\langle\lambda,\check{\alpha}\rangle=1\}$.
In this case $\overline{\Gr}_G^\lambda=\Gr_G^\lambda=G/P_\lambda$,
where $P_\lambda$ is the parabolic subgroup corresponding to
$\lambda$, i.e. $P_\lambda$ is generated by $B^-$ and
$U_{\check{\alpha}}$ for $\check{\alpha}$ positive root not
contained in $\Delta_\lambda$. Let $\sigma\in
H^0(\Gr_G^\lambda,\I^\lambda(1))$, then $\sigma$ vanishes on all
$T$-fixed points $s^\mu$, with $\mu=w\lambda$ for some $w\in W$.
For those $\check{\alpha}$ such that
$\langle\lambda,\check{\alpha}\rangle=1$, by Lemma \ref{ration
curves}, $S_{\check{\alpha}}\cdot s^\lambda$ is a rational curve,
with two $T$-fixed points $s^\lambda$ and $s^{\lambda-\alpha}$.
Since the restriction of $\calL_G$ on $S_{\check{\alpha}}\cdot
s^\lambda$ is isomorphic to $\calO(1)$ and $\sigma$ vanishes at
$s^\lambda$ and $s^{\lambda-\alpha}$,
$\sigma|_{S_{\check{\alpha}}\cdot s^\lambda}=0$. Same argument
shows that $\sigma$ in fact vanishes along any $T$-invariant
curves in $\Gr_G^\lambda$.

Assume that $\sigma$ vanishes along those $(r-1)$-dimensional
admissible $T$-invariant subvarieties in $\Gr_G^\lambda$. We prove
it also vanishes along those $r$-dimensional admissible
$T$-invariant subvarieties.  Let $p$ be a closed point of
$\Gr_G^\lambda$ contained in some $r$-dimensional admissible
$T$-invariant subvariety. Without loss of generality, we could
assume $p\in U_{\check{\alpha}_{i_1}}\cdots
U_{\check{\alpha}_{i_r}} s^\lambda$, with
$\check{\alpha}_{i_1},\ldots,\check{\alpha}_{i_r}$ distinct, and
$\check{\alpha}_{i_r}\in\Delta_\lambda$. Therefore, one could
write $p=gp'$ with $g\in U_{\check{\alpha}_{i_1}}\cdots
U_{\check{\alpha}_{i_{r-1}}}$ and
$p'=\exp(ce_{\check{\alpha}_{i_r}}) s^\lambda$ for some $c\in \C$.
Let $C=gS_{\check{\alpha}_{i_r}}s^\lambda$ be the translation of
$S_{\check{\alpha}_{i_r}}s^\lambda$ by $g$. Then $C$ is a rational
curve of degree one in $\Gr_G^\lambda$, for $\calL_G$ is
$G_\calO$-linearized. Observe that $p\in
gS_{\check{\alpha}_{i_r}}s^\lambda$, and that $gs^\lambda$ and
$gs^{\lambda-\alpha_{i_r}}$ are also contained in this curve.
Furthermore, they belong to $(r-1)$-dimensional admissible
$T$-invariant subvarieties of $\Gr_G^\lambda$. Since $\sigma$
vanishes at $gs^\lambda$ and $gs^{\lambda-\alpha_{i_r}}$ by
induction, $\sigma$ vanishes along $C$, and in particular, at $p$.
\end{proof}

\begin{thm} Theorem \ref{Main Theorem} holds for simple algebraic groups of type
$A$.
\end{thm}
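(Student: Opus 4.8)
The plan is to reduce everything to the minuscule case that has already been settled in Proposition \ref{Minuscule}. By Proposition \ref{factorization}, together with the first reductions of \S \ref{first reduction}, it suffices to establish Theorem \ref{reduced main theorem } when $\lambda$ is a fundamental coweight, and we may assume $G$ is of adjoint type, i.e. $G=PGL_n$. The feature special to type $A$ is that \emph{every} fundamental coweight of $A_{n-1}$ is minuscule. Granting this, I would apply Proposition \ref{Minuscule} directly to each $\omega_i$ to obtain $\Gamma(\overline{\Gr}_G^{\omega_i},\I^{\omega_i}(1))=0$, and then invoke Proposition \ref{factorization} repeatedly to propagate this vanishing to an arbitrary dominant coweight $\lambda=\sum_i c_i\omega_i$. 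This yields Theorem \ref{reduced main theorem } in type $A$, and hence Theorem \ref{Main Theorem} in this type via the surjectivity in Proposition \ref{surjectivity}.

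The one point to record is the minuscule claim, which I would verify directly from the definition in \ref{minuscule coweight}. Recall that $\langle\omega_k,\check{\alpha}_l\rangle=\delta_{kl}$ for the simple roots $\check{\alpha}_l$. Every positive root of $A_{n-1}$ has the form $\check{\alpha}=\check{\alpha}_i+\check{\alpha}_{i+1}+\cdots+\check{\alpha}_j$ for some $i\leq j$, so $\langle\omega_k,\check{\alpha}\rangle$ equals $1$ when $i\leq k\leq j$ and $0$ otherwise; in either case it is $\leq 1$. Hence $\langle\omega_k,\check{\alpha}\rangle\leq 1$ for every positive root $\check{\alpha}$ and every $k$, so each fundamental coweight $\omega_k$ is minuscule. (Equivalently, the corresponding representations are the exterior powers $\wedge^k$ of the standard representation, all of which are minuscule; see \cite{Bour}.)

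There is no genuine obstacle in type $A$, and this is precisely because of the observation above: nothing beyond the already-proven minuscule case is required. The real difficulties of Theorem \ref{reduced main theorem } surface only for the non-minuscule fundamental coweights occurring in types $D$ and $E$, where $\overline{\Gr}_G^{\omega_i}$ is singular and the inductive vanishing argument of Proposition \ref{Minuscule} must be strengthened — using the admissible $T$-invariant subvarieties of \ref{addimisible} together with the restriction-to-Levi Lemma \ref{Levi} — to account for the additional $T$-fixed points and the more intricate ruling by degree-one rational curves. For type $A$ none of this extra machinery is needed, which is why it serves as the base case of the argument.
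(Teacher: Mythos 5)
Your proposal is correct and follows exactly the paper's own route: the paper's entire proof is the observation that every fundamental coweight of a simple algebraic group of type $A$ is minuscule, after which Proposition \ref{Minuscule}, Proposition \ref{factorization}, and Proposition \ref{surjectivity} combine precisely as you describe. Your explicit verification that $\langle\omega_k,\check{\alpha}_i+\cdots+\check{\alpha}_j\rangle\leq 1$ for all positive roots is a correct (and welcome) elaboration of the fact the paper takes as known.
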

\begin{proof} Every fundamental coweight is minuscule for
simple algebraic groups of type $A$.
\end{proof}

\begin{prop}\label{higest coroot}
Assume $G$ is a simple algebraic group of type $A$, $D$, or $E$.
Then Theorem \ref{reduced main theorem } holds if $\lambda$ is the
highest coroot.
\end{prop}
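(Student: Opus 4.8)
Write $\lambda=\theta$ for the highest coroot; since $G$ is simply-laced this is also the coroot of the highest root, so $\langle\theta,\check\theta\rangle=2$. The goal is to show $\Gamma(\overline{\Gr}_G^\theta,\I^\theta(1))=0$. Here $\overline{\Gr}_G^\theta=\Gr_G^\theta\sqcup\{s^0\}$ has only these two strata, its $T$-fixed points are the $s^{w\theta}$ ($w\in W$) together with $s^0$, and it is reduced; so it suffices to prove that every $\sigma\in\Gamma(\overline{\Gr}_G^\theta,\I^\theta(1))$ vanishes on the dense orbit $\Gr_G^\theta$. My plan is to imitate the curve-by-curve propagation of Proposition \ref{Minuscule}. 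The one new phenomenon is that $\theta$ is not minuscule: by Lemma \ref{ration curves} the direction $\check\theta$ produces a rational curve $C=S_{\check\theta}s^\theta$ on which $\calL_G$ has degree $2$, joining the fixed points $s^\theta$ and $s^{-\theta}$, and a section vanishing only at these two endpoints need not vanish on $C$. Every other direction occurring in the open cell $U^\theta=N^\theta s^\theta$ gives a degree-one curve: the affine direction $\check\delta+\check\theta$ joins $s^\theta$ to $s^0$, and each finite $\check\alpha$ with $\langle\theta,\check\alpha\rangle=1$ behaves exactly as in the minuscule case.

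To deal with $C$ I would pass to the rank-one subgroup $M=S_{\check\theta}$ attached to the long root $\check\theta$, an $SL_2$ or $PGL_2$, and use its closed embedding $\Gr_M\hookrightarrow\Gr_G$. Since $\langle\theta,\check\theta\rangle=2$, the coweight $\theta$ restricts to $2\varpi$ on $M$, so $\Sigma:=\overline{\Gr}_M^{2\varpi}\subset\overline{\Gr}_G^\theta$ is a $T$-invariant projective surface containing $C$, the points $s^{\pm\theta}$, and the singular point $s^0$. The degree-one curve $S_{\check\delta+\check\theta}s^\theta\subset\Gr_M$ (Lemma \ref{ration curves}) shows, as in the proof of Lemma \ref{Levi}, that $\calL_G|_\Sigma\cong\calL_M$. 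Because $2\varpi=\varpi+\varpi$ with $\varpi$ minuscule, Proposition \ref{Minuscule} and Proposition \ref{factorization} applied to $M$ yield $\Gamma(\Sigma,\I_\Sigma(1))=0$, where $\I_\Sigma$ is the ideal of the $T$-fixed subscheme $\Sigma^T$.

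I would then conclude $\sigma|_\Sigma=0$. As $\Sigma$ is a $T$-invariant closed subscheme, the fixed-point functor gives $\Sigma^T=\Sigma\cap(\overline{\Gr}_G^\theta)^T$, so that $\I_\Sigma$ is the image of $\I^\theta$ in $\calO_\Sigma$. Hence $\sigma$, which lies in $\Gamma(\I^\theta(1))$ by hypothesis, restricts to a section of $\I_\Sigma(1)$, where it is forced to vanish. The point is that this automatically captures the non-reduced part of $\Sigma^T$ at $s^0$, where the fixed scheme is a length-two double point whose extra tangent direction is the zero-weight one (locally $\Sigma$ is the quadric cone $xy=z^2$ with $s^0$ its vertex); no rational curve inside $\Sigma$ runs in that direction, so the required infinitesimal vanishing could not be produced by curves, but it is supplied for free by the inclusion $\Sigma^T\subseteq(\overline{\Gr}_G^\theta)^T$. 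Thus $\sigma|_\Sigma\in\Gamma(\Sigma,\I_\Sigma(1))=0$, and in particular $\sigma|_C=0$.

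Finally I would run the dimension induction of Proposition \ref{Minuscule} over admissible $T$-invariant subvarieties, enlarging its base case by the single degree-two curve $C$ on which $\sigma$ has just been shown to vanish. Because $\check\theta$ is the only degree-two direction occurring in $U^\theta$, one can always keep it among the outer factors $g$, so that the curve used at each propagation step is a degree-one curve; its two endpoints then lie on lower-dimensional admissible subvarieties where $\sigma$ already vanishes by induction, with the chain of $\check\theta$-containing subvarieties bottoming out at $C$. The vanishing thus spreads from $C$ and from the fixed points across the dense cell $U^\theta=N^\theta s^\theta$, and since $\overline{\Gr}_G^\theta$ is reduced this forces $\sigma=0$. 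I expect the genuine obstacle to be exactly the degree-two curve $C$ together with the singular point $s^0$: everything else is the minuscule argument verbatim, and the whole proof hinges on reducing $C$ to the rank-one surface $\Sigma$ and reading off the delicate infinitesimal condition at $s^0$ from $\Sigma^T\subseteq(\overline{\Gr}_G^\theta)^T$.
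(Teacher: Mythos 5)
Your proposal follows the paper's proof almost step for step: your surface $\Sigma=\overline{\Gr}_M^{2\varpi}\cong\overline{\Gr}_{SL_2}^{2}$ is exactly the paper's $Z_\theta=\overline{M_\calO s^\theta}$, the identification $\calL_G|_\Sigma\cong\calL_M$ via Lemma \ref{Levi}, the reduction of the $SL_2$-case to Proposition \ref{Minuscule} plus Proposition \ref{factorization}, and the subsequent induction over admissible $T$-invariant subvarieties using only degree-one curves are all the paper's argument. Your observation that $\Sigma^T$ has a length-two component at $s^0$ pointing in the zero-weight direction (the vertex of the quadric cone), so that curves alone can never produce the required vanishing there, is correct and is a good explanation of why the surface is indispensable.

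Three points, however, need patching before this is a proof. First, and most substantively, your base case is too small: you insist that only \emph{the single} degree-two curve $C$ is needed because the vanishing only has to spread across the dense cell $U^\theta$. But the propagation curves exit that cell: a degree-one curve $gS_{\check\psi_r}s^\theta$ has its second endpoint at $gs^{\theta-\alpha_r}$ (or $gs^0$), which lies on an $(r-1)$-dimensional admissible subvariety based at a fixed point $s^{w\theta}\neq s^\theta$. Recursing, the induction hypothesis must cover admissible subvarieties based at \emph{every} $s^{w\theta}$, and their one-dimensional members include the degree-two curves $S_{\check\alpha}s^{\alpha}$ joining $s^{\pm\alpha}$ for every coroot $\alpha\in W\theta$, not just $\alpha=\theta$. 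The fix is cheap — repeat (or $W$-translate) the surface construction to get $Z_\alpha$ for every positive coroot, which is what the paper does in one sentence — but as written your induction does not close. Second, ``one can always keep $\check\theta$ among the outer factors'' is an assertion, not an argument: the groups $U_{\check\psi}$ do not commute, and the paper devotes the end of its proof to precisely this step (interchange $U_{\check\psi_{r-1}}$ and $U_{\check\theta}$ when $[e_{\check\theta},e_{\check\psi_{r-1}}]=0$, and otherwise use the containment $U_{n\check\delta+\check\beta}U_{\check\theta}s^\theta\subset U_{\check\theta}U_{n\check\delta+\check\beta+\check\theta}s^\theta$ to return to the previous case). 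Third, your deduction of $\Gamma(\Sigma,\I_\Sigma(1))=0$ from the $SL_2$-case silently identifies the fixed-point subscheme of $\Sigma$ under the big torus $T$ with the one under $T_M$, to which Propositions \ref{Minuscule} and \ref{factorization} for $M$ actually apply; since $T_M\subsetneq T$, one has a priori only $\Sigma^{T}\subseteq\Sigma^{T_M}$, hence $\I_{\Sigma^{T}}\supseteq\I_{\Sigma^{T_M}}$, and the vanishing statement does not transfer formally. The needed scheme-theoretic equality $(\Gr_M)^{T}=(\Gr_M)^{T_M}$ is exactly Corollary \ref{fixed-point in Levi}, which the paper invokes at this point and which you should cite as well.
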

\begin{proof}
Recall that in the case $G$ is simply-laced, the highest coroot is
just $\theta$. Let $P$ be a minimal parabolic subgroup of $G$
contains $U_{\check{\theta}}$ and $U_{-\check{\theta}}$. There is
an obvious Levi subgroup of $P$ whose derived group is $M\cong
PSL_2$, and
$\mbox{Lie}M=\C\{e_{\check{\theta}},\theta,e_{-\check{\theta}}\}$.
Let $\Gr_M\to \Gr_G$ be the embedding, induced from $M_\K\to
\Gr_G$ by sending $m\mapsto ms^\theta$. Let
$Z_\theta=\overline{M_\calO s^\theta}\subset \Gr_M\cap
\overline{\Gr}_G^\theta$. Then
$Z_\theta\cong\overline{\Gr}_M^{p(\theta)}\cong\overline{\Gr}_{SL_2}^2$,
where we identify the coweight lattice of $\fraks\frakl_2$ with
$\Z$ by identifying the fundamental coweight with 1. This is
$T$-invariant. Corollary 1.2.8 implies that
$Z_\theta^T\cong(\overline{\Gr}_{SL_2}^2)^{T_{SL_2}}$.
Furthermore, the restriction of $\calL_G$ on
$\overline{\Gr}_G^\theta$ to $\overline{\Gr}_{SL_2}^2$ is
$\calL_{SL_2}$ by Lemma \ref{Levi}, since
$(\check{\theta},\check{\theta})^*=2$. Now let $\sigma\in
H^0(\overline{\Gr}_G^\theta,\I^\theta(1))$. Since Theorem
\ref{reduced main theorem } holds for $SL_2$. One obtains that
$\sigma|_{Z_\theta}=0$. Likewise, one can define $Z_\alpha$ for
any $\alpha$ positive coroot, and for the same reason,
$\sigma|_{Z_\alpha}=0$.

Now observe that $\langle\theta,\check{\alpha}\rangle\leq 1$ for
any $\check{\alpha}$ positive root other than $\check{\theta}$,
while $\langle\theta,\check{\theta}\rangle=2$. Denote
$\Delta_\theta=\{\check{\alpha} \mbox{ positive, }
\langle\check{\alpha},\theta\rangle=1\}$. Then one can use the
same method as in \ref{Minuscule} to prove that $\sigma$ indeed
vanishes along any $r$-dimensional admissible $T$-invariant
subvarieties in $\overline{\Gr}_G^\theta$. If $s^\alpha$ and
$s^\beta$ are connected by some $T$-invariant curve, then this
curve is rational of degree 1 unless $\alpha=-\beta$. For the
former case, $\sigma=0$ at $s^\alpha$ and $s^\beta$ implies that
$\sigma$ vanishes along this curve. For the latter case, the
rational curve is of degree 2, and is contained in $Z_\alpha$.
This proves the case $r=1$. $r>1$ is proved by induction as in
\ref{Minuscule}. Let $p$ be a closed point of
$\overline{\Gr}_G^\theta$ contained in some $r$-dimensional
addmissible $T$-invariant subvariety. Since
$\overline{\Gr}_G^\theta-\Gr_G^\theta=s^0$, we could assume $p\in
\Gr_G^\theta$. Observe that, as in \ref{Minuscule}, we could
further assume that \[p\in
U_{\check{\psi}_1}U_{\check{\psi}_2}U_{\check{\psi}_3}\cdots
U_{\check{\psi}_r} s^\theta\] where $\check{\psi}_i$ are distinct
and,
$\check{\psi}_r\in\{\check{\theta},\check{\delta}+\check{\theta}\}\cup\Delta_\theta$.
If $\check{\psi}_r\neq\check{\theta}$, one writes $p=gp'$ with
$g\in U_{\check{\psi}_1}\cdots U_{\check{\psi}_{r-1}}$ and
$p'=\exp(ce_{\check{\psi}_r})s^\theta$ for some $c\in \C$. Since
$\check{\psi}_r\neq\check{\theta}$, $p$ is contained in the degree
one rational curve $gS_{\check{\psi}_r}s^{\theta}$. (Observe that
$S_{\check{\delta}+\check{\theta}}s^\theta$ is rational of degree
one by Lemma \ref{ration curves}.) Now argue as in \ref{Minuscule}
to show $\sigma$ vanishes at $p$. Therefore, one could assume that
$\check{\psi}_r=\check{\theta}$. Let
$\check{\psi}_{r-1}=n_{r-1}\check{\delta}+\check{\beta}_{r-1}$. We
could further assume that
$\langle\check{\beta}_{r-1},\theta\rangle<0$. Otherwise, we can
simply interchange $U_{\check{\psi}_{r-1}}$ and
$U_{\check{\psi}_r}$ (since
$[e_{\check{\theta}},e_{n_{r-1}\check{\delta}+\check{\beta}_{r-1}}]=0$
in this case), and return to the case that
$\check{\psi}_r\neq\check{\theta}$. However, if
$\langle\check{\beta}_{r-1},\theta\rangle<0$,
\[U_{n_{r-1}\check{\delta}+\check{\beta}_{r-1}}U_{\check{\theta}}s^\theta\subset
U_{\check{\theta}}U_{n_{r-1}\check{\delta}+\check{\beta}_{r-1}+\check{\theta}}s^\theta\]
We still return to the case that
$\check{\psi}_r\neq\check{\theta}$.
\end{proof}

\begin{s}\label{subschubert} We need some more preparations in order to prove Theorem \ref{reduced main theorem } for algebraic groups of
type $D$. We temporarily do not assume that $G$ is simply-laced.
For any $\lambda\in\Lambda_G^+$, there is an embedding
$G/P_{\lambda}\cong G\cdot
s^\lambda\subset\overline{\Gr}_G^\lambda$, where $P_\lambda$ is
the parabolic subgroup of $G$ generated by $B^-$ and
$U_{\check{\alpha}}$ with $\check{\alpha}$ positive and
$\langle\check{\alpha},\lambda\rangle=0$. $s^{w\lambda}\in
G/P_\lambda$ will simply be denoted by $w\in G/P_\lambda$.

It is not difficult to see that the restriction of $\calL_G$ to
$G/P_{\lambda}$ is
$\calO(\iota\lambda)=G\times^{P_\lambda}\C_{\iota\lambda}$, where
$P_\lambda$ acts on $\C_{\iota\lambda}$ through the character
$\iota\lambda$. Therefore $\Gamma(G/P_\lambda,\calL_G)\cong
V^{\iota\lambda}$ by the Borel-Weil theorem. We have the natural
map
$\Gamma(\overline{\Gr}_G^\lambda,\calL_G)\to\Gamma(G/P_{\lambda},\calO(\iota\lambda))$.
Assume that $\lambda$ is not minuscule. Let
$Z=\overline{\Gr}_G^\lambda\setminus \Gr_G^\lambda$ endowed with
the reduced scheme structure and $\J$ be the sheaf of ideal
defining $Z$. It is clear that the composition
\[\Gamma(\overline{\Gr}_G^\lambda,\J\otimes\calL_G)\hookrightarrow\Gamma(\overline{\Gr}_G^\lambda,\calL_G)\twoheadrightarrow\Gamma(G/P_{\lambda},\calO(\iota\lambda))\]
is non-zero (e.g. $\sigma_\lambda\neq 0$ at $s^\lambda$) and
therefore surjective. Now assume that $G$ is simply-laced. We have
\end{s}
\begin{prop}Let $\lambda\in\Lambda_G^+$, which is not minuscule. If $\langle\lambda,\check{\alpha}\rangle\leq 2$ for
any positive root $\check{\alpha}$, then
$\Gamma(\overline{\Gr}_G^\lambda,\J\otimes\calL_G)\to\Gamma(G/P_{\lambda},\calO(\iota\lambda))$
is an isomorphism.
\end{prop}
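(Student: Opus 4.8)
The plan is to prove injectivity, since the surjectivity of the map has already been established in \ref{subschubert}. Thus it suffices to show that any $\sigma\in\Gamma(\overline{\Gr}_G^\lambda,\J\otimes\calL_G)$ whose image in $\Gamma(G/P_\lambda,\calO(\iota\lambda))$ vanishes is itself zero. Such a $\sigma$ vanishes on $Z=\overline{\Gr}_G^\lambda\setminus\Gr_G^\lambda$ by definition of $\J$, and on $G/P_\lambda=G\cdot s^\lambda$ by hypothesis. In particular $\sigma$ vanishes at every $T$-fixed point $s^{w\mu}$ of $\overline{\Gr}_G^\lambda$: those with $\mu$ in the $W$-orbit of $\lambda$ lie in $G/P_\lambda$, while those with $\mu<\lambda$ lie in $Z$. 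Since $\overline{\Gr}_G^\lambda$ is reduced and $U^\lambda=N^\lambda s^\lambda$ is open and dense, it is enough to propagate this vanishing to all of $U^\lambda$ by the rational-curve induction used in Propositions \ref{Minuscule} and \ref{higest coroot}.

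The key geometric input is Lemma \ref{ration curves} together with the hypothesis $\langle\lambda,\check\alpha\rangle\le 2$: for any $T$-fixed point $s^\mu$ in the $W$-orbit of $\lambda$ one has $|\langle\mu,\check\alpha\rangle|\le 2$ for every root $\check\alpha$, so in the simply-laced case every $T$-invariant rational curve $S_{\check\psi}s^\mu$ with $\check\psi=n\check\delta+\check\alpha$ has degree $\langle\mu,\check\alpha\rangle-n\in\{1,2\}$. A degree-two curve forces $n=0$ and $\langle\mu,\check\alpha\rangle=2$, hence $\check\psi=\check\alpha$ is a finite root and $S_{\check\alpha}\subset G$; such a curve therefore lies inside a single $G$-orbit, namely $G/P_\lambda$ when $\mu\in W\lambda$ and inside $Z$ otherwise. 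On either locus $\sigma$ already vanishes, so degree-two curves present no difficulty in the base case. By contrast, the affine directions $U_{\check\delta+\check\alpha}$ occurring in $N^\lambda$, which necessarily have $\langle\lambda,\check\alpha\rangle=2$, give only degree-one curves.

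Concretely, I would factor the unipotent group as $N^\lambda=N_{\mathrm{aff}}\cdot N_{\mathrm{fin}}$, where $N_{\mathrm{fin}}$ is generated by the finite root subgroups $U_{\check\alpha}$ with $\langle\lambda,\check\alpha\rangle\ge 1$ and $N_{\mathrm{aff}}$ by the affine ones $U_{\check\delta+\check\alpha}$. This is a valid factorization of varieties because the commutator of two affine generators would land at level $n\ge 2$, which the bound $\langle\lambda,\check\alpha\rangle\le 2$ excludes from $N^\lambda$, so the $t$-grading splits the Lie algebra as a direct sum. Then $N_{\mathrm{fin}}s^\lambda$ is the open cell of $G/P_\lambda$, on which $\sigma$ vanishes, and $U^\lambda=N_{\mathrm{aff}}\cdot(N_{\mathrm{fin}}s^\lambda)$. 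I would now induct on the number of affine factors exactly as in \ref{Minuscule}: peeling off the last (affine, hence degree-one) generator produces a degree-one rational curve whose two $T$-fixed endpoints lie in the previous stage $N_{\mathrm{aff}}^{(r-1)}\cdot(G/P_\lambda)$, where $\sigma$ vanishes by induction; vanishing at the two endpoints of a degree-one curve forces vanishing along it, and in particular at the given point. Density and reducedness then give $\sigma=0$.

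The main obstacle is precisely the degree-two curves, which would break the naive induction because a section of a degree-two line bundle on $\bbP^1$ is not determined by its values at two $T$-fixed points. The device above circumvents this by arranging that every curve actually peeled off in the induction is affine and hence of degree one, confining all degree-two (finite-root) directions to the base $G/P_\lambda$ where vanishing is given for free. This is the same mechanism, via commutation of root subgroups, that was used to treat $\check\psi_r=\check\theta$ in the proof of Proposition \ref{higest coroot}.
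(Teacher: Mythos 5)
Your argument is correct and is essentially the paper's own proof: the paper likewise observes that $\Gr_G^\lambda$ is an affine bundle over $G/P_\lambda$ with fiber $g\prod_{\langle\lambda,\check\alpha\rangle=2}U_{\check\delta+\check\alpha}s^\lambda$ (your factorization $N^\lambda=N_{\mathrm{aff}}\cdot N_{\mathrm{fin}}$), and propagates the vanishing of $\sigma$ along the degree-one rational curves $gS_{\check\delta+\check\alpha}s^\lambda$ via the induction of Propositions \ref{Minuscule} and \ref{higest coroot}. One bookkeeping correction: the second $T$-fixed endpoint of each peeled-off curve is $gs^{\lambda-\alpha}$, which lies in $Z$ rather than in the previous stage $N_{\mathrm{aff}}^{(r-1)}\cdot(G/P_\lambda)$, so the vanishing there comes from $\sigma\in\Gamma(\overline{\Gr}_G^\lambda,\J\otimes\calL_G)$ (a fact you recorded at the outset) and not from the induction hypothesis.
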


\begin{proof} Assume that
$\sigma\in\Gamma(\overline{\Gr}_G^\lambda,\calL_G)$ with
$\sigma|_{Z\cup G\cdot s^\lambda}=0$. We prove that $\sigma=0$.
Recall that $\Gr_G^\lambda$ is an affine bundle over $G/P_\lambda$
whose fiber at $gs^\lambda$ ($g\in G$) is
$g\prod_{\langle\check{\alpha},\lambda\rangle=2}U_{\check{\delta}+\check{\alpha}}s^\lambda$.
Observe that each $gS_{\check{\delta}+\check{\alpha}}s^\lambda$ is
a rational curve of degree one containing $gs^\lambda\in
G/P_\lambda$ and $gs^{\lambda-\alpha}\in Z$, at which $\sigma$
vanishes. Our method used in the proofs of Proposition
\ref{Minuscule} and Proposition \ref{higest coroot} indicates that
$\sigma$ vanishes along the fiber over any $p\in G/P_\lambda$.
\end{proof}

\begin{thm}\label{type D} Theorem \ref{reduced main theorem } holds for simple algebraic groups of type $D$.
\end{thm}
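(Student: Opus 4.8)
The plan is to invoke Proposition \ref{factorization} to reduce Theorem \ref{reduced main theorem } to the case $\lambda=\omega_i$ a fundamental coweight, and then to run through the fundamental coweights of $D_n$ in the Bourbaki labelling. The minuscule coweights $\omega_1,\omega_{n-1},\omega_n$ are settled by Proposition \ref{Minuscule}. Since the highest root of $D_n$ is $\check\theta=\check\omega_2$, the coweight $\omega_2$ is the highest coroot $\theta$ and is settled by Proposition \ref{higest coroot}. Thus only the coweights $\omega_i$ with $3\le i\le n-2$ remain; for each of these the coefficient of $\check\alpha_i$ in the highest root is $2$, so $\langle\omega_i,\check\alpha\rangle\le 2$ for every positive root $\check\alpha$, which is exactly the hypothesis of the proposition of \S\ref{subschubert}.

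I would treat these by induction on $i$ in steps of two, the base cases $\omega_1$ and $\omega_2$ being already done. The combinatorial input is that the boundary $Z:=\overline{\Gr}_G^{\omega_i}\setminus\Gr_G^{\omega_i}$ equals $\overline{\Gr}_G^{\omega_{i-2}}$; indeed one checks that the only dominant coweight obtained from $\omega_i$ by subtracting a single positive coroot is $\omega_{i-2}$ (in the standard orthonormal coordinates $\omega_i-\omega_{i-2}=\varepsilon_{i-1}+\varepsilon_i$ is a single positive coroot), and that it is the unique maximal dominant coweight strictly below $\omega_i$. Now let $\sigma\in\Gamma(\overline{\Gr}_G^{\omega_i},\I^{\omega_i}(1))$. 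Its restriction to $Z$ vanishes at every $T$-fixed point, hence lies in $\Gamma(\overline{\Gr}_G^{\omega_{i-2}},\I^{\omega_{i-2}}(1))$, which is zero by the inductive hypothesis. Therefore $\sigma\in\Gamma(\overline{\Gr}_G^{\omega_i},\J\otimes\calL_G)$, and by the proposition of \S\ref{subschubert} the restriction map to the closed orbit $G/P_{\omega_i}=G\cdot s^{\omega_i}$ is injective, so it suffices to prove that $\sigma|_{G/P_{\omega_i}}=0$.

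To prove this I would imitate the admissible-subvariety induction of Propositions \ref{Minuscule} and \ref{higest coroot}, carried out inside the smooth flag variety $G/P_{\omega_i}$ using only the finite root directions. The degree-one $T$-invariant curves $gS_{\check\alpha}s^{\omega_i}$ (those with $\langle\omega_i,\check\alpha\rangle=1$) propagate the vanishing out of the $T$-fixed points exactly as before. The genuinely new feature is the degree-two curves $S_{\check\alpha}s^{\omega_i}$ with $\langle\omega_i,\check\alpha\rangle=2$, on which vanishing at the two endpoints is not enough. These I would dispose of as $Z_\theta$ was in Proposition \ref{higest coroot}: for such an $\check\alpha$ the rank-one subgroup $M'=S_{\check\alpha}$ gives an embedded $\overline{M'_\calO s^{\omega_i}}\cong\overline{\Gr}_{SL_2}^2$ on which $\calL_G$ restricts to $\calL_{SL_2}$ by Lemma \ref{Levi} (all roots being long) and whose $T_G$-fixed locus coincides with its $T_{M'}$-fixed locus by Corollary \ref{fixed-point in Levi}; since Theorem \ref{reduced main theorem } holds for $SL_2$, $\sigma$ vanishes on this $\overline{\Gr}_{SL_2}^2$, and in particular on the degree-two curve. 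Degree-one propagation together with induction on the dimension of the admissible $T$-invariant subvarieties of $G/P_{\omega_i}$ then forces $\sigma|_{G/P_{\omega_i}}=0$.

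I expect the main obstacle to be the closed orbit $G/P_{\omega_i}$: because $\iota\omega_i$ is not minuscule, $\calO(\iota\omega_i)$ has sections vanishing at all $T$-fixed points without vanishing identically, so the bare argument of Proposition \ref{Minuscule} cannot apply. This is precisely what makes the bound $\langle\omega_i,\check\alpha\rangle\le 2$ essential (no curve of degree $\ge 3$ occurs) and what forces the $SL_2$-reduction for the degree-two directions. The delicate bookkeeping, in direct analogy with the end of the proof of Proposition \ref{higest coroot}, is to show that for any point of an $r$-dimensional admissible cell one may commute the unipotent factors $U_{\check\psi}$ so that the terminal direction is either a degree-one direction or lies in one of the embedded $\overline{\Gr}_{SL_2}^2$'s; this reduction, together with the verification that $\overline{\Gr}_G^{\omega_i}\setminus\Gr_G^{\omega_i}=\overline{\Gr}_G^{\omega_{i-2}}$, are the two points requiring care.
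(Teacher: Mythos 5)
Your first two reductions coincide exactly with the paper's: reduction to fundamental coweights via Proposition \ref{factorization}, disposal of $\omega_1,\omega_{\ell-1},\omega_\ell$ (minuscule) and $\omega_2=\theta$, the identification of the boundary with $\overline{\Gr}_G^{\omega_{i-2}}$, the induction on $i$ placing $\sigma$ in $\Gamma(\overline{\Gr}_G^{\omega_i},\J\otimes\calL_G)$, and the use of the proposition of \S\ref{subschubert} to reduce everything to showing $\sigma|_{G/P_{\omega_i}}=0$. The gap is in your final step. Your toolkit on the closed orbit consists of (a) degree-one curve propagation and (b) vanishing on the rank-one subvarieties $\overline{(S_{\check\alpha})_\calO s^{w\omega_i}}\cong\overline{\Gr}_{SL_2}^2$, whose intersection with $G/P_{\omega_i}$ is only the single degree-two curve $S_{\check\alpha}s^{w\omega_i}$. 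This cannot handle products of \emph{mutually orthogonal} roots in $\Delta_i=\{\check\alpha:\langle\omega_i,\check\alpha\rangle=2\}$, which exist as soon as $i\geq 4$ (e.g.\ $\check\beta_1=\varepsilon_1+\varepsilon_2$, $\check\beta_2=\varepsilon_3+\varepsilon_4$ for $\omega_4$ in $D_\ell$, $\ell\geq 6$). Concretely, the closure of $U_{\check\beta_1}U_{\check\beta_2}s^{\omega_4}$ is a surface $\bbP^1\times\bbP^1\subset G/P_{\omega_4}$ on which $\calL_G$ restricts to $\calO(2,2)$; your data give vanishing only on the four $T$-invariant boundary lines, and $\calO(2,2)$ has the nonzero section $x_0x_1y_0y_1$ vanishing on all of them. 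No commutation trick can rescue this: $U_{\check\beta_1}$ and $U_{\check\beta_2}$ commute (their sum is not a root), so the terminal direction is degree two either way, and a point $u_1u_2s^{\omega_4}$ with both $u_j\neq 1$ lies only in a \emph{translate} $u_1\cdot\overline{\Gr}_{SL_2}^2$, on which no vanishing is known; the translated degree-two curves through it meet the lower strata in just two points, which is insufficient in degree two. The paper's weight lemma makes this obstruction precise: the weight spaces $V^{\check{\omega}_i}(-\check{\omega}_{i-2k})$ with $k\geq 2$ are spanned by vectors $e_{\check\beta_{d,1}}\cdots e_{\check\beta_{d,k}}v_{w_d}$ with pairwise orthogonal $\check\beta_{d,j}\in w_d\Delta_i$, and killing these components requires vanishing on $k$-dimensional products, not curves.

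What the paper does instead, and what your proposal is missing, is an induction on the \emph{rank}: for each $w$ it embeds a Levi-type subgroup $M$ of type $D_i$ (with simple roots $\check\alpha_1,\ldots,\check\alpha_{i-1}$ and $\check\beta_i=\check\alpha_{i-1}+2\check\alpha_i+\cdots+2\check\alpha_{\ell-2}+\check\alpha_{\ell-1}+\check\alpha_\ell$) so that the closure of $\prod_{\check\alpha\in\Delta_i}U_{\check\alpha}U_{\check\delta+\check\alpha}s^{w\omega_i}$ is $Z_{w\omega_i}\cong\overline{\Gr}_M^{2\omega_i^M}$; Lemma \ref{Levi}, Corollary \ref{fixed-point in Levi} and the theorem for the smaller group $D_i$ then give $\sigma|_{Z_{w\omega_i}}=0$, hence vanishing on the full multi-dimensional sets $Z_w=\bigl(\prod_{\langle w\omega_i,\check\alpha\rangle=2}U_{\check\alpha}\bigr)w\subset G/P_{\omega_i}$. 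The conclusion is then purely representation-theoretic: a $T$-weight vector of $\Gamma(G/P_{\omega_i},\calO(\check\omega_i))=(V^{\check\omega_i})^*$ vanishing on every $Z_w$ is zero, by the dual-basis argument against the vectors $e_{\check\beta_{d,1}}\cdots e_{\check\beta_{d,k}}v_{w_d}$. Your rank-one reductions are the $i=3$ shadow of this (indeed $\Delta_3$ contains no orthogonal pairs, so your argument should go through for $\omega_3$), but for $i\geq 4$ you need the full type-$D_i$ Levi, not just $SL_2$'s.
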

\begin{proof} Observe in
this case, $\omega_1,\omega_{\ell-1},\omega_\ell$ are minuscule,
$\omega_2$ is the highest coroot. Therefore, the theorem holds for
$D_4$. Now assume that $\ell>4$. We are aiming to prove that
Theorem \ref{reduced main theorem } holds for $\omega_i, 3\leq
i\leq\ell-2$. Let
\[\check{\beta}_1=\check{\alpha}_1,\check{\beta}_2=\check{\alpha}_2,\ldots,\check{\beta}_{i-1}=\check{\alpha}_{i-1},\check{\beta}_i=\check{\alpha}_{i-1}+2\check{\alpha}_i+\cdots+2\check{\alpha}_{\ell-2}+\check{\alpha}_{\ell-1}+\check{\alpha}_\ell\]
Then $\check{\beta}_1,\ldots,\check{\beta}_i$ determines a
subgroup of $G$, of type $D_i$, which we denote by $M$. One can
show that $M$ is the derived group of a Levi subgroup of $G$.
Indeed, another set of simple roots
$\{\check{\alpha}'_1,\ldots,\check{\alpha}'_\ell\}$ of $G$ can be
chosen as
\[-\check{\alpha}_{i+1},\ldots,-\check{\alpha}_{\ell-1},-\check{\alpha}_1-\cdots-\check{\alpha}_{\ell-2}-\check{\alpha}_\ell,\check{\beta}_1,\ldots,\check{\beta}_i\]
Then $M$ is the derived group of a standard Levi factor for this
set of simple roots. Let $T_M$ be the maximal torus of $M$, whose
Lie algebra is generated by $\beta_1,\ldots,\beta_i$. Denote
$\Delta_i=\{\check{\alpha},\langle\omega_i,\check{\alpha}\rangle=2\}$.
Observe that
\[\prod_{\check{\alpha}\in\Delta_i}U_{\check{\alpha}}U_{\check{\delta}+\check{\alpha}}s^{\omega_i}=M_\calO
s^{\omega_i}\] We denote its closure by $Z_{\omega_i}$. Then
$Z_{\omega_i}\cong\overline{\Gr}_M^{2\omega^M_i}$, where
$\omega^M_i$ is the fundamental coweight of $M$ corresponding to
$\check{\beta}_i$. We have: (i) the pullback of $\calL_G$ to
$Z_{\omega_i}$ is isomorphic to $\calL_M$ by Lemma \ref{Levi};
(ii) $(Z_{\omega_i})^T\cong
(\overline{\Gr}_M^{2\omega^M_i})^{T_M}$ by Corollary
\ref{fixed-point in Levi}.

Let $\sigma\in H^0(\overline{\Gr}_G^{\omega_i},\I^{\omega_i}(1))$,
then by induction for $\ell$, $\sigma|_{Z_{\omega_i}}=0$.
Likewise, for any $w\omega_i$, one can define $Z_{w\omega_i}$ and
prove similarly that $\sigma|_{Z_{w\omega_i}}=0$. The goal is to
show that $\sigma=0$ and therefore
$H^0(\overline{\Gr}_G^{\omega_i},\I^{\omega_i}(1))=0$. Since
$H^0(\overline{\Gr}_G^{\omega_i},\I^{\omega_i}(1))$ is a
$T$-module, we could assume that $\sigma$ is a $T$-weight vector.

Recall the settings as in \ref{subschubert}. In this case we have
$\J$ the ideal defining $\overline{\Gr}_G^{\omega_{i-2}}$ in
$\overline{\Gr}_G^{\omega_i}$ and
$\Gamma(\overline{\Gr}_G^{\omega_i},\J\otimes\calL_G)\cong\Gamma(G/P_i,\calO(\check{\omega}_i))$,
where $P_i=P_{\omega_i}$ is the parabolic subgroup generated by
$B_-$ and $U_{\check{\alpha}}$ with
$\langle\omega_i,\check{\alpha}\rangle=0$. By induction on $i$, we
could assume that
$\sigma\in\Gamma(\overline{\Gr}_G^{\omega_i},\J\otimes\calL_G)$.
Then our theorem is a consequence of the following proposition.
\end{proof}

\begin{prop} Let $\sigma\in\Gamma(G/P_i,\calO(\check{\omega}_i))$.
If $\sigma$ is a $T$-weight vector (or equivalently,
$\mathrm{supp}(\sigma)$ is $T$-invariant), and vanishes along any
\[Z_w:=(\prod_{\langle
w\omega_i,\check{\alpha}\rangle=2}U_{\check{\alpha}})w, \ \ \ w\in
W\] then $\sigma=0$.
\end{prop}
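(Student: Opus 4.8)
The plan is to run the same degree-argument-plus-induction used in the proofs of Propositions \ref{Minuscule} and \ref{higest coroot}, now carried out inside the finite flag variety $G/P_i$ for the line bundle $\calO(\check\omega_i)$, with the subvarieties $Z_w$ playing the role that the $SL_2$-subvarieties $Z_\alpha$ played there, namely absorbing the degree-two directions. First I would record two facts. By the Borel-Weil identification of \ref{subschubert}, $\Gamma(G/P_i,\calO(\check\omega_i))\cong V^{\check\omega_i}$, and the $T$-fixed points of $G/P_i$ are the $s^{w\omega_i}$, $w\in W$. Since $s^{w\omega_i}\in Z_w$ and $\sigma|_{Z_w}=0$ by hypothesis, $\sigma$ vanishes at \emph{every} $T$-fixed point of $G/P_i$. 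Second, because $\langle w\omega_i,\check\alpha\rangle\leq 2$ for every root $\check\alpha$ (which holds for the coweights $\omega_i$ occurring in Theorem \ref{type D}), Lemma \ref{ration curves} shows that every $T$-invariant curve in $G/P_i$ has $\calO(\check\omega_i)$-degree $1$ or $2$; moreover a degree-two curve through $s^{w\omega_i}$ has the form $\overline{U_{\check\alpha}s^{w\omega_i}}$ with $\langle w\omega_i,\check\alpha\rangle=2$, hence lies inside $Z_w$.

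Next I would show that $\sigma$ vanishes along every $T$-invariant curve $C$. If $\deg C=1$, then $C\cong\bbP^1$ with $\calL_G|_C\cong\calO(1)$, and $\sigma$ already vanishes at both $T$-fixed endpoints of $C$; since a section of $\calO(1)$ on $\bbP^1$ with two distinct zeros is zero, $\sigma|_C=0$. If $\deg C=2$, then by the previous paragraph $C\subset Z_w$ for some $w$, so $\sigma|_C=0$ by hypothesis. This establishes the base case of the induction, namely vanishing along the one-dimensional admissible $T$-invariant subvarieties.

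I would then induct on the dimension $r$ of admissible $T$-invariant subvarieties exactly as in Proposition \ref{Minuscule}. Suppose $\sigma$ vanishes along all admissible $T$-invariant subvarieties of dimension $<r$, and let $p$ lie in an $r$-dimensional one, so $p\in U_{\check\psi_1}\cdots U_{\check\psi_r}s^{w\omega_i}$ with the $\check\psi_j$ distinct. If every $\check\psi_j$ satisfies $\langle w\omega_i,\check\psi_j\rangle=2$, then these root subgroups commute (the sum of two such roots would have $\check\alpha_i$-coefficient $4$ and is not a root), so $U_{\check\psi_1}\cdots U_{\check\psi_r}s^{w\omega_i}\subset Z_w$ and $\sigma(p)=0$ by hypothesis. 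Otherwise some $\check\psi_j$ is a degree-one direction; I would bring it to the outside and write $p=g\,p'$ with $g\in U_{\check\psi_1}\cdots U_{\check\psi_{r-1}}$ and $p'\in\overline{U_{\check\psi_j}s^{w\omega_i}}$, so that $C=g\,\overline{U_{\check\psi_j}s^{w\omega_i}}$ is a degree-one rational curve through $p$ (degree is preserved since $\calL_G$ is $G_\calO$-linearized) whose two endpoints lie in $(r-1)$-dimensional admissible subvarieties where $\sigma$ vanishes by induction; as in the base case $\sigma|_C=0$, whence $\sigma(p)=0$. Finally, since the admissible affine opens $U^\mu=N^\mu s^\mu$ cover $G/P_i$ by \ref{addimisible}, vanishing of $\sigma$ along all admissible subvarieties forces $\sigma=0$.

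The main obstacle will be the reordering step in the inductive case: to peel off a degree-one factor I must commute it past the remaining $U_{\check\psi_k}$, and the unipotent commutation relations can create new root subgroups, possibly altering which directions are degree-one and which are degree-two. This is exactly the difficulty met, and resolved by explicit commutation identities such as $U_{n_{r-1}\check\delta+\check\beta_{r-1}}U_{\check\theta}s^\theta\subset U_{\check\theta}U_{n_{r-1}\check\delta+\check\beta_{r-1}+\check\theta}s^\theta$, in the proof of Proposition \ref{higest coroot}. I would model the bookkeeping on that argument: whenever the outermost direction is forced to be degree-two, I would use the commutation relations among the $U_{\check\alpha}$ either to interchange it with a commuting degree-one direction or to absorb the configuration into some $Z_w$, in each case reducing to the case where the outermost direction is degree-one.
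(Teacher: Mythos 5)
Your strategy is genuinely different from the paper's, so let me first record what the paper actually does: it works representation-theoretically inside $V^{\check{\omega}_i}=\Gamma(G/P_i,\calO(\check{\omega}_i))^*$, uses the weight-vector hypothesis to reduce to $\sigma$ of weight $\check{\omega}_{i-2k}$, invokes a lemma (stated with proof left to the reader) that $V^{\check{\omega}_i}(-\check{\omega}_{i-2k})$ has a basis of vectors $u_d=e_{\check{\beta}_{d,1}}\cdots e_{\check{\beta}_{d,k}}v_{w_d}$ with the $\check{\beta}_{d,j}$ mutually orthogonal and pairing $2$ against $w_d\check{\omega}_i$, and then kills each coefficient of $\sigma=\sum_d\sigma_du_d^*$ by evaluating on $U_{\check{\beta}_{d,1}}\cdots U_{\check{\beta}_{d,k}}w_d\subset Z_{w_d}$, on which $u_d^*$ is the unique dual basis vector not vanishing identically. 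Your proposal instead re-runs the curve induction of Propositions \ref{Minuscule} and \ref{higest coroot} inside $G/P_i$. Your base case (degree-one curves have two zeros; degree-two curves lie in some $\overline{Z_w}$) and your all-degree-two case are correct, and it is harmless that you never use the weight-vector hypothesis, since the sections vanishing on $\bigcup_w Z_w$ form a $T$-submodule.

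The gap is that the decisive step --- the mixed case of the induction --- is not proved but deferred (``I would model the bookkeeping on that argument''), and your sketch of it is not yet an argument. Two distinct issues hide there. The first is easier than you fear: no new root subgroups are created if you move the \emph{innermost} degree-one factor (rather than an arbitrary one) next to $s^{w\omega_i}$, because every factor between it and the base point then pairs to $2$ with $w\omega_i$, and since $\langle w\omega_i,\check{\gamma}\rangle\leq 2$ for every root $\check{\gamma}$, a pairing-one root plus a pairing-two root would pair to $3$ and hence is not a root; so those subgroups literally commute with the factor being moved. The second issue is real and your sketch does not address it: your induction hypothesis must range over all admissible presentations, including ones based at reflected fixed points, because the endpoint $gs^{w\omega_i-\psi_j}$ produced in your own inductive step lies in a product $U_{\check{\psi}_1}\cdots\widehat{U}_{\check{\psi}_j}\cdots U_{\check{\psi}_r}s^{w\omega_i-\psi_j}$ in which some roots pair non-positively with the new base point. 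For such presentations one can face an innermost factor $\check{\psi}_r$ of pairing $2$ preceded by a factor $\check{\beta}$ of pairing $\leq 0$ with $\check{\beta}+\check{\psi}_r$ a root; these do not commute, and one needs absorption identities of exactly the kind used in Proposition \ref{higest coroot}, namely $U_{\check{\beta}}U_{\check{\psi}_r}s^{\mu}\subset U_{\check{\psi}_r}U_{\check{\beta}+\check{\psi}_r}s^{\mu}$ when $\langle\mu,\check{\beta}\rangle\leq 0$, together with a termination argument (e.g.\ each absorption raises the total pairing, which is bounded by $2r$). This bookkeeping is where the actual content of your approach lies; until it is written down, the proof is incomplete, whereas the paper's linear-algebra route bypasses it entirely at the cost of the unproved basis lemma.
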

\begin{proof} Recall that
$\Gamma(G/P_{\omega_i},\calL_G)=V^{\check{\omega}_i}$. The
anti-dominant weights appearing in $V^{\check{\omega}_i}$ are
$\check{\omega}_i,\check{\omega}_{i-2},\ldots$. Thus, without loss
of generality, we could assume that $\sigma\in
V^{\check{\omega}_i}(\check{\omega}_{i-2k})$ where $2k\leq i$. We
will deduce the proposition from the following lemma, whose proof
is left to the readers.  For any $w\in W$, the
$(-w\check{\omega}_i)$-weight space
$V^{\check{\omega}_i}(-w\check{\omega}_i)$ is one dimensional.
Pick up an extreme vector $0\neq v_w\in
V^{\check{\omega}_i}(-w\check{\omega}_i)$ for each $w$.
\begin{lem} $V^{\check{\omega}_i}(-\check{\omega}_{i-2k})$ has a
basis of the forms
\[e_{\check{\beta}_{d,1}}e_{\check{\beta}_{d,2}}\cdots
e_{\check{\beta}_{d,k}}v_{w_d},\ \ \ w_d\in W, d=1,2,\ldots,\dim
V^{\check{\omega}_i}(-\check{\omega}_{i-2k})\] with
$(\check{\beta}_{d,j},w_d\check{\omega}_i)^*=2$ and
$(\check{\beta}_{d,j},\check{\beta}_{d,j'})^*=0$ for $j\neq j'$.
\end{lem}
Observe that $(V^{\check{\omega}_i})^*\cong V^{\check{\omega}_i}$
via the inner product. Choose a basis of
$V^{\check{\omega}_i}(-\check{\omega}_{i-2k})$ of the forms
$u_d=e_{\check{\beta}_{d,1}}e_{\check{\beta}_{d,2}}\cdots
e_{\check{\beta}_{d,k}}v_{w_d}$ as in the lemma. Let $\{u^*_d\}$
be the basis of
$V^{\check{\omega}_i}(-\check{\omega}_{i-2k})^*=V^{\check{\omega}_i}(\check{\omega}_{i-2k})$
dual to $\{u_d\}$ and write $\sigma=\sum_d\sigma_du^*_d$ where
$\sigma_d\in\C$. Regard
$u_d^*\in\Gamma(G/P_i,\calO(\check{\omega}_i))$ by embedding
$G/P_i$ into $\bbP(V^{\check{\omega}_i})$ through $g\mapsto gv_1$,
then $u^*_d$ does not vanish along the whole
$U_{\check{\beta}_{d,1}}U_{\check{\beta}_{d,2}}\cdots
U_{\check{\beta}_{d,k}}w_d$, while $u^*_{d'}$ does vanish along it
for any $d'\neq d$. Since $\sigma$ vanishes on
$U_{\check{\beta}_{d,1}}U_{\check{\beta}_{d,2}}\cdots
U_{\check{\beta}_{d,k}}w_d$ for all $d$, $\sigma=0$.
\end{proof}

\begin{s}\label{type E,I}
The main Theorem \ref{Main Theorem} now is proved for $G$ of type
$A$ or $D$. Let us also discuss some cases that we can prove now
for simple algebraic groups of type $E$.

For $G$ being of type $E_6$, $\omega_1$ and $\omega_6$ are
minuscule, and $\omega_2$ is the highest coroot; for $G$ being of
type $E_7$, $\omega_1$ is the highest coroot, $\omega_7$ is
minuscule; for $G$ being of type $E_8$, $\omega_8$ is the highest
coroot. We could also prove
\end{s}

\begin{prop}\label{type E,II} Theorem \ref{reduced main theorem } also holds in the following cases:
(i) $G$ is of type $E_6$, and $\lambda=\omega_3$ or $\omega_5$;
(ii) $G$ is of type $E_7$, and $\lambda=\omega_2$ or $\omega_6$;
(iii) $G$ is of type $E_8$, and $\lambda=\omega_1$.
\end{prop}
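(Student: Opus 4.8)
The plan is to run the proof scheme of Theorem \ref{type D} essentially verbatim, after observing that each listed pair $(G,\omega_i)$ is precisely a non-minuscule fundamental coweight satisfying $\langle\omega_i,\check\alpha\rangle\le 2$ for every positive root $\check\alpha$. Indeed $\langle\omega_i,\check\alpha\rangle$ equals the coefficient of $\check\alpha_i$ in $\check\alpha$, so its maximal value over positive roots is the coefficient of $\check\alpha_i$ in the highest root $\check\theta$. Reading off $\check\theta$ in types $E_6,E_7,E_8$, this coefficient is exactly $2$ for $\omega_3,\omega_5$ (type $E_6$), $\omega_2,\omega_6$ (type $E_7$), and $\omega_1$ (type $E_8$), whereas it is $1$ for the minuscule ones and $2$ for the highest coroot (both already covered by \ref{Minuscule} and \ref{higest coroot}), and $\ge 3$ for every remaining coweight, which is exactly why this scheme does not reach them. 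In particular the Proposition following \ref{subschubert} applies to each of our $\omega_i$.

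So I would fix such an $\omega_i$, assume $G$ adjoint, and take a $T$-weight vector $\sigma\in H^0(\overline{\Gr}_G^{\omega_i},\I^{\omega_i}(1))$. First I would show $\sigma$ vanishes on the boundary $Z=\overline{\Gr}_G^{\omega_i}\setminus\Gr_G^{\omega_i}$: a direct inspection of the coroot data shows that the dominant coweights strictly below $\omega_i$ are minuscule coweights, the highest coroot, or $0$, for all of which Theorem \ref{reduced main theorem } is already available, so $\sigma|_Z=0$ and $\sigma\in\Gamma(\overline{\Gr}_G^{\omega_i},\J\otimes\calL_G)$. By the Proposition after \ref{subschubert} this space maps isomorphically onto $\Gamma(G/P_{\omega_i},\calO(\iota\omega_i))=V^{\iota\omega_i}$, so it suffices to kill the resulting weight vector in $V^{\iota\omega_i}$. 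Next, for each $w\in W$ I would produce a Levi subgroup $M$ of type $A$ or $D$, obtained as in \ref{type D} from a suitable choice of simple system, so that $\overline{M_\calO s^{w\omega_i}}\cong\overline{\Gr}_M^{\nu}$ for an appropriate $\nu=p(w\omega_i)$; since $G$ is simply-laced the restriction $\calL_G|_{\Gr_M}\cong\calL_M$ by Lemma \ref{Levi}, and $(\overline{M_\calO s^{w\omega_i}})^T$ is computed by Corollary \ref{fixed-point in Levi}. As Theorem \ref{reduced main theorem } holds for $M$, we get $\sigma|_{\overline{M_\calO s^{w\omega_i}}}=0$, and restricting to $G/P_{\omega_i}$ this yields $\sigma|_{Z_w}=0$ for $Z_w=(\prod_{\langle w\omega_i,\check\alpha\rangle=2}U_{\check\alpha})w$.

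Finally I would invoke the weight-vector argument that concludes the proof of Theorem \ref{type D}: expanding $\sigma$ in an antidominant weight space of $V^{\iota\omega_i}$ against the dual of a basis of vectors $e_{\check\beta_{d,1}}\cdots e_{\check\beta_{d,k}}v_{w_d}$, with the doubled roots $\check\beta_{d,j}$ pairwise orthogonal and satisfying $(\check\beta_{d,j},w_d\check\omega_i)^*=2$, the vanishing of $\sigma$ along each $Z_w$ forces every coefficient to vanish, hence $\sigma=0$. The hard part is precisely the type-dependent combinatorics feeding these steps: one must (i) confirm that the boundary coweights are genuinely the already-handled ones; (ii) exhibit, for every Weyl translate $w\omega_i$, a type $A$ or $D$ Levi capturing all the doubled directions $\{\check\alpha:\langle w\omega_i,\check\alpha\rangle=2\}$ with the correct restricted coweight; and (iii) verify the orthogonal-weight-basis lemma inside the genuinely $E$-type modules $V^{\iota\omega_i}$. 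Unlike type $D$, there is no uniform chain to induct along, so each of the five cases must be checked separately against the explicit root system.
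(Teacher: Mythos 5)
Your proposal is correct and takes essentially the same route as the paper: there, too, each of the five cases is handled by constructing a Levi subgroup of type $A$ or $D$ capturing the doubled directions (the paper writes out the simple-root data only for $(E_6,\omega_3)$, an $A_5$-Levi $M$ with $\overline{M_\calO s^{\omega_3}}\cong\overline{\Gr}_M^{2\omega_1^M}$) and then ``proceeds as in'' the type $D$ argument --- vanishing of $\sigma$ along the translates $Z_{w\omega_i}$, reduction to $\Gamma(G/P_{\omega_i},\calO(\iota\omega_i))$ via the proposition of \ref{subschubert}, and the orthogonal-weight-basis lemma. Your explicit identification of these cases as exactly the non-minuscule, non-highest-coroot fundamental coweights with $\langle\omega_i,\check{\alpha}\rangle\le 2$, and your replacement of the type-$D$ induction on $i$ by the observation that the boundary strata are already-handled coweights (minuscule, highest coroot, or $0$), are faithful and, if anything, more explicit than the paper's terse treatment.
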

\begin{proof} The proofs are similar to the previous ones. Take
$G=E_6$ and
 $\lambda=\omega_3$ for example. Let
\[\check{\beta}_1=\check{\alpha}_1+2\check{\alpha}_2+2\check{\alpha}_3+\check{\alpha}_4+\check{\alpha}_6,\check{\beta}_2=\check{\alpha}_5,\check{\beta}_3=\check{\alpha}_4,\check{\beta}_4=\check{\alpha}_3,\check{\beta}_5=\check{\alpha}_6\]
Then $\{\check{\beta}_1,\ldots,\check{\beta}_5\}$ determines a
subgroup $M$ of $G$, of type $A_5$, which is the derived group of
a Levi subgroup of $G$. (To see this, observe that
$\{\check{\beta}_1,\check{\beta}_2,\check{\beta}_3,\check{\beta}_4,\check{\beta}_5,-\check{\alpha}_2-2\check{\alpha}_3-2\check{\alpha}_4-\check{\alpha}_5-\check{\alpha}_6\}$
can be chosen as the set of simple roots for $G$.) Denote
$\Delta_3=\{\check{\alpha},\langle\omega_3,\check{\alpha}\rangle=2\}$.
Then the closure of
$\prod_{\check{\alpha}\in\Delta_3}U_{\check{\alpha}}U_{\check{\delta}+\check{\alpha}}s^{\omega_3}$
in $\overline{\Gr}_G^{\omega_2}$, is isomorphic to
$\overline{\Gr}_M^{2\omega^M_1}$, where $\omega^M_1$ is the
fundamental coweight of $M$ corresponding to $\check{\beta}_1$.
Now proceed as in \ref{type D}.
\end{proof}

\subsection{Applications: the smooth locus of
$\overline{\Gr}_G^\lambda$}\label{simple app} We prove Corollary
\ref{Smooth locus} in this section. It is clear that to prove the
corollary, it is enough to prove that $\overline{\Gr}_G^\lambda$
is not smooth at $s^\mu$ for $\mu\in\Lambda_G^+,\mu<\lambda$.
Since $(\overline{\Gr}_G^\lambda)^T$ is a finite scheme, a direct
consequence of Theorem \ref{Main Theorem} is the following

\begin{cor}\label{length}
In the notations of Theorem \ref{Main Theorem}, let
$V_\lambda=H^0(\overline{\Gr}_G^\lambda,\calL_G)^*$ be the affine
Demazure module. Then
\[\dim V_\lambda(-\iota\mu)=\mathrm{length}_\C\calO_{(\overline{\Gr}_G^\lambda)^T,s^\mu}\]
where $V_\lambda(-\iota\mu)$ denotes the $(-\iota\mu)$-weight
subspace of $V_\lambda$.
\end{cor}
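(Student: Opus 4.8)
The plan is to read off both sides as $T$-weight multiplicities of a single object, namely $H^0(\overline{\Gr}_G^\lambda,\calL_G)$, and to use Theorem \ref{Main Theorem} to transfer the computation to the finite fixed-point scheme. First I would note that the isomorphism of Theorem \ref{Main Theorem} is induced by the natural restriction morphism (\ref{further restriction}), which is $T$-equivariant; it is therefore an isomorphism of $T$-modules
\[H^0(\overline{\Gr}_G^\lambda,\calL_G)\cong\Gamma(\overline{\Gr}_G^\lambda,\calO_{(\overline{\Gr}_G^\lambda)^T}\otimes\calL_G).\]
Since $V_\lambda=H^0(\overline{\Gr}_G^\lambda,\calL_G)^*$ and $(W^*)(-\chi)=(W(\chi))^*$ for any $T$-module $W$, it suffices to compute $\dim H^0(\overline{\Gr}_G^\lambda,\calL_G)(\iota\mu)$, i.e. the $\iota\mu$-weight space on the right-hand side.

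Next I would decompose the right-hand side over the support of $(\overline{\Gr}_G^\lambda)^T$. By \ref{first reduction} this support is $\{s^{w\nu}\}$, and because the scheme is finite its structure sheaf is a product of the local Artinian rings at these points, so
\[\Gamma(\overline{\Gr}_G^\lambda,\calO_{(\overline{\Gr}_G^\lambda)^T}\otimes\calL_G)=\bigoplus_{\nu}\calO_{(\overline{\Gr}_G^\lambda)^T,s^\nu}\otimes\calL_G|_{s^\nu},\]
the sum running over the $T$-fixed points $s^\nu$ of $\overline{\Gr}_G^\lambda$ (with the convention that a point absent from the support contributes the zero ring).

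The key computation is the $T$-weight of each summand. Since $(\overline{\Gr}_G^\lambda)^T$ is by definition the largest subscheme on which $T$ acts trivially (see \ref{fp} and \cite{DG}), the torus $T$ acts trivially on $\calO_{(\overline{\Gr}_G^\lambda)^T}$, so every weight occurring in $\calO_{(\overline{\Gr}_G^\lambda)^T,s^\nu}$ is zero; hence the entire summand at $s^\nu$ sits in the $T$-weight of the line $\calL_G|_{s^\nu}$. Using the embedding $\Gr_G\hookrightarrow\bbP(\bigoplus_\gamma L(\Lambda+\iota\omega_{i_\gamma}))$ from \ref{proof}, the point $s^\nu$ is the line $\C\,t^\nu v_\Lambda$ of weight $\Lambda-\iota\nu-\tfrac{(\nu,\nu)}{2}\check{\delta}$; restricting this character to the finite torus $T\subset G$ kills the $\Lambda$ and $\check{\delta}$ contributions, and comparing with $\sigma_\nu$ (which, as in Proposition \ref{surjectivity}, restricts $T$-equivariantly to a nonzero vector of $\calL_G|_{s^\nu}$ and so carries weight $\iota\nu$ there) pins down the sign, giving that $\calL_G|_{s^\nu}$ has $T$-weight $\iota\nu$.

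Finally I would assemble the pieces. As $\iota\colon\frakt\to\frakt^*$ is injective, $\iota\nu=\iota\mu$ forces $\nu=\mu$, so exactly one summand contributes to the $\iota\mu$-weight space, whence
\[\dim H^0(\overline{\Gr}_G^\lambda,\calL_G)(\iota\mu)=\dim_\C\calO_{(\overline{\Gr}_G^\lambda)^T,s^\mu}.\]
The left-hand side equals $\dim V_\lambda(-\iota\mu)$ by the duality noted above, and for a finite local $\C$-algebra with residue field $\C$ the $\C$-dimension coincides with the length, which yields the stated identity. I expect the only genuinely delicate step to be the weight bookkeeping of this third paragraph — verifying that $T$ acts trivially on the fixed-point structure sheaf and fixing the sign of the weight of $\calL_G|_{s^\nu}$; once these are settled, the result is a formal consequence of Theorem \ref{Main Theorem} and the finiteness of $(\overline{\Gr}_G^\lambda)^T$.
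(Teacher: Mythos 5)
Your proposal is correct and follows essentially the same route as the paper, whose entire proof is the one-line observation that under the isomorphism of Theorem \ref{Main Theorem} the local ring $\calO_{(\overline{\Gr}_G^\lambda)^T,s^\mu}$ corresponds to the $(\iota\mu)$-weight subspace of $\Gamma(\overline{\Gr}_G^\lambda,\calL_G)$. Your paragraphs simply make that observation explicit: $T$-equivariance of the restriction map, decomposition of the finite fixed-point scheme into its local Artinian pieces, and the weight computation $\calL_G|_{s^\nu}$ of weight $\iota\nu$ (consistent with the identification $(\calO_{\Gr_T,s^\lambda}\otimes\calL_G|_{s^\lambda})^*\cong\pi_{-\lambda}$ used in Proposition \ref{surjectivity}).
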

\begin{proof} Observe that under the isomorphism in Theorem \ref{Main Theorem},
$\calO_{(\overline{\Gr}_G^\lambda)^T,s^\mu}$ corresponds to the
$(\iota\mu)$-weight subspace in
$\Gamma(\overline{\Gr}_G^\lambda,\calL_G)$.
\end{proof}

To prove that $\overline{\Gr}_G^\lambda$ is not smooth at $s^\mu$,
we will need the following simple and well-known lemmas.
\begin{lem} Let $X=\spec A$ be an affine scheme over $k$ with an action
of a torus $T$. Then $A$ is an algebraic representation of $T$.
Decompose $A=A_0\oplus A_+$, where $A_0$ is the zero weight space
and $A_+$ is the direct sum of nonzero weight spaces. Let $X^T$ be
the $T$-fixed subcheme of $X$ defined as above and $I$ the ideal
defining $X^T$. Then $I=A_+A$.
\end{lem}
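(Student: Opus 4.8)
The plan is to dualize the $T$-action into a comodule structure and then read off the fixed-point ideal directly. First I would observe that the action morphism $T\times X\to X$ dualizes to a $k$-algebra homomorphism (coaction) $\mu\colon A\to A\otimes k[T]$. Since $T$ is a torus, $k[T]$ is the group algebra of the character lattice $X^*(T)$, so comodules over it are precisely $X^*(T)$-graded vector spaces; this produces the weight decomposition $A=\bigoplus_\chi A_\chi$ with $\mu(a)=\sum_\chi a_\chi\otimes\chi$, which is exactly the assertion that $A$ is an algebraic representation. Because $\mu$ is an algebra map one has $A_\chi A_{\chi'}\subseteq A_{\chi+\chi'}$, so $A_+$ is a $T$-stable subspace and $A_+A$ is a $T$-stable ideal; set $\bar A=A/A_+A$.

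Next I would identify the fixed-point condition with a single universal test. A point $x\in X(R)$ is a homomorphism $\phi\colon A\to R$. Among all the pairs $(R\to R',\,g\in T(R'))$ appearing in the definition of $X^T$, the universal one --- take $R'=R\otimes k[T]$ and let $g$ be the tautological point of $T(R\otimes k[T])$ --- has the property that every other instance is obtained from it by base change along the map $R\otimes k[T]\to R'$ determined by $g$. Hence $x\in X^T(R)$ if and only if $g\cdot x_{R'}=x_{R'}$ for this one $g$. Unwinding the action through $\mu$, the left-hand side becomes the map $a\mapsto\sum_\chi\phi(a_\chi)\otimes\chi$ and the right-hand side is $a\mapsto\phi(a)\otimes 1=\sum_\chi\phi(a_\chi)\otimes 1$, both regarded as $k$-algebra maps $A\to R\otimes k[T]$.

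Finally I would compare coefficients. The characters $\{\chi\}$ are linearly independent in $k[T]$, so equality of the two maps forces $\phi(a_\chi)=0$ for every nontrivial $\chi$ and every $a$, that is $\phi(A_+)=0$; conversely this plainly suffices. Since $\ker\phi$ is an ideal, $\phi(A_+)=0$ is equivalent to $A_+A\subseteq\ker\phi$, i.e. to $\phi$ factoring through $\bar A$. Thus $X^T(R)=\Hom_{k\text{-alg}}(\bar A,R)$ functorially in $R$, so $X^T=\spec\bar A$ and the defining ideal is $I=A_+A$, as claimed.

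I expect the only genuine subtlety to be the reduction in the second paragraph: justifying that testing against the single tautological element $g\in T(R\otimes k[T])$ captures the condition quantified over all $R'$ and all $g\in T(R')$. This is the standard generic-element argument for fixed-point functors, and the representability of $X^T$ as a closed subscheme (already recorded in the preceding lemma) guarantees that the ideal extracted this way is indeed the one defining $X^T$. The remaining steps are formal manipulations with the grading and present no difficulty.
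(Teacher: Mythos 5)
Your proof is correct. Note that the paper itself states this lemma without proof (it is introduced as ``simple and well-known''), so there is no argument of the paper to compare against; your write-up supplies the missing details, and it does so soundly. The two pivotal points are both handled properly: first, the reduction of the fixed-point condition, which is quantified over all $R\to R'$ and all $g\in T(R')$, to the single tautological element $g\in T(R\otimes_k k[T])$ --- justified because any other pair is obtained by pushing forward along the $R$-algebra map $R\otimes_k k[T]\to R'$ classifying $g$, combined with naturality of the action; second, the fact that the characters form a $k$-basis of $k[T]$ (the group algebra of the character lattice), so that comparing coefficients in $\sum_\chi\phi(a_\chi)\otimes\chi=\sum_\chi\phi(a_\chi)\otimes 1$ forces exactly $\phi(A_+)=0$, which, since $\ker\phi$ is an ideal and $A_+\subseteq A_+A$ (as $1\in A$), is equivalent to $A_+A\subseteq\ker\phi$. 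One small remark: your closing appeal to the preceding representability lemma is unnecessary --- the functorial identification $X^T(R)=\Hom_{k\text{-alg}}(A/A_+A,R)$ that you establish is by itself a proof that $X^T$ is the closed subscheme cut out by $A_+A$, with no prior knowledge of representability required.
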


\begin{lem}\label{smoothness} Let $X$ be a smooth quasi-projective variety with an action
of a torus $T$. Then $X^T$ is also smooth, and in particular, is
reduced.
\end{lem}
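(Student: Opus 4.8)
The plan is to reduce to the affine case and there to use a $T$-equivariant choice of local coordinates at a fixed point. Since smoothness and reducedness of $X^T$ are local on $X$, and since the formation of the fixed-point subscheme commutes with open immersions, I would first invoke Sumihiro's theorem: a smooth (hence normal) quasi-projective variety with a torus action is covered by $T$-invariant affine open subsets. On each such patch this reduces us to the situation $X=\spec A$ of the preceding lemma, where $X^T$ is the closed subscheme cut out by the ideal $A_+A$, with $A=A_0\oplus A_+$ the decomposition into the zero-weight part and the sum of the nonzero-weight parts.

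Next I would work at a closed fixed point $x\in X^T$, with maximal ideal $\frakm\subset A$. Because $x$ is fixed, every nonzero-weight function vanishes at $x$, so $A_+\subset\frakm$ and $X^T$ passes through $x$. The cotangent space $\frakm/\frakm^2$ is a finite-dimensional rational $T$-representation of dimension $d=\dim X$ (using smoothness of $X$), and since $T$ is linearly reductive in characteristic $0$, the equivariant surjection $\frakm\to\frakm/\frakm^2$ splits. Hence I can lift a weight basis to regular parameters $f_1,\dots,f_d\in\frakm$ that are $T$-weight vectors, reordered so that $f_1,\dots,f_r$ have weight $0$ and $f_{r+1},\dots,f_d$ have nonzero weight.

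The key computation is then to identify the ideal of $X^T$ locally at $x$. Passing to the completion $\widehat{\calO}_{X,x}\cong\C[[f_1,\dots,f_d]]$, on which $T$ still acts through the weights of the $f_i$, I would argue that the completed ideal $(A_+A)^{\wedge}$ equals $(f_{r+1},\dots,f_d)$. The inclusion $\supseteq$ is clear, since each $f_j$ with $j>r$ lies in $A_+$. For $\subseteq$, any element of $A_+$ has nonzero weight, so its image is a power series all of whose monomials have nonzero weight; as every monomial in $f_1,\dots,f_r$ alone has weight $0$, each such monomial must involve some $f_j$ with $j>r$, giving membership in $(f_{r+1},\dots,f_d)$. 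Therefore $\widehat{\calO}_{X^T,x}\cong\C[[f_1,\dots,f_r]]$ is regular, so $\calO_{X^T,x}$ is regular and $X^T$ is smooth at $x$; over $\C$ this gives reducedness as an immediate consequence.

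I expect the main obstacle to be the reduction to the affine, $T$-invariant setting: one must check that the fixed-point subfunctor is local in the appropriate sense, so that Sumihiro's invariant affine cover lets us apply the preceding lemma patch by patch and glue the conclusions. Once that is in place, the equivariant splitting of $\frakm\to\frakm/\frakm^2$ furnished by linear reductivity, together with the weight bookkeeping for monomials in the completion, is routine.
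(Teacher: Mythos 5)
Your proof is correct and follows essentially the same route as the paper's: reduce to a $T$-invariant affine patch (Sumihiro), localize at a fixed point, use linear reductivity to lift a weight basis of $\frakm/\frakm^2$ to regular parameters that are $T$-weight vectors, and then identify the fixed-point ideal $A_+A$ from the preceding lemma with the ideal generated by the nonzero-weight parameters. The only cosmetic difference is that you carry out this last identification in the completion via a monomial-weight count, whereas the paper deduces it directly from the preceding lemma applied to the local ring.
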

\begin{proof} Since $X$ is covered by $T$-invariant open affine subschemes, we could
assume that $X$ is affine. Let $x\in X^T$, then $\spec\calO_{X,x}$
is a $T$-invariant subscheme of $X$. Therefore, we could replace
our $X$ by $\spec A$, where $A=\calO_{X,x}$. Let $\frakm$ be the
maximal ideal of $A$. Then one can find a $T$-invariant subspace
$N$ in $\frakm$, such that the natural projection
$N\to\frakm/\frakm^2$ is a $T$-module isomorphism. Decompose
$N=N_0\oplus N_+$, where $N_0$ is the zero weight space and $N_+$
is the direct sume of some nonzero weight spaces. Since $A$ is
regular, $N$ generate $\frakm$. By the lemma above, it is easy to
see that $X^T=\spec (A/N_+)$, which is obviously regular.
\end{proof}

Now, according to Corollary \ref{length} and Lemma
\ref{smoothness}, to prove Corollary \ref{Smooth locus}, it is
enough to show that

\begin{lem} For a simple algebraic group $G$ of type $A$ or $D$, let $\lambda,\mu\in\Lambda_G^+,\mu\leq\lambda$, then $\dim
V_\lambda(-\iota\mu)>1$.
\end{lem}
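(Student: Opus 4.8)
The lemma is a purely representation-theoretic statement about the affine Demazure module $V_\lambda=H^0(\overline{\Gr}_G^\lambda,\calL_G)^*$, so I would prove it directly rather than through Corollary \ref{length}. I read the hypothesis as $\mu<\lambda$: when $\mu=\lambda$ the weight $-\iota\lambda$ is the lowest weight of a single irreducible $G$-constituent (and, via Corollary \ref{length}, corresponds to the reduced point $s^\lambda\in\Gr_G^\lambda$), so $\dim V_\lambda(-\iota\lambda)=1$ and strictness is genuinely needed. The plan is to exhibit two \emph{distinct} irreducible $G$-submodules of $V_\lambda$, each of which meets the $(-\iota\mu)$-weight space; since non-isomorphic irreducibles intersect trivially, their $(-\iota\mu)$-weight spaces are linearly independent in $V_\lambda$, giving $\dim V_\lambda(-\iota\mu)\geq 2$.

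The two submodules come from closed $G$-orbits. For any dominant $\nu\leq\lambda$ the point $s^\nu$ lies in $\overline{\Gr}_G^\lambda$, and $G\cdot s^\nu\cong G/P_\nu$ is a closed $G$-orbit on which $\calL_G$ restricts to $\calO(\iota\nu)$; this is the same weight computation already recorded in \ref{subschubert} for the top orbit, and is controlled by Lemma \ref{ration curves}. By the Borel--Weil theorem $\Gamma(G/P_\nu,\calO(\iota\nu))\cong V^{\iota\nu}$, and the restriction $\Gamma(\overline{\Gr}_G^\lambda,\calL_G)\to V^{\iota\nu}$ is nonzero (the section $\sigma_\nu$ of Proposition \ref{surjectivity} does not vanish at $s^\nu$, or simply use that $\calL_G$ is globally generated). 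Since the target is irreducible, the restriction is surjective, and dualizing gives a $G$-embedding $V^{-w_0\iota\nu}\cong(V^{\iota\nu})^*\hookrightarrow V_\lambda$. I would apply this twice, with $\nu=\lambda$ and $\nu=\mu$.

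It then remains to check that both $V^{-w_0\iota\lambda}$ and $V^{-w_0\iota\mu}$ contribute to the $(-\iota\mu)$-weight space. The weight $-\iota\mu$ is the lowest weight of $V^{-w_0\iota\mu}$, so it occurs there with multiplicity one. For $V^{-w_0\iota\lambda}$ I use that $G$ is simply-laced: then $\iota$ carries simple coroots to simple roots, so from $\mu\leq\lambda$ we obtain $\iota(\lambda-\mu)\in\sum_i\Z_{\geq 0}\check\alpha_i$, i.e. $\iota\mu$ is a dominant weight with $\iota\mu\preceq\iota\lambda$. A dominant weight below the highest weight always occurs in the irreducible module, so $\iota\mu$ is a weight of $V^{\iota\lambda}$ and hence $-\iota\mu$ is a weight of $V^{-w_0\iota\lambda}=(V^{\iota\lambda})^*$. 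Because $\mu<\lambda$ the highest weights $-w_0\iota\lambda$ and $-w_0\iota\mu$ differ, the two submodules are non-isomorphic, and combining the two contributions yields $\dim V_\lambda(-\iota\mu)\geq 1+1>1$.

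The step I expect to be the real content is the geometric input of the second paragraph: that $G\cdot s^\nu$ is genuinely a \emph{closed} $G$-orbit in $\overline{\Gr}_G^\lambda$ and that $\calL_G$ restricts there to $\calO(\iota\nu)$ for every dominant $\nu\leq\lambda$, not only for $\nu=\lambda$. Both are routine extensions of \ref{subschubert}, but they are where the argument actually lives. I note that the simply-laced hypothesis enters exactly once, in passing from $\mu\leq\lambda$ to $\iota\mu\preceq\iota\lambda$ with $\iota\mu$ an \emph{integral} dominant weight; for non-simply-laced $G$ the map $\iota$ need not send coweights to integral weights, consistent with the failure of the main theorem outside the simply-laced case. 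An alternative, closer in spirit to the rest of \S\ref{proof}, would instead use the tensor factorization $V_\lambda\cong\bigotimes_i V_{\omega_i}^{\otimes m_i}$ of Theorem \ref{tensor structure} to reduce to weight multiplicities in tensor products of the fundamental modules and verify types $A$ and $D$ combinatorially, but the constituent argument above is shorter and uniform.
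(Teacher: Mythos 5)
Your proof is correct, and it takes a genuinely different route from the paper at the one step that matters. Both arguments share the same weight-theoretic skeleton: produce two independent contributions to the $(-\iota\mu)$-weight space of $V_\lambda$, one from a constituent with lowest weight $-\iota\mu$, the other from $V^{-w_0\iota\lambda}$ via the observation (identical in both proofs, and the only place simply-lacedness enters) that $\iota\mu$ is a dominant integral weight with $\iota\mu\preceq\iota\lambda$, hence a weight of $V^{\iota\lambda}$. The difference is how the lowest-weight-$(-\iota\mu)$ constituent is produced. The paper embeds the \emph{entire} Demazure module $V_\mu$ into $V_\lambda$ as a direct summand, dual to a Sublemma asserting surjectivity of the restriction $H^0(\overline{\Gr}_G^\lambda,\calL_G^{\otimes k})\to H^0(\overline{\Gr}_G^\mu,\calL_G^{\otimes k})$; proving that Sublemma is the bulk of the paper's argument (flat models over $\Z$, compatible Frobenius splittings in characteristic $p$ following \cite{F}, \cite{Ma}, \cite{MR}, and semicontinuity), together with an explanation of why the ``easy'' proof via Theorem \ref{Main Theorem} would be circular. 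You instead restrict to the finite-dimensional closed orbit $G\cdot s^\mu\cong G/P_\mu\subset\overline{\Gr}_G^\lambda$, where classical Borel--Weil plus irreducibility of $V^{\iota\mu}$ makes surjectivity automatic once the restriction map is nonzero (global generation of $\calL_G$, or $\sigma_\mu(s^\mu)\neq 0$ as in Proposition \ref{surjectivity}); dualizing embeds only the irreducible $V^{-w_0\iota\mu}$, which is all the weight count needs. The geometric inputs you flag are indeed routine: the orbit is closed because $G/P_\mu$ is proper, and $\calL_G|_{G\cdot s^\mu}\cong\calO(\iota\mu)$ follows from the weight of the line at $s^\mu$ exactly as in \ref{subschubert}. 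What your route buys is a shorter, purely characteristic-zero argument free of the circularity caveat; what it gives up is the Sublemma itself, which is a stronger statement of independent interest (valid for all $k$ and all simple $G$, and exhibiting all of $V_\mu$, not just its bottom constituent, inside $V_\lambda$). You are also right that the statement should read $\mu<\lambda$: for $\mu=\lambda$ the dimension is $1$, and the paper's own proof tacitly uses strictness at the step where it claims $V_\mu\cap V^{-w_0\iota\lambda}=0$ in $V_\lambda$.
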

\begin{proof} We need the following

\begin{sublem}For any simple, not necessarily simply-laced algebraic group $G$, the natural morphism
$H^0(\overline{\Gr}_G^\lambda,\calL_G^{\otimes k})\to
H^0(\overline{\Gr}_G^\mu,\calL_G^{\otimes k})$ is surjective.
\end{sublem}

Assuming the above sublemma, we first conclude the proof of the
proposition, and in consequence, Corollary \ref{Smooth locus}.
Recall that $\tilde{G}$ is denoted the simply-connected cover of
$G$. The sublemma implies that as $\tilde{G}$-modules, $V_\mu$ is
a direct summand of $V_\lambda$. However, $V_\lambda$ is a
$\tilde{G}$-module of lowest weight $-\iota\lambda$. Therefore,
$V_\lambda$ contains the simple $\tilde{G}$-module
$V^{-w_0\iota\lambda}$ of highest weight $-w_0\iota\lambda$ as a
direct summand., where $w_0$ is the element in the Weyl group of
maximal length. Since $V_\mu$ is of lowest weight $-\iota\mu$,
$V_\mu\cap V^{-w_0\iota\lambda}=\emptyset$ in $V_\lambda$. Observe
that $\iota\mu\in\check{\Lambda}_+$ and
$\iota\mu\leq\iota\lambda$. Whence, $\dim
V^{-w_0\iota\lambda}(-\iota\mu)\geq 1$. This together with $\dim
V_\mu(-\iota\mu)=1$ implies the lemma.

It remains to prove the sublemma. When $G$ is of type $A$ or $D$
and $k=1$, which is the case we need, the sublemma is easily
proved as follows. Apply the main theorem. Then the surjectivity
of $H^0(\overline{\Gr}_G^\lambda,\calL_G)\to
H^0(\overline{\Gr}_G^\mu,\calL_G)$ is equivalent to the
surjectivity of
$\calO_{(\Gr_G^\lambda)^T}\to\calO_{(\Gr_G^\mu)^T}$, which is
obvious.

However, let us include a proof of the full statement of the
sublemma for completeness. This is well-known and in fact is the
one of the ingredients for the proof of the Demazure character
formula in \cite{Ku} and \cite{Ma}. (Therefore, the previous easy
proof for $G$ simply-laced and $k=1$ does not quite apply, since
we used the main theorem, whose proof relies on \cite{Ku} and
\cite{Ma}.) The statement of the sublemma is the direct
consequence of the following two facts. First, there exists a flat
model $\overline{\Gr}_G^\lambda$ over $\Z$ such that: (i) if
$\mu\leq\lambda$, $\overline{\Gr}_G^\mu$ is a closed subscheme of
$\overline{\Gr}_G^\lambda$; (ii) over an open subset of $\spec\Z$,
the geometrical fibers are reduced and therefore are the Schubert
varieties over that base field (e.g. see \cite{MR} \S 3, Lemma 3).
Second, over an algebraically closed field of characteristic $p$,
$\overline{\Gr}_G^\mu\subset\overline{\Gr}_G^\lambda$ are
compatibly Frobenius splitting. This is in fact proved in \cite{F}
and \cite{Ma} for Schubert varieties in the affine flag variety
$\F\ell_G:=G_\K/I$, where $I$ is the Iwahori subgroup. However, it
is known that for any $\overline{\Gr}_G^\lambda$, there exist
Schubert varieties $X_{w_\mu}\subset X_{w_\lambda}$ in $\F\ell_G$
such that the projection
$X_{w_\lambda}\to\overline{\Gr}_G^\lambda$ is proper birational,
and under the projection, the scheme theoretical image of
$X_{w_\mu}$ is $\overline{\Gr}_G^\mu$. The normality of
$\overline{\Gr}_G^\lambda$ (cf. \cite{F} Theorem 8) together with
\cite{MR} \S 1, Proposition 4 implies that
$\overline{\Gr}_G^\mu\subset\overline{\Gr}_G^\lambda$ are
compatibly Frobenius splitting. Then by \cite{MR} \S 1,
Proposition 3, $H^0(\overline{\Gr}_G^\lambda,\calL_G^{\otimes
k})\to H^0(\overline{\Gr}_G^\mu,\calL_G^{\otimes k})$ is
surjective over $\bar{\mathbb F}_p$. Now the first statement and
the semi-continuity theorem (\cite{EGAIII} Theorem 7.7.5) imply
that the sublemma also holds in characteristic 0.
\end{proof}
\section{The bosonic realization}\label{Boson}
We will discuss the geometrical form of the FKS in this section.
The language we will be using is the factorization algebras
developed by Beilinson and Drinfeld in \cite{BD1} \S 3.4 (see also
\cite{FB} Chapter 20). We refer the readers to \cite{FB} Chapter
19 and 20 for the dictionaries between vertex algebras and
factorization algebras. We will fix a smooth curve $X$ throughout
this section. The canonical sheaf $\omega_{X^n}$ on $X^n$ is
regarded as a right $D$-module. For any morphism $f:M\to N$, $f_!$
is denoted the push-forward of right $D$-modules.

\subsection{The Heisenberg algebras}\label{Hei}
\begin{s}\label{Heisenberg} Let $\frakt$ be an abelian Lie algebra, and $(\cdot,\cdot)$ be a symmetric bilinear form on $\frakt$.
Given such data, we define the Heisenberg Lie algebra to be the
central extension
\[0\to\C K\to\hat{\frakt}\to\frakt\hat{\otimes}\K\to 0\]
with the Lie bracket given by
\[[A\otimes f,B\otimes g]=(A,B)\res (gdf)K \ \ \ \mbox{ for } A,B\in\frakt, f,g\in\K\]

For any $\lambda\in\frakt$, the level $k$ Fock module
$\pi^k_\lambda$ of $\hat{\frakt}$ is defined as
$\pi_\lambda=\ind_{\frakt\hat{\otimes}\calO\oplus\C
K}^{\hat{\frakt}}\C$, where $K$ acts on $\C$ by multiplication by
$k$, $\frakt\hat{\otimes}\frakm$ acts on $\C$ by zero ($\frakm$
the maximal ideal of $\calO$), and
$\frakt=\frakt\hat{\otimes}\calO/\frakt\hat{\otimes}\frakm$ acts
on $\C$ by $\iota\lambda$, $\iota:\frakt\to\frakt^*$ being the
isomorphism induced from the non-degenerate bilinear form on
$\frakt$. Level one Fock modules $\pi^1_\lambda$ are usually
simply denoted by $\pi_\lambda$. It is clear that the Fock modules
are irreducible $\hat{\frakt}$-modules if the bilinear form
$(\cdot,\cdot)$ is non-degenerate.

Given a simple algebraic group $G$, we obtain the data
$(\frakt,(\cdot,\cdot))$, where $\frakt$ is a Cartan subalgebra of
$\frakg$, and $(\cdot,\cdot)$ is the restriction to $\frakt$ the
normalized invariant form on $\frakg$. Then the Heisenberg algebra
$\hat{\frakt}$ associated to this data is just restriction of the
central extension of $\frakg\hat{\otimes}\K$ to
$\frakt\hat{\otimes}\K$. Therefore, any module over $\hat{\frakg}$
is a module over $\hat{\frakt}$.

All the discussions in this section are based on the following
theorem.
\end{s}

\begin{thm} Let $G$ be a simple (not necessarily simply-connected) algebraic group of type $A$, $D$,
or $E$. For any $\gamma\in\pi_1(G)$, recall $\omega_{i_\gamma}$
from \ref{minuscule coweight}. Then one has the isomorphism
\[L(\Lambda+\iota\omega_{i_\gamma})\cong\bigoplus_{\lambda\in R_G}\pi_{\omega_{i_\gamma}+\lambda}\]
as $\hat{\frakt}$-modules.
\end{thm}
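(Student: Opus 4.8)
The plan is to extract the decomposition from the level-one Borel--Weil identification together with the main theorem and the explicit Heisenberg action on the sections supported at the torus-fixed points. First I would specialize Proposition \ref{Borel-Weil} to $k=1$, so that $\Gamma(\Gr_G,\calL_G)^*\cong\bigoplus_{\gamma\in\pi_1(G)}L(\Lambda+\iota\omega_{i_\gamma})$, and note that this is precisely the decomposition of $\Gamma(\Gr_G,\calL_G)^*$ over the connected components, the summand $L(\Lambda+\iota\omega_{i_\gamma})$ being the dual of the global sections on the component $(\Gr_G)^\gamma$, whose $T$-fixed locus is the discrete scheme $\{s^\lambda:\lambda\in\omega_{i_\gamma}+R_G\}$. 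On the torus side, Corollary \ref{T-fix point} identifies $\Gr_T$ with $(\Gr_G)^T$. For $G$ of type $A$ or $D$ I would then pass to the inverse limit over $\lambda\in\Lambda_G^+$ in Theorem \ref{Main Theorem}: the isomorphisms $\Gamma(\overline{\Gr}_G^\lambda,\calL_G)\cong\Gamma((\overline{\Gr}_G^\lambda)^T,\calL_G)$ are compatible with the (surjective, $\hat{\frakt}$-equivariant) transition maps used in the proof of Proposition \ref{surjectivity}, so in the limit they yield a $\hat{\frakt}$-equivariant isomorphism $\Gamma(\Gr_G,\calL_G)\cong\Gamma(\Gr_T,\calL_G|_{\Gr_T})$ respecting the component grading.

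Second, I would identify each summand on the torus side with a Fock module. By Remark \ref{formal part} the formal neighbourhood of $s^\lambda$ in $\Gr_T$ is the formal group with Lie algebra $\frakt(\K)/\frakt(\calO)$, so dualizing its ring of functions twisted by the fibre $\calL_G|_{s^\lambda}$ reproduces a level-one Fock space of $\hat{\frakt}$; its Heisenberg highest weight is read off from the $T$-weight $\Lambda-\iota\lambda-\tfrac{(\lambda,\lambda)}{2}\check{\delta}$ of $s^\lambda$ computed in the proof of Lemma \ref{ration curves}, exactly as in the proof of Proposition \ref{surjectivity}. Assembling these as $\lambda$ runs over the fixed points $\lambda\in\omega_{i_\gamma}+R_G$ of the $\gamma$-component, and using $R_G=-R_G$ to reindex into the form prescribed by the conventions of \ref{Heisenberg}, gives $L(\Lambda+\iota\omega_{i_\gamma})\cong\bigoplus_{\lambda\in R_G}\pi_{\omega_{i_\gamma}+\lambda}$ as $\hat{\frakt}$-modules, which is the assertion for types $A$ and $D$.

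For type $E$ the geometric input of Theorem \ref{Main Theorem} is available only for some $\lambda$, so I would instead conclude by comparing graded characters. Proposition \ref{surjectivity} holds for every simple $G$, and component by component it already produces an \emph{injective} $\hat{\frakt}$-module map $\bigoplus_{\lambda\in R_G}\pi_{\omega_{i_\gamma}+\lambda}\hookrightarrow L(\Lambda+\iota\omega_{i_\gamma})$, since each simple Fock summand maps in nontrivially and hence injectively. To see this map is onto I would invoke the classical character formula (Corollary \ref{character formula} for $\gamma=0$ and its minuscule analogue, cf.\ \cite{Kac}), which asserts that $L(\Lambda+\iota\omega_{i_\gamma})$ and $\bigoplus_{\lambda\in R_G}\pi_{\omega_{i_\gamma}+\lambda}$ have equal characters; an injection of graded $\hat{\frakt}$-modules with finite-dimensional graded pieces of equal dimension is necessarily an isomorphism.

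The main obstacle is the bookkeeping of the $\hat{\frakt}$-module structure under restriction to the fixed locus: one must verify that the restriction $\Gamma(\Gr_G,\calL_G)\to\Gamma(\Gr_T,\calL_G|_{\Gr_T})$ is $\hat{\frakt}$-equivariant and match the Heisenberg highest weights with the correct signs, so that the index set comes out as the coset $\omega_{i_\gamma}+R_G$ rather than its negative; here the symmetry $R_G=-R_G$ together with the weight normalization fixed in the proof of Lemma \ref{ration curves} is what reconciles the geometric sign with the statement. For type $E$ the substantive content is pushed entirely onto the classical character identity, since the purely geometric surjectivity from Theorem \ref{Main Theorem} is not yet available in that case.
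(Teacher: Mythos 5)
For types $A$ and $D$ your argument is essentially the paper's: Proposition \ref{Borel-Weil} together with Theorem \ref{Fixed-point} (via Corollary \ref{T-fix point}) reduces the statement to the assertion that restriction $\Gamma((\Gr_G)^\gamma,\calL_G)\to\Gamma((\Gr_G)^\gamma,\calO_{((\Gr_G)^\gamma)^T}\otimes\calL_G)$ is an isomorphism on each connected component, and this follows from Theorem \ref{Main Theorem} by passing to the limit over the Schubert varieties exhausting $(\Gr_G)^\gamma$; the identification of the dual of the fixed-point side with a sum of level-one Fock modules is exactly the one made in the proof of Proposition \ref{surjectivity}. (One small bookkeeping correction: the limit runs over the dominant coweights lying in the coset $\omega_{i_\gamma}+R_G$, not over all of $\Lambda_G^+$.)

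The genuine gap is in type $E$. You discharge the surjectivity onto ``the classical character formula (Corollary \ref{character formula} \ldots cf.\ \cite{Kac}).'' Inside this paper, Corollary \ref{character formula} is stated as a corollary of the FKS isomorphism (Theorem \ref{KFS-Isom}), and the paper's purpose, carried out in \S \ref{FKS}, is to \emph{re-prove} FKS from the very theorem you are proving; so quoting that corollary is circular within the paper's logic, and quoting Kac's independent proof instead defeats the role of this theorem as the geometric input from which the character identity and then FKS are to be deduced. More to the point, you have missed the idea that keeps the type $E$ case geometric: one does not need Theorem \ref{Main Theorem} for \emph{all} $\lambda$, only for a cofinal family of dominant coweights in the coset $\omega_{i_\gamma}+R_G$. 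Since every dominant coweight $\lambda$ satisfies $\lambda\leq n\theta$ for $n$ large enough, the Schubert varieties $\overline{\Gr}_G^{\omega_{i_\gamma}+n\theta}$ already exhaust $(\Gr_G)^\gamma$, and Theorem \ref{reduced main theorem } holds for each $\omega_{i_\gamma}+n\theta$ by combining Proposition \ref{Minuscule} (the minuscule coweight $\omega_{i_\gamma}$), Proposition \ref{higest coroot} (the highest coroot $\theta$), and Proposition \ref{factorization} (additivity under $\lambda+\mu$). Together with Proposition \ref{surjectivity} this yields the isomorphism on each component for type $E$ with no character-theoretic input. Your injectivity step, by contrast, is sound: the Fock summands $\pi_{\mu}$ have pairwise distinct central characters for the zero modes $\frakt\subset\hat{\frakt}$, so the sum of the nonzero maps out of them is injective; it is only the surjectivity that you failed to obtain geometrically.
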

\begin{proof} By Proposition \ref{Borel-Weil} and Theorem \ref{Fixed-point}, the proposition is equivalent to
prove that the natural morphism
\[\Gamma((\Gr_G)^\gamma,\calL_G)\longrightarrow\Gamma((\Gr_G)^\gamma,\calO_{((\Gr_G)^\gamma)^T}\otimes\calL_G)\]
is an isomorphism. If $G$ is of type $A$ or $D$, this directly
follows from Theorem \ref{Main Theorem} since
$(\Gr_G)^\gamma=\lim\limits_{\longrightarrow}\overline{\Gr}_G^\lambda$,
where limit is taken over $(\omega_{i_\gamma}+\Lambda_G^+)$. To
give a proof of the proposition also applicable to type $E$, we
make use of the following simple observation

\vspace{2mm}

\noindent\bf Claim. \rm For any $\lambda$ a dominant coweight,
there exists $n$ big enough such that $\lambda\leq n\theta$.

\vspace{2mm}

Therefore,
$(\Gr_G)^\gamma=\lim\limits_{\longrightarrow}\overline{\Gr}_G^\lambda$,
where the limit now is taken over $(\omega_{i_\gamma}+n\theta)$,
and the proposition follows from Proposition \ref{surjectivity},
Proposition \ref{factorization}, Proposition \ref{Minuscule} and
Proposition \ref{higest coroot}.
\end{proof}

\subsection{Lattice factorization algebras v.s. Affine Kac-Moody factorization
algebras}\label{ver}
\begin{s} Let $L$ be a lattice with a symmetric bilinear form
$(\cdot,\cdot):L\times L\to \Z$ such that $(\lambda,\lambda)>0$
for all $\lambda\in L\setminus\{0\}$. For simplicity, we will
always assume that $L$ is an even lattice, i.e.
$(\lambda,\lambda)\in 2\Z$ for any $\lambda\in L$. Set
$T=L\otimes_{\Z}\G_m$. This is a torus with Lie algebra
$\frakt=L\otimes\C$ and coweight lattice $L\subset\frakt$. We
still denote $(\cdot,\cdot)$ the bilinear form on $\frakt$
obtained by the extension of the one on $L$. Let $\hat{\frakt}$ be
the Heisenberg algebra corresponding to $(\frakt,(\cdot,\cdot))$.

Choose a 2-cocycle $\varepsilon:L\times L\to \Z/2$ such that
$\varepsilon(\lambda,\mu)+\varepsilon(\mu,\lambda)=(\lambda,\mu)
\mbox{ mod } 2$. Attached to this data, there is a canonical
symmetric central extension $\tilde{T}_K$ of $T_\K$ by $\G_m$,
with a canonical splitting $i:T_\calO\to\tilde{T}_K$ (cf.
\cite{Be} and \cite{Ga} \S 6.1.1), such that the commutator
pairing $T_\K\times T_\K\to \G_m$ is given by
\begin{equation}\label{Contou-Carrere}\{\lambda(f),\mu(g)\}=\{f,g\}^{-(\lambda,\mu)}\end{equation} Here we
regard $\lambda\in L$ as morphism $(\G_m)_\K\to T_\K$ and
$\{\cdot,\cdot\}:(\G_m)_\K\times(\G_m)_\K\to \G_m$ is the
Contou-Carr\`{e}re symbol. This central extension defines an
invertible sheaf $\calL_T$ over $\Gr_T=T_\K/T_\calO$. Observe that
in our convention, $\calL_T$ corresponds to $(R_Q)^{-1}$ as in
\cite{Ga} \S 6.1.1.

Recall that as topological spaces, $(\Gr_T)_{red}=\{s^\lambda\}$
for $\lambda\in L$. Denote $\delta_\lambda$ the right $D$-module
of delta-functions at $s^\lambda$, i.e.
$\delta_\lambda=(s^\lambda)_!\C$, where $s^\lambda$ is regarded as
the map $\spec\C\to \Gr_T$. Define
\[V_L=\Gamma(\Gr_T,(\oplus_{\lambda\in
L}\delta_\lambda)\otimes\calL_T^{-1})\cong\Gamma(\Gr_T,\calL_T)^*\]
Then it is clear that as $\hat{\frakt}$-modules
\[V_L\cong\bigoplus_{\lambda\in L}\pi_\lambda\]
\end{s}
\begin{s}
If we begin with the global curve $X$, then the connected
components of $\Gr_{T,X^n}$ are labelled by
$(\lambda_1,\ldots,\lambda_n)\in L^n$, and the reduced part of
each connected component is isomorphic to $X^n$. Let
$s^{\lambda_1,\cdots,\lambda_n}$ be the corresponding section
$X^n\to \Gr_{T,X^n}$ and $\delta_{\lambda_1,\ldots,\lambda_n}$ the
corresponding sheaf of delta-functions. i.e.
$\delta_{\lambda_1,\ldots,\lambda_n}=(s^{\lambda_1,\cdots,\lambda_n})_!\omega_{X^n}$.

There are canonical line bundles $\calL_{T,X^n}$ over
$\Gr_{T,X^n}$, which are obtained from $\calL_T$ by moving points.
They satisfy the factorization property since so does the
Contou-Carr\`{e}re symbol (cf. \cite{Be} 2.3). There is another
description of $\{\calL_{T,X^n}\}$ when $X$ is complete, similar
to the ones for a simple algebraic group as in \ref{line bundle}.
Namely, there is a canonical line bundle $\calL_T$ on $Bun_{T,X}$,
and $\calL_{T,X^n}$ are the pullbacks of $\calL_T$ by
$\pi_n:\Gr_{T,X^n}\to Bun_{T,X}$. We will simply denote them by
$\calL_T$ in the following. Let $p_n:\Gr_{T,X^n}\to X^n$ be the
natural projections. Then
\[\{\mathcal
G_n:=(p_n)_*(\oplus_{(\lambda_1,\ldots,\lambda_n)\in
L^n}\delta_{\lambda_1,\ldots,\lambda_n}\otimes\calL_T^{-1})\otimes
\omega_{X^n}^{-1}\cong((p_n)_*\calL_T)^*\}\] form a factorization
algebra (cf. \cite{Be} Proposition 3.3). The vertex algebra
structure on the fiber $\mathcal G_1\otimes\C_x\cong V_L$ is
called the lattice vertex algebra. The embedding
\[(p_n)_*(\delta_{0,\ldots,0}\otimes\calL_T^{-1})\otimes
\omega_{X^n}^{-1}\to \mathcal G_n\] identifies the Heisenberg
factorization algebra as a subalgebra of the lattice factorization
algebra.
\end{s}

\begin{s}Let $G$ be the simple simply-connected algebraic group
whose Lie algebra is $\frakg$. Recall the Beilinson-Drinfeld
Grassmannian $\Gr_{G,X^n}$ from \ref{BD Grass} and the invertible
sheaf $\calL_G$ from \ref{line bundle}. For any $n$, \emph{the}
trivial $G$-bundle gives a section $e_n:X^n\to \Gr_{G,X^n}$. In
other words, $e_2=s^{0,0}$ as defined in \ref{global orbits}. On
the other hand, one has the projection $p_n:\Gr_{G,X^n}\to X^n$.
Then the collection
\[\{\V_n:=(p_n)_*(\calL_G^{\otimes
(-k)}\otimes (e_n)_!\omega_{X^n})\otimes\omega_{X^n}^{-1}\}\] form
a factorization algebra. For any $x\in X$ a closed point,
\[\V_1\otimes \C_x\cong\Gamma(\Gr_G,\calL_G^{\otimes (-k)}\otimes
\mbox{IC}_0)\cong\bbV(k\Lambda)\] as $\hat{\frakg}$-modules. Here
$\mbox{IC}_\lambda$ is the irreducible $D$-module on $\Gr_G$ whose
support is $\overline{\Gr}_G^\lambda$, and
\[\bbV(k\Lambda)=\ind_{\frakg\hat{\otimes}\calO+\C K}^{\hat{\frakg}}\C\]
is the level $k$ vacuum module, on which
$\frakg\hat{\otimes}\calO$ acts through the trivial character and
$K$ acts by multiplication by $k$. The factorization structure
endows $\V_1\otimes\C_x\cong\bbV(k\Lambda)$ with a vertex algebra
structure, which is isomorphic to the standard affine Kac-Moody
vertex algebra (cf. \cite{FB} Proposition 20.4.3 and \cite{Ga}
Theorem 5.3.1).
\end{s}

\begin{s}
Let $\{\F_n:=((p_n)_*\calL_G^{\otimes k})^*\}$. Then each $\F_n$
is a quasi-coherent sheaf on $X^n$. (However,
$(p_n)_*\calL_G^{\otimes k}$ is not quasi-coherent.) It is clear
that for any $x\in X$ closed point,
$\F_1\otimes\C_x\cong(\Gamma(\Gr_G,\calL_G^{\otimes k}))^*\cong
L(k\Lambda)$ as $\hat{\frakg}$-modules. Moreover, the
factorization property of $\calL_G$ implies that \end{s}
\begin{lem}$\{\F_n\}$ form a factorization algebra. Furthermore, there is a
natural morphism of factorization algebras $\{\V_n\}\to\{\F_n\}$
commuting with the fiberwise $\hat{\frakg}$-action.
\end{lem}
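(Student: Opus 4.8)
The plan is to deduce both assertions from the factorization properties of the pair $(\Gr_{G,X^n},\calL_G)$ recorded in \ref{BD Grass} and \ref{line bundle}, together with the universal property of the vacuum factorization algebra $\{\V_n\}$. First I would establish the factorization structure on $\{\F_n\}$. Given a surjection of finite index sets, over the open locus $U\subset X^n$ where the corresponding clusters of points are pairwise disjoint, \ref{BD Grass} splits $\Gr_{G,X^n}$ as the external product of the lower Beilinson-Drinfeld Grassmannians, and by \ref{line bundle} the sheaf $\calL_G$ restricts over $U$ to the external tensor product of the lower $\calL_G$'s (while over the full diagonal it restricts to $\calL_G$ on the smaller power). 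Applying $(p_n)_*$ and using that $p_n$ is ind-proper (as $G$ is reductive, so each $\overline{\Gr}_{G,X^n}^{\lambda}$ is projective over $X^n$), the Künneth and proper base change isomorphisms on each term of the Schubert exhaustion yield factorization isomorphisms for the pro-coherent sheaf $(p_n)_*\calL_G^{\otimes k}=\lim\limits_{\longleftarrow}(p_n)_*(\calL_G^{\otimes k}|_{\overline{\Gr}_{G,X^n}^{\lambda}})$. Dualizing termwise converts these into isomorphisms of the ind-coherent, hence quasi-coherent, sheaves $\F_n$; the unit is supplied by the base-point section $e_n$, and commutativity and associativity are inherited from those of $\calL_G$ and of $\Gr_{G,X^n}$. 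This exhibits $\{\F_n\}$ as a factorization algebra.

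Next I would produce the morphism $\{\V_n\}\to\{\F_n\}$. The cleanest route is the universal property: $\{\V_n\}$ is the vacuum (chiral enveloping) factorization algebra of the affine Kac-Moody Lie-$*$ algebra at level $k$, its fiber being the affine Kac-Moody vertex algebra $\bbV(k\Lambda)$ (cf.\ \cite{FB} Chapter 20 and \cite{Ga} Theorem 5.3.1). Hence any factorization algebra carrying a compatible fiberwise $\hat{\frakg}$-action at level $k$ together with a unit receives a unique morphism from $\{\V_n\}$. Each $\F_1\otimes\C_x\cong L(k\Lambda)$ carries exactly such an action, coming from the $\hat{\frakg}$-module structure on $\Gamma(\Gr_G,\calL_G^{\otimes k})$ discussed in \S\ref{BW}; this action is part of the factorization structure constructed in the previous step, and the unit is the vacuum vector of $L(k\Lambda)$. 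Therefore there is a canonical morphism $\{\V_n\}\to\{\F_n\}$ commuting with the fiberwise $\hat{\frakg}$-action, which on the fiber over $x$ is the tautological surjection of vertex algebras $\bbV(k\Lambda)\twoheadrightarrow L(k\Lambda)$ onto the irreducible quotient recalled in the introduction. Equivalently, one may invoke the equivalence of \cite{FB} Chapter 20 between translation-equivariant $\Z_{\geq 0}$-graded vertex algebras and factorization algebras to globalize the vertex-algebra surjection directly.

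The main obstacle I anticipate is purely technical and concentrated in the first step: since $(p_n)_*\calL_G^{\otimes k}$ is not quasi-coherent (it is a genuine pro-object, as the surrounding text notes), the factorization isomorphisms must be set up at the level of the inverse system indexed by the Schubert exhaustion and only then dualized, and one must check that these dual isomorphisms are mutually compatible, satisfy the cocycle/associativity constraints, and really land in quasi-coherent sheaves. Once this bookkeeping is carried out, the existence and $\hat{\frakg}$-equivariance of $\{\V_n\}\to\{\F_n\}$ follow formally from the universal property of the vacuum algebra, so the second assertion introduces no further difficulty.
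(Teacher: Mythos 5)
Your first assertion (the factorization structure on $\{\F_n\}$) is argued essentially as the paper does: the structure maps come from the factorization property of $\calL_G$ recorded in \ref{line bundle}, the unit comes from the section $e_n$, and your bookkeeping with the pro-object $(p_n)_*\calL_G^{\otimes k}$ versus its quasi-coherent dual is the right way to make this precise. The gap is in the second assertion. Both of your routes to the morphism $\{\V_n\}\to\{\F_n\}$ presuppose exactly the compatibility that has to be proved. To invoke the universal property of the vacuum (chiral enveloping) algebra you need more than a fiberwise $\hat{\frakg}$-action on $\F_1\otimes\C_x\cong L(k\Lambda)$: you need a morphism of Lie-$*$ algebras from the level-$k$ Kac-Moody Lie-$*$ algebra into the chiral algebra underlying $\{\F_n\}$, i.e.\ you need to know that the $\hat{\frakg}$-action coming from Borel-Weil is realized by fields of the vertex algebra structure that the factorization structure of $\{\F_n\}$ induces on its fiber. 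Your sentence ``this action is part of the factorization structure constructed in the previous step'' asserts this compatibility, but nothing in your first step provides it: the factorization structure was built purely from the geometry of $\calL_G$ over $\Gr_{G,X^n}$, while the $\hat{\frakg}$-action on $\Gamma(\Gr_G,\calL_G^{\otimes k})^*$ is a separate, representation-theoretic structure. The same objection makes your alternative route circular: globalizing the vertex algebra surjection $\bbV(k\Lambda)\twoheadrightarrow L(k\Lambda)$ through the vertex/factorization equivalence requires knowing that $\{\F_n\}$ corresponds to $L(k\Lambda)$ with its \emph{standard} vertex algebra structure --- but in the paper this identification is exactly Remark (i) placed \emph{after} the lemma and deduced from it, so it is not available as an input.

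The paper closes this gap by constructing the morphism explicitly, with no appeal to universal properties. Since $\F_n=((p_n)_*\calL_G^{\otimes k})^*$, a map $\V_n\to\F_n$ is the same as a pairing $\V_n\otimes(p_n)_*\calL_G^{\otimes k}\to\calO_{X^n}$. Multiplying sections of $\calL_G^{\otimes k}$ against the $\calL_G^{\otimes(-k)}$-twisted delta functions gives a map to $(p_n)_*((e_n)_!\omega_{X^n})\otimes\omega_{X^n}^{-1}$, so the essential point is a trace map $(p_n)_*((e_n)_!\omega_{X^n})\to\omega_{X^n}$. The paper produces this from a general natural transformation $Rf_*\F\to f_!\F$, valid for any smooth morphism $f$ and any right $D$-module $\F$, proved via the Koszul resolution $D_M\otimes\bigwedge^{-\cdot}\Theta_{M/N}$ of $D_{M\to N}$; applying it to $f=p_n$ and using $p_n e_n=\mathrm{id}$ gives $(p_n)_*((e_n)_!\omega_{X^n})\to(p_n)_!((e_n)_!\omega_{X^n})\cong\omega_{X^n}$. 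Because the resulting morphism $\{\V_n\}\to\{\F_n\}$ is explicit, its compatibility with factorization and with the fiberwise $\hat{\frakg}$-action can be checked directly, and the identification of the fiber vertex algebra structure on $L(k\Lambda)$ then comes out as a corollary rather than being assumed. This explicit $D$-module construction is the ingredient your proposal is missing.
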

\begin{proof} It is clear that the factorization structure of
$\{\F_n\}$ comes from the factorization property of $\calL_G$ as
indicated in \ref{line bundle}. The unit is given by
$(p_1)_*\calL_G^{\otimes k}\to(p_1)_*(\calL_G^{\otimes k}\otimes
\calO_{\overline{\Gr}_{G,X}^0})\cong\calO_X$. Here,
$\overline{\Gr}_{G,X}^\lambda$ is the global analogue of
$\overline{\Gr}_G^\lambda$ (see \ref{global orbits}).

To construct a morphism $\{\V_n\}\to\{\F_n\}$ between
factorization algebras which commutes with the
$\hat{\frakg}$-action, we first construct an $\calO_{X^n}$-linear
map
\[(p_n)_*((e_n)_!\omega_{X^n})\to\omega_{X^n}\]
We need

\vspace{2mm}

\noindent\bf Claim. \rm If $f:M\to N$ is a smooth morphism of
smooth algebraic varieties. Then for any $\F$ a right $D$-module
on $M$, there is a natural morphism $Rf_*\F\to f_!\F$ in the
derived category of $\calO_N$-modules, where $f_!$ is the
push-forward of $D$-modules.

\vspace{2mm}

For any smooth algebraic variety $M$, $D_M$ denotes the sheaf of
differential operators on $M$. For $f:M\to N$ a morphism between
smooth algebraic varieties, $D_{M\to N}$ denotes the $(D_M\times
f^{-1}D_N)$-bimoulde on $M$, whose underline $\calO_M$-module
structure is isomorphic to $f^*D_N$. For $\F$ a right $D$-module
on $M$, the push-forward $f_!\F$ to $N$ is
\[f_!\F=Rf_*(\F\otimes^L_{D_M}D_{M\to N})\]

If $f:M\to N$ is a smooth morphism, $D_{M\to N}$ has a resolution
as left $D_M$-modules by
$D_M\otimes\bigwedge^{-\cdot}\Theta_{M/N}$, where $\Theta_{M/N}$
is the relative tangent sheaf. Therefore, one obtains the natural
morphism
$\F\to\F\otimes\bigwedge^{-\cdot}\Theta_{M/N}\cong\F\otimes^L_{D_M}
D_{M\to N}$. The claim is proved.

Now, since $p_ne_n=id$, by the claim, one obtains
\[(p_n)_*((e_n)_!\omega_{X^n})\to(p_n)_!((e_n)_!\omega_{X^n})\cong \omega_{X^n}\]
Together with the natural $\calO_{X^n}$-module morphism
\[(p_n)_*(\calL_G^{\otimes (-k)}\otimes (e_n)_!\omega_{X^n})\otimes\omega_{X^n}^{-1}\otimes(p_n)_*\calL_G^{\otimes k}\to(p_n)_*((e_n)_!\omega_{X^n})\otimes\omega_{X^n}^{-1}\]
one obtains the map
\[(p_n)_*(\calL_G^{\otimes (-k)}\otimes (e_n)_!\omega_{X^n})\otimes\omega_{X^n}^{-1}\otimes(p_n)_*\calL_G^{\otimes
k}\to\calO_{X^n}\] In this way, one constructs a map
$\{\V_n\}\to\{\F_n\}$. It is easy to check that it satisfies all
the required properties.
\end{proof}

\begin{rmk} (i) The factorization structure of $\{\F_n\}$ endows
$L(k\Lambda)\cong\F_1\otimes\C_x$ with a vertex algebra structure.
By the lemma above, it is realized as a quotient of the affine
Kac-Moody vertex algebra $\bbV(k\Lambda)$, and therefore,
coincides with the usual vertex algebra structure on the
integrable representation of level $k$.

(ii) Since $L(k\Lambda+\check{\nu})$ is a smooth
$\hat{\frakg}$-module, it is a module over the vertex algebra
$\bbV(k\Lambda)$ (cf. \cite{FB} Theorem 5.1.6). However, it is
known that the action of $\bbV(k\Lambda)$ on
$L(k\Lambda+\check{\nu})$ factors through $L(k\Lambda)$.
Therefore, $L(k\Lambda+\check{\nu})$ is a module over
$L(k\Lambda)$.
\end{rmk}

\subsection{The Frenkel-Kac-Segal isomorphism}\label{FKS}
Let $G$ be a simple, simply-connected algebraic group of $A,D,E$
type, with Lie algebra $\frakg$. Then the coroot lattice $R_G$
(=coweight lattice in this case) together with the normalized
invariant form on $\frakg$ is an even lattice. The torus
$T=R_G\otimes_{\Z}\G_m$ is the maximal torus of $G$. We have

\begin{lem} (i). The restriction of $\calL_G$ to $\Gr_T\subset \Gr_G$ is just
$\calL_T$ associated to the invariant form, i.e.
$\calL_T\cong\calL_G\otimes\calO_{\Gr_T}$.

(ii). The restrictions of $\calL_G$ on $\Gr_{G,X^n}$ to
$\Gr_{T,X^n}\subset \Gr_{G,X^n}$ are canonically isomorphic to
$\calL_T$ on $\Gr_{T,X^n}$. Furthermore, such restrictions are
compatible with all the factorizations.
\end{lem}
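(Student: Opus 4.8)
The plan is to reduce both statements to a comparison of the two $\G_m$-central extensions of $T_\K$ that $\calL_G|_{\Gr_T}$ and $\calL_T$ give rise to, and to observe that this comparison is already visible at the level of Lie algebras. For part (i), recall that the ample generator $\calL_G$ is $G_\K$-equivariant for a $\G_m$-central extension $\widehat{G_\K}$ of $G_\K$, canonically split over $G_\calO$ (this is how $\calL_G$ descends to $\Gr_G=G_\K/G_\calO$). Restricting this extension along the maximal torus $T_\K\hookrightarrow G_\K$ produces a central extension $\widehat{T_\K}$, split over $T_\calO$, whose associated line bundle on $\Gr_T=T_\K/T_\calO$ is, via the identification $\Gr_T\subset\Gr_G$ of Corollary \ref{T-fix point}, exactly $\calL_G\otimes\calO_{\Gr_T}$. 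By \cite{Be} and \cite{Ga} \S 6.1.1 such an extension, together with its splitting over $T_\calO$, is determined up to isomorphism of the resulting line bundle by its commutator pairing $\{\cdot,\cdot\}\colon T_\K\times T_\K\to\G_m$, which by bilinearity is governed by a symmetric form $b$ on $L=X_*(T)$ through $\{\lambda(f),\mu(g)\}=\{f,g\}^{-b(\lambda,\mu)}$, where $\{\cdot,\cdot\}$ is the Contou-Carr\`ere symbol. It therefore suffices to identify $b$ with the normalized invariant form $(\cdot,\cdot)$.

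This identification is the heart of the matter, and it is furnished by \ref{Heisenberg}: differentiating $\widehat{G_\K}$ gives the affine Kac--Moody cocycle on $\frakg\hat{\otimes}\K$, whose restriction to $\frakt\hat{\otimes}\K$ is $(A\otimes f,B\otimes g)\mapsto(A,B)\res(g\,df)$, i.e.\ precisely the Heisenberg cocycle attached to $(\frakt,(\cdot,\cdot))$. Integrating this cocycle, a residue computation (cf.\ \cite{Ga} \S 6.1.1) yields $\{\lambda(f),\mu(g)\}=\{f,g\}^{-(\lambda,\mu)}$, which is exactly the defining commutator (\ref{Contou-Carrere}) of $\calL_T$; here the fact that $\calL_G$ is the level-one generator is what pins the form to the normalized one rather than to a multiple of it, and the evenness of $R_G$ guarantees that the resulting cocycle class agrees with the one cut out by the chosen $\varepsilon$. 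Hence $\calL_G|_{\Gr_T}$ and $\calL_T$ are classified by the same symmetric form and are isomorphic as line bundles on $\Gr_T$, proving (i).

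For part (ii) I would globalize and invoke the rigidity of factorization line bundles. First rigidify both sides along the unit section $e_n=s^{0,\dots,0}\colon X^n\to\Gr_{T,X^n}$, where each of $\calL_G$ and $\calL_T$ carries a canonical trivialization; normalizing isomorphisms to preserve these trivializations makes them canonical. By \ref{line bundle} the collection $\{\calL_G\}$ satisfies the factorization property, hence so does its restriction to $\{\Gr_{T,X^n}\}$, while $\{\calL_T\}$ factorizes because the Contou-Carr\`ere symbol does (cf.\ \cite{Be} 2.3 and the construction preceding (\ref{Contou-Carrere})). Both are thus rigidified factorization line bundles on $\{\Gr_{T,X^n}\}$ whose restrictions to $\Gr_{T,X}$ agree by (i), so since such a factorization line bundle is determined up to unique rigidified isomorphism by its restriction to $\Gr_{T,X}$ together with the factorization data, the isomorphism of (i) extends uniquely to every $n$ and is compatible with all factorizations. (When $X$ is complete one sees the same thing more concretely: both descend from $Bun_{T,X}$, and the extension-of-structure-group map $Bun_{T,X}\to Bun_{G,X}$ pulls the generator of $\Pic(Bun_{G,X})$ back to the form-$(\cdot,\cdot)$ bundle by the level computation of part (i).)

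The main obstacle is the commutator-pairing computation of part (i) together with the attendant bookkeeping of signs and normalizations --- matching the minus sign in (\ref{Contou-Carrere}) and the $(R_Q)^{-1}$ convention recorded after it, and verifying that the cocycle class (not merely the underlying form) coincides with the one determined by $\varepsilon$. Once (i) is established at this level of precision, (ii) is essentially formal, being the rigidity of factorization line bundles applied to the fiberwise isomorphism.
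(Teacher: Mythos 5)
Your framing of part (i) --- both sheaves arise from $\G_m$-central extensions of $T_\K$ split over $T_\calO$, and such data are compared through their commutator pairings as in \cite{Be} and \cite{Ga} \S 6.1.1 --- is the same as the paper's. But the step where you actually pin the pairing down, namely that ``integrating'' the Heisenberg Lie-algebra cocycle via a residue computation yields $\{\lambda(f),\mu(g)\}=\{f,g\}^{-(\lambda,\mu)}$, is a genuine gap, and it sits exactly at the heart of the matter. The group $T_\K$ is not connected: $\pi_0(T_\K)=X_*(T)$. The Lie-algebra cocycle determines the commutator pairing only on the neutral component (a character of $T^0_\K$ with vanishing differential is trivial, and one bootstraps from there); it says nothing about the pairing $\{t^\lambda,\mu(g)\}$ between a non-neutral component and the rest of the group, which is precisely what formula (\ref{Contou-Carrere}) asserts on such pairs and precisely what carries the Frenkel--Kac content (these are the vertex-operator commutation relations). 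Central extensions of $T_\K$ split over $T_\calO$ are genuinely not determined by their Lie-algebra cocycles: for $T=\G_m$, writing $f=t^{a}u$ and $g=t^{b}v$ with $u,v\in\calO^*$, the bimultiplicative cocycle $c(f,g)=u(0)^{b}$ defines an extension of $\K^*$ split over $\calO^*$ whose Lie-algebra cocycle vanishes but whose commutator pairing $u(0)^{b}v(0)^{-a}$ is nontrivial (and alternating, so it is an honest commutator pairing). Hence no differentiation/integration argument can produce (\ref{Contou-Carrere}); a group-level computation involving the $t^\lambda$'s is unavoidable, and your proposal assumes it rather than proves it.

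The paper supplies exactly this missing input: it identifies $\calL_G$, and hence $\calL_G\otimes\calO_{\Gr_T}$, with the inverse of the \emph{determinantal} central extension, i.e.\ the pullback along the projective action on $L(\Lambda)$ of $1\to\G_m\to \mathbf{GL}(L(\Lambda))\to\mathbf{PGL}(L(\Lambda))\to 1$, and then cites Kapranov--Vasserot \cite{KV} for the computation that the commutator pairing of this determinantal extension is given by (\ref{Contou-Carrere}). Some such computation (for instance a reduction to $SL_2$ plus the \cite{KV} formula) is the one thing your argument must add. Your part (ii) is essentially fine and close to the paper: the paper passes through the pullback of the canonical bundle on $Bun_{G,X}$ to $Bun_{T,X}$, which is exactly your parenthetical remark, while your rigidity-of-factorization-line-bundles argument is a reasonable variant; but either way (ii) is contingent on (i), so the gap above is the one to repair.
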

\begin{proof} We only give a rough explanation since we did not
give the precise definitions of $\calL_T$ and $\calL_G$. Indeed,
$\calL_G$ on $\Gr_G$ (resp. $\calL_G\otimes\calO_{\Gr_T}$ on
$\Gr_T$) corresponds to the inverse of the $\G_m$-central
extension of $G_\K$ (resp. $T_\K$) obtained by pullback the
central extension
\[1\to\G_m\to \mathbf{GL}(L_1(\Lambda))\to \mathbf{PGL}(L_1(\Lambda))\to 1\]
We call it the determinantal central extension. It is proved in
\cite{KV} (for the case $G=SL_2$) that the commutator pairing of
this central extension is given by formula (\ref{Contou-Carrere}).
This proves (i).

By (i), the pullback of the canonical line bundle $\calL_G$ on
$Bun_{G,X}$ (see \ref{line bundle}) to $Bun_{T,X}$  is isomorphic
to the canonical line bundle $\calL_T$ on $Bun_{T,X}$ as
introduced in 3.2.2. Then (ii) is clear.
\end{proof}

Putting things together, the geometrical form of the FKS
isomorphism is

\begin{thm} The dual of the natural morphism
$(\pi_n)_*\calL_G\to(\pi_n)_*(\calL_G\otimes\calO_{\Gr_{T,X^n}})$
gives an isomorphism of factorization algebras $\{\mathcal
G_n\}\to\{\F_n\}$.
\end{thm}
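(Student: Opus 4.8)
The strategy is to realize $\{\mathcal G_n\}\to\{\F_n\}$ as a morphism of factorization algebras and then to prove it is an isomorphism by reducing, through the factorization structure, to the single fibre at one point, where it becomes exactly the main theorem. First I would globalize the fixed-point statement: the argument of Theorem \ref{Fixed-point} and Corollary \ref{T-fix point} applies verbatim over $X^n$ and identifies $\Gr_{T,X^n}$ with the $T$-fixed subscheme of $\Gr_{G,X^n}$, so that the restriction $\calL_G\to\calL_G\otimes\calO_{\Gr_{T,X^n}}$ is genuinely restriction of sections to the fixed locus. By the preceding lemma the identification $\calL_G|_{\Gr_{T,X^n}}\cong\calL_T$ is compatible with all the factorizations; hence, pushing forward along $p_n:\Gr_{G,X^n}\to X^n$, one has $(p_n)_*(\calL_G\otimes\calO_{\Gr_{T,X^n}})\cong(p_n)_*\calL_T$, and dualizing the restriction map produces a collection $\{\phi_n:\mathcal G_n\to\F_n\}$.

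I would then check that $\{\phi_n\}$ is a morphism of factorization algebras. Restriction of sections to a closed subscheme is functorial and commutes with external tensor products, and since both $\calL_G$ and $\calL_T$ satisfy the factorization property with identifications matched over $\Gr_{T,X^n}$ (again the preceding lemma), the diagrams comparing $\phi_n$ with $\phi_{n_1}\boxtimes\phi_{n_2}$ over the locus where the two groups of points are disjoint commute. The unit is preserved because restriction carries the canonical trivialization of $\calL_G$ along $\overline{\Gr}_{G,X}^0$ to that of $\calL_T$. The same computation shows $\{\phi_n\}$ commutes with the fibrewise $\hat{\frakt}$-action, as the Heisenberg subalgebra was matched in \S\ref{Hei}.

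It remains to see that each $\phi_n$ is an isomorphism. A morphism of factorization algebras is an isomorphism as soon as it induces an isomorphism of the underlying vertex algebras, i.e. of the fibres $\mathcal G_1\otimes\C_x\to\F_1\otimes\C_x$ at one point $x\in X$, because each $\mathcal G_n$ and $\F_n$ is reconstructed from its $n=1$ datum through the factorization structure. Concretely, over the open locus of pairwise distinct points $\phi_n$ equals the external power $\phi_1^{\boxtimes n}$, hence is an isomorphism there, while flatness of the two sides over $X^n$ together with cohomology and base change on the graded pieces propagate this across the diagonals. Finally, $\phi_1\otimes\C_x$ is by construction dual to the restriction $\Gamma(\Gr_G,\calL_G)\to\Gamma(\Gr_G,\calO_{\Gr_T}\otimes\calL_G)=\Gamma(\Gr_T,\calL_T)$; since $G$ is simply-connected (so $\pi_1(G)=0$ and only $\gamma=0$ occurs), this restriction is the isomorphism $L(\Lambda)\cong V_{R_G}$ provided by Theorem \ref{Main Theorem} and its Heisenberg reformulation in \S\ref{Hei}, whence $\{\phi_n\}$ is an isomorphism.

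The step I expect to be the real obstacle is precisely this last upgrade from one fibre to the whole family: the main theorem only controls $\phi_1$ at a single point, and while the factorization structure trivially handles configurations of distinct points, extending the isomorphism across the diagonals of $X^n$ — where $\Gr_{G,X^n}$ collapses to a single copy of $\Gr_{G,x}$ — requires the factorization-algebra/vertex-algebra dictionary (or, equivalently, the flatness of $(p_n)_*\calL_G$ and $(p_n)_*\calL_T$ with base change on the finite-dimensional graded pieces) rather than any further geometric input.
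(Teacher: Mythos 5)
Your proposal is correct and follows essentially the same route as the paper: the preceding lemma supplies the factorization-algebra morphism, and the isomorphism is then checked fiberwise, where by factorization (over distinct points) and base change (over diagonals) it reduces to the single-point statement, i.e.\ the Heisenberg-module isomorphism of \S\ref{Hei} coming from Theorem \ref{Main Theorem}. Your extra steps (globalizing the fixed-point theorem to $X^n$ and spelling out the propagation across diagonals) are elaborations of, not departures from, the paper's argument.
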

\begin{proof} By the lemma above, this is a factorization algebra
morphism. It remains to prove that this is an isomorphism, which
can be checked fiberwise, and is proved in Theorem 3.1.2.
\end{proof}

\begin{s} Finally, let us explain why Theorem \ref{Vertex operator
modules} is true. Pick up a point $x\in X$. Then the category of
$\{\mathcal G_n\}$-modules supported at $x$ is semisimple, with
all simple objects labelled by $\gamma\in L'/L$, where
$L'=\{\lambda\in L\otimes_{\Z}\mathbb Q|(\lambda,\mu)\in\Z,
\forall\mu\in L\}$ is the dual lattice (cf. \cite{Do} and
\cite{Be} Lemma 1.9 and Proposition 3.8). For each $\gamma\in
L'/L$, the corresponding module is
\[V_L^\gamma=\bigoplus_{\lambda\in\gamma+L}\pi_\lambda\]
Now let $L=R_G$ be the coroot lattice of $G$. Since
$V_{R_G}^\gamma$ is a simple $\{\mathcal G_n\}$-module, it is a
simple $\{\F_n\}$-module, and therefore is a simple
$\hat{\frakg}$-module. According to Theorem 3.1.2,
$V_{R_G}^\gamma$ is isomorphic to
$L(\Lambda+\iota\omega_{i_\gamma})$ as $\hat{\frakt}$-modules.
Thus, they must be isomorphic as $\hat{\frakg}$-modules. Or
equivalently, they are isomorphic as modules over $\{\F_n\}$.
\end{s}

\end{document}